%% file: main.tex
\documentclass[12pt]{amsart}
\usepackage{amssymb}
\usepackage{amsmath}
\usepackage{amsfonts}
\usepackage{tikz-cd}
\usepackage{hyperref}
\hypersetup{
  hidelinks,
  pdftitle={Cellular A1-homology of wonderful models of subspace arrangements},
  pdfauthor={Haoyang Liu and Keyao Peng}
}
\providecommand{\llbracket}{\mathopen{[\mkern-3mu[}}
\providecommand{\rrbracket}{\mathclose{]\mkern-3mu]}}
\newtheorem{theorem}{Theorem}[section]
\newtheorem{lemma}[theorem]{Lemma}
\newtheorem{proposition}[theorem]{Proposition}
\newtheorem{corollary}[theorem]{Corollary}
\newtheorem{definition}[theorem]{Definition}

\theoremstyle{definition}
\newtheorem{remark}[theorem]{Remark}
\newtheorem{example}[theorem]{Example}

\numberwithin{equation}{section}
\allowdisplaybreaks[1]

\newcommand{\A}{\mathbb{A}}
\newcommand{\Z}{\mathbb{Z}}
\newcommand{\R}{\mathbb{R}}
\newcommand{\C}{\mathbb{C}}
\newcommand{\Q}{\mathbb{Q}}

\newcommand{\G}{\mathcal{G}}
\newcommand{\PP}{\mathbb{P}}

\newcommand{\F}{\mathbb{F}}

\newcommand{\KMW}{\mathrm{K}^\mathrm{MW}}

\newcommand{\afnz}[1]{\mathbb{A}^{#1}\setminus \{0\}}

\newcommand{\bZ}{\mathbb{Z}}

\newcommand{\Gm}{\mathbb{G}_m}

\newcommand{\llBra}[1]{\llbracket #1 \rrbracket}

\newcommand{\xr}[1]{\xrightarrow{#1}}
\newcommand{\Build}{\mathbf{Build}}
\newcommand{\Vect}{\mathbf{Vect}}
\newcommand{\Dor}{\mathrm{Dor}}

\newcommand{\cF}{\mathcal{F}}

\newcommand{\cN}{\mathcal{N}}
\newcommand{\cU}{\mathcal{U}}

\newcommand{\bS}{\mathbb{S}}
\newcommand{\AZ}{\mathbb{A}\mathcal{Z}}

\title[Cellular $\A^1$-homology of wonderful models]
{Cellular $\A^1$-homology of wonderful models of subspace arrangements}
\author{Haoyang Liu}
\address{Department of Mathematics, University of California, Santa Barbara,
CA 93106, USA}
\email{haoyangliu@ucsb.edu}
\author{Keyao Peng}
\address{Universit\'e Bourgogne Europe, Dijon, France}
\email{keyao.peng@ube.fr}

\begin{document}
\begin{abstract}
Let $k$ be a perfect field of characteristic different from $2$.  We
construct a chain-level Milnor--Witt refinement of Rains' forest
complex for De Concini--Procesi wonderful models.  For a building set
$\G$, we identify the cellular $\A^1$-chain complex of $\PP(\G)$ with
an $\eta$-twisted nested-set complex carrying derived orientation data.
The key input is a motivic blow-up calculation: a smooth center of
codimension $c$ has connecting class $(c-1)_\epsilon\eta$, which is
zero for $c$ odd and $\eta$ for $c$ even.  This turns Rains' parity into
a universal Milnor--Witt attaching class and describes both the image
of multiplication by $\eta$ and the $\eta$-localized cellular homology
by interval cohomology of a $2$-divisible subposet.

For the braid arrangement, $2$-divisibility becomes the odd-block
condition on partitions.  The cellular complex of the moduli space of
stable genus-zero curves then decomposes noncanonically into free
Milnor--Witt summands and two-term $\eta$-cones.  This determines all
untwisted Milnor--Witt cohomology groups, proves the trivial-line-bundle
case of Hennig's conjectural three-part decomposition, and yields the
additive Chow--Witt groups.  Over the real numbers, normalized signature
identifies the untwisted Chow--Witt ring with the fiber product of the
ordinary Chow ring and the integral cohomology ring of the real locus
over its mod-$2$ cohomology ring.  Even after real realization, our
chain-level description strengthens Rains' calculation by retaining the
$2$-primary attaching maps.  As a secondary extension, we construct the
line-bundle-twisted derived-orientation complex and its coefficientwise
comparison.  This verifies Hennig's decomposition for the
root-orientation twist in every arity and for every line-bundle twist on
$\overline{\mathcal M}_{0,N}$ when $N\leq5$, and reduces the general
twisted case to integral cohomology of the real locus with rank-one local
coefficients.
\end{abstract}
\maketitle
\tableofcontents

\input{intro}
\input{combinatorial}
\input{blow-up}
\input{cellular}
\input{example}

\bibliographystyle{plain}
\bibliography{references}
\end{document}

%% file: intro.tex
\section{Introduction}

Wonderful compactifications of subspace arrangement complements were
introduced by De Concini and Procesi as smooth projective models whose
boundary strata are controlled by nested sets in a building set
\cite{CP}.  They include several familiar spaces: for example, the
braid arrangement yields the moduli space \(\overline{\mathcal M}_{0,n+1}\).
The topology of these models is remarkably combinatorial.  In the real
case, Rains computed the homology of the real locus by using the
nested-set stratification and showed that the answer is governed by a
parity condition on the dimensions of the nodes of the forest
\cite{Rains_2010}.

The purpose of this paper is to lift this picture to cellular
\(\A^1\)-homology in the sense of Morel and Sawant
\cite{MOREL2023109346}.  More precisely, for a building set \(\G\) in
\(V^*\) over a field \(k\), we compute the cellular \(\A^1\)-chain
complex of the De Concini--Procesi model \(\PP(\G)\) in terms of a
purely combinatorial nested-set complex with Milnor--Witt coefficients.
The resulting answer is a motivic refinement of Rains' real
calculation: the classical parity phenomenon is replaced by the
Milnor--Witt attaching class coming from a blow-up, and the surviving
part after multiplication by \(\eta\) is controlled by the
\(2\)-divisible part of the lattice of sums of elements of \(\G\).

The strategy is parallel to the toric calculation of
\cite{liu2025cellularmathbba1homologysmoothtoric}.  There a smooth fan
is encoded by its simplicial complex \(K\) and characteristic map
\(\lambda\), and the cellular differential is recovered from a
moment-angle quotient together with the orientation changes determined
by \(\lambda\).  Here the nested-set complex and its derived
orientation coefficients play the corresponding roles.  The
permutohedral toric variety is the type-\(A\) overlap of the two
frameworks; Section~2 explains both this comparison and the distinction
from the braid-arrangement model used for
\(\overline{\mathcal M}_{0,n+1}\).

Our first ingredient is a functorial reformulation of wonderful models.
We regard a building set as a diagram of quotient vector spaces and
realize \(\PP(\G)\) as the space of sections of the projective
geometric context.  This point of view makes the projection maps,
operadic boundary maps, and the idempotent decomposition indexed by
\(\mathcal C_{\G}\) transparent.  It also gives the bookkeeping needed
to compare blow-ups on the geometric side with exact sequences of
nested-set complexes on the combinatorial side.

The second ingredient is the motivic blow-up calculation.  If
\(\widetilde X=\operatorname{Bl}_Z X\) is the blow-up of a smooth
scheme along a smooth center of codimension \(c\), then the connecting
map on the normal slice is multiplication by
\[
  (c-1)_\epsilon\eta
  =
  \left(\sum_{j=0}^{c-2}\langle-1\rangle^j\right)\eta
  =
  \chi_2(c-1)\eta
  \in\KMW_{-1}(k).
\]
Thus the attaching map is zero for \(c\) odd and is \(\eta\) for \(c\)
even.  This is the Milnor--Witt analogue of the parity appearing in
Rains' calculation, and it is the reason the final complex is an
\(\eta\)-twist of a derived orientation complex.

The main result identifies the cellular chain complex with that
nested-set model.  Let \(\Dor\) denote the derived orientation
coefficient complex and let \(C\mathcal N^+(\G,\Dor)\{\eta\}\) be the
\(\eta\)-twisted, strictly positive nested-set complex introduced
below.  Then there are quasi-isomorphisms
\[
  C\mathcal N^+(\G,\Dor)\{\eta\}
  \xrightarrow{\ \simeq\ }
  \widetilde C_*^{\mathrm{cell}}(\PP(\G)),
  \qquad
  \mathbf Z\oplus C\mathcal N^+(\G,\Dor)\{\eta\}
  \xrightarrow{\ \simeq\ }
  C_*^{\mathrm{cell}}(\PP(\G)).
\]
Equivalently, the \(A\)-graded summand is computed by the nested sets
with root \(A\).  After discarding \(\eta\)-torsion, this gives the
explicit formula
\[
  \eta\mathbf H_i^{\mathrm{cell}}(\PP(\G))
  \cong
  \bigoplus_{A\in\mathcal C_{\G}^{(2)}}
  H^{\dim A-i}
  \bigl([\hat0,A]_{\mathcal C_{\G}^{(2)}};\Z\bigr)
  \otimes\mathbf I^i,
  \qquad i>0,
\]
and after inverting \(\eta\),
\[
  \mathbf H_i^{\mathrm{cell}}(\PP(\G))[\eta^{-1}]
  \cong
  \bigoplus_{A\in\mathcal C_{\G}^{(2)}}
  H^{\dim A-i}
  \bigl([\hat0,A]_{\mathcal C_{\G}^{(2)}};\Z\bigr)
  \otimes\mathbf W(k)\eta^{-i},
  \qquad i>0.
\]
Here \(\mathcal C_{\G}^{(2)}\) is the subposet generated by direct sums
of even-dimensional members of the building set.  These formulas show
that the non-\(\eta\)-torsion part of cellular \(\A^1\)-homology is
determined by the same interval cohomology that appears in Rains'
forest complex, with the expected Milnor--Witt coefficient sheaves.
The $\eta$-twist is applied only to the strictly positive reduced
complex.  The independent augmentation summand gives
$\mathbf H_0^{\mathrm{cell}}(\PP(\G))=\mathbf Z$.

This is not only a motivic analogue of Rains' theorem.  Under real
realization the same quasi-isomorphism becomes an integral
chain-level model in which the two possible Milnor--Witt attaching
classes become $0$ and multiplication by $2$.  Rains' description of
the quotient of $H_*(\PP(\G)(\R);\Z)$ by its $2$-primary torsion is
recovered after passing to homology and discarding that torsion,
whereas the unreduced comparison retains the $2$-primary summands and
their attaching maps.  Thus our result strengthens Rains' calculation
already in the classical real setting.

As an application, we specialize to the braid arrangement and hence to
the moduli spaces \(\overline{\mathcal M}_{0,N}\).  In this case the
\(2\)-divisible condition becomes the elementary condition that every
block of the corresponding partition is odd.  The cellular
\(\A^1\)-homology therefore splits into odd-partition summands together
with explicit two-term \(\eta\)-cones.  We work out the resulting
decompositions for \(\overline{\mathcal M}_{0,5}\) and
\(\overline{\mathcal M}_{0,6}\), illustrating how the motivic complex
remembers both the integral real homology detected after multiplying
by \(\eta\) and the remaining \(\eta\)-primary information.
Over the real numbers we also extract the resulting Chow--Witt groups
and the full untwisted Chow--Witt ring from the integral cohomology ring
of the real locus.

A recent complementary approach of Hennig starts directly with the
boundary stratification of $\overline{\mathcal M}_{0,N}$ and computes
cellular complexes coefficient by coefficient in Rost--Schmid notation
\cite[Section~3.2.3]{Hennig2026CellularM0n}.  Its strata are products of
spaces $M_{0,a}$ and hence products of hyperplane-arrangement
complements.  Hennig computes the Milnor--Witt cohomology of
$\overline{\mathcal M}_{0,5}$ and conjectures a three-part decomposition
for arbitrary $N$ and arbitrary line-bundle twists
\cite[Conjecture~3.2.2]{Hennig2026CellularM0n}.  Corollary
\ref{mw-cohomology-moduli} proves the trivial-line-bundle case for every
$N$: the free, $\eta$-kernel, and $\eta$-cokernel multiplicities are the
explicit integers $b_i$, $r_{i+1}$, and $r_i$.

The twisted coefficientwise complex is a natural extension of this
comparison, but it is not needed for the main results of the paper.
In Definition \ref{twisted-derived-orientation} we construct it for every
wonderful model covered by the cellular filtration theorem, weighting
the two faces of each projective normal direction before they are
combined, and Theorem
\ref{twisted-cellular-comparison} gives the corresponding chain-level
comparison.  We then specialize this construction to the braid building
set and use it only to record the present status of Hennig's conjecture.
Besides
the trivial class in $\operatorname{Pic}/2$, the conjecture holds in
every arity for the root-orientation class studied by Rains, and it
holds for every line-bundle twist when $N\leq5$.  For a general twist
we show that the remaining assertion is equivalent to the absence of
odd and higher $2$-primary torsion in the integral cohomology of
$\overline{\mathcal M}_{0,N}(\R)$ with the corresponding rank-one local
system.  Equivalently, the associated twisted Bockstein spectral
sequence must collapse after its first differential.  These statements
are collected after the untwisted moduli-space calculation in
Section~8.

The paper is organized as follows.  We first compare the nested-set
construction with the toric \((K,\lambda)\)-model, then review building
sets, wonderful models, and the geometric-context formalism, including
the idempotent decomposition of the cellular theory.  We next introduce
the derived nested-set complexes and prove their behavior under a
single blow-up.  The following section recalls the relevant
\(\A^1\)-homotopy blow-up triangle and computes its Milnor--Witt
connecting class.  Finally, we combine these ingredients to prove the
cellular quasi-isomorphism and derive the explicit formulas above,
followed by examples.  The final section specializes to the braid
arrangement, determines its untwisted Milnor--Witt cohomology, and
computes the resulting Chow--Witt groups and the real Chow--Witt ring.

\subsection*{Notations}

Throughout, $k$ is a perfect field of characteristic different from
$2$.  Let $Sm_k$ be the category of smooth $k$-schemes of finite type,
equipped with the Nisnevich topology, and let
$\Delta^{op}Shv_{Nis}(Sm_k)$ be the category of simplicial Nisnevich
sheaves.  We refer to its objects as spaces.  The Nisnevich local
injective model structure has homotopy category $\mathcal H_s(k)$.
Left Bousfield localization at the projections
$\mathcal X\times\A^1\to\mathcal X$ gives the $\A^1$-model structure,
whose homotopy category is denoted by $\mathcal H(k)$.  Starting with
pointed spaces gives the pointed category $\mathcal H_\bullet(k)$.

We write
\[
  \KMW_*(k)=\bigoplus_{n\in\Z}\KMW_n(k)
\]
for Milnor--Witt K-theory.  It is generated by $\eta$ in degree $-1$
and by $[u]$ in degree $1$, for $u\in k^\times$, subject to
\[
  [u][1-u]=0,\qquad
  [uv]=[u]+[v]+\eta[u][v],\qquad
  \eta[u]=[u]\eta,\qquad
  \eta(2+\eta[-1])=0.
\]
We put $\langle u\rangle=1+\eta[u]\in\KMW_0(k)=\operatorname{GW}(k)$.
The unramified Milnor and Milnor--Witt K-theory sheaves are denoted by
$\mathrm K_n^{\mathrm M}$ and $\KMW_n$, respectively; both are strictly
$\A^1$-invariant \cite{Mor10}.

\subsection*{Cellular conventions}

We recall only the part of cellular $\A^1$-homology needed below.
Write $Ab_{\A^1}(k)$ for the abelian category of strictly
$\A^1$-invariant Nisnevich sheaves.  A smooth $k$-scheme $Y$ is
\emph{cohomologically trivial} if
\[
  H^q_{\mathrm{Nis}}(Y,M)=0
  \qquad(q>0)
\]
for every $M\in Ab_{\A^1}(k)$.  A cellular structure on a smooth
$k$-scheme $X$ is an increasing filtration by open subschemes
\[
  \varnothing=\Omega_{-1}X
  \subset\Omega_0X\subset\cdots\subset\Omega_sX=X
\]
such that
\[
  X_i:=\Omega_iX\setminus\Omega_{i-1}X
  =\coprod_{\alpha\in J_i}Y_{i\alpha}
\]
is smooth of pure codimension $i$ in $\Omega_iX$ and every
$Y_{i\alpha}$ is cohomologically trivial.

Let $\nu_i$ be the normal bundle of $X_i\hookrightarrow\Omega_iX$.
Homotopy purity and the exact couple of the filtration give connecting
morphisms
\[
  \partial_i:
  \widetilde{\mathbf H}^{\A^1}_i(\operatorname{Th}(\nu_i))
  \longrightarrow
  \widetilde{\mathbf H}^{\A^1}_{i-1}
  (\operatorname{Th}(\nu_{i-1})).
\]
They form the cellular $\A^1$-chain complex
\[
  C_i^{\mathrm{cell}}(X)
  :=
  \widetilde{\mathbf H}^{\A^1}_i(\operatorname{Th}(\nu_i)).
\]
After choosing orientations of the normal bundles, the Thom
isomorphisms identify its terms with
\begin{equation}
  C_i^{\mathrm{cell}}(X)
  \cong
  \bigoplus_{\alpha\in J_i}
  \KMW_i\otimes\mathbf Z_{\A^1}[Y_{i\alpha}].
  \label{eq:oriented-cellular-terms}
\end{equation}
The index $i$ is both the codimension of the stratum and the
homological degree of its Thom term; this is the degree convention used
for every shift below.  Changes of Thom orientation act through
Milnor--Witt degrees.  In the nested-set model these choices and their
compatibilities are recorded by the derived orientation coefficient
complex $\Dor$.

For the connected pointed models considered in this paper, the
distinguished zero-cell splits the augmentation:
\begin{equation}
  C_*^{\mathrm{cell}}(X)
  \cong
  \mathbf Z\oplus\widetilde C_*^{\mathrm{cell}}(X).
  \label{eq:cellular-augmentation-splitting}
\end{equation}
The first summand is concentrated in degree zero.  In particular, all
$\eta$-twists used below are applied only to the strictly positive
reduced complex.

Finally, if $M\in Ab_{\A^1}(k)$, cellular cochains compute Nisnevich
cohomology:
\begin{equation}
\begin{split}
  H^q_{\mathrm{Nis}}(X,M)
  &\cong
  H^q\operatorname{Hom}_{Ab_{\A^1}(k)}
  \bigl(C_*^{\mathrm{cell}}(X),M\bigr)\\
  &\cong
  \operatorname{Hom}_{D(Ab_{\A^1}(k))}
  \bigl(C_*^{\mathrm{cell}}(X),M[q]\bigr).
\end{split}
\label{eq:cellular-cohomology-comparison}
\end{equation}
This is the comparison of
\cite[Proposition 2.27]{MOREL2023109346}; taking $M=\KMW_q$ is the
passage from the cellular calculations to Chow--Witt groups used in
Section~8.

Hennig uses the equivalent coefficientwise complex in homological
Rost--Schmid notation
\cite[Definition~2.2.2 and Theorem~2.2.3]{Hennig2026CellularM0n}.
After the Poincar\'e-duality shift, it is obtained by evaluating the
universal complex above on the chosen coefficient sheaf.  We retain the
universal formulation because a single chain-level decomposition then
computes all strictly $\A^1$-invariant coefficients simultaneously.

\subsection*{Acknowledgments}
The authors would like to thank Jeremy Usatine, Kirsten Wickelgren and Nanjun Yang for their helpful discussions and advice.
%suggestions 

%% file: combinatorial.tex
\section{Subspace arrangements and De Concini--Procesi models}

The toric calculation of
\cite{liu2025cellularmathbba1homologysmoothtoric} provides a useful
prototype for the construction below.  A smooth fan may be encoded by a
pair $\Sigma=(K,\lambda)$, where $K$ is the simplicial complex of cones
and
\[
  \lambda:\Z^{K(0)}\longrightarrow N
\]
sends the standard basis vectors to the primitive ray generators.  If
\[
  \AZ_K=(\A^1,\Gm)^K\subseteq\A^{K(0)}
\]
is the motivic moment-angle space, the Cox quotient gives
\[
  X_\Sigma\cong\AZ_K/\ker(\exp\lambda).
\]
The canonical cubical cells of $\AZ_K$, together with the changes of
orientation induced by $\ker(\exp\lambda)$, produce the cellular
$\A^1$-chain complex of $X_\Sigma$
\cite[Propositions~2.23 and~3.4]{liu2025cellularmathbba1homologysmoothtoric}.
For a pure shellable fan, this complex retracts onto a combinatorial
complex built from the subcomplexes $K_\omega$, with
$\omega\in\operatorname{row}(\lambda\bmod 2)$; its elementary
Milnor--Witt extensions are cones of multiplication by $l\eta$
\cite[Corollary~3.11]{liu2025cellularmathbba1homologysmoothtoric}.

The common ground between that construction and the present one is
provided by Coxeter toric varieties.  Let $R$ be an irreducible root
system in a finite-dimensional Euclidean space.  The closures of its
Weyl chambers form a regular complete fan $\Sigma_R$ \cite{procesi}.
Write $X_R$ for the associated smooth projective toric variety, $K_R$
for its Coxeter complex, and $\lambda_R$ for its characteristic map.
For $R=A_n$, the vertices of $K_{A_n}$ are indexed by the nonempty
proper subsets of $\llBra{n+1}$, its simplices are chains of such
subsets, and, with $e_{n+1}=-\sum_{i=1}^n e_i$,
\[
  \lambda_{A_n}(L)=\sum_{i\in L}e_i.
\]
The variety $X_{A_n}$ is the permutohedral toric variety.  It is also
the De Concini--Procesi model obtained by blowing up the coordinate
linear strata of $\PP^n$ \cite{CP,Denham_2013}; hence its cellular
complex can be described both by the toric pair
$(K_{A_n},\lambda_{A_n})$ and by a building set.

For a general wonderful model there is no toric quotient.  The
simplicial complex $K$ is replaced by the nested-set complex, and the
orientation information carried by $\lambda$ is replaced by the
derived orientation coefficients $\Dor$.  The blow-up calculation in
Section~\ref{sec:blow-up} shows that the corresponding Milnor--Witt
attaching class is $(c-1)_\epsilon\eta$ in codimension $c$.  Thus the
toric $l\eta$-extensions and the $\eta$-twisted nested-set complex are
two instances of the same orientation-sensitive cellular mechanism.
The type-$A$ overlap above concerns the coordinate wonderful model; it
should not be confused with the braid-arrangement model in
Example~\ref{moduli}.

We now follow the dual-space convention of \cite{Rains_2010}.  A
\emph{subspace arrangement} in a finite-dimensional vector space $V$ is
a finite collection $\G$ of subspaces of $V^*$.  Let
\[
  \mathcal C_{\G}
  :=
  \left\{
    \sum_{G\in S}G : S\subseteq\G
  \right\},
\]
where the empty sum is $0$.  We will also use the corresponding
collections of actual subspaces and quotient spaces
\[
  \mathcal S_{\G}:=\{A^\perp\subseteq V:A\in\mathcal C_{\G}\},
  \qquad
  \mathcal Q_{\G}:=\{V/A^\perp:A\in\mathcal C_{\G}\}.
\]

For $U\in\mathcal C_{\G}$, a \emph{$\G$-decomposition} of $U$ is a
direct-sum decomposition
\[
  U=\bigoplus_{j=1}^r U_j,
  \qquad
  0\neq U_j\in\mathcal C_{\G},
\]
such that every $G\in\G$ contained in $U$ is contained in exactly one
$U_j$.  Let $\overline{\G}$ be the collection of
$\G$-indecomposable elements of $\mathcal C_{\G}$.  Then
$\mathcal C_{\G}=\mathcal C_{\overline{\G}}$, and a
\emph{building set} is an arrangement satisfying
$\G=\overline{\G}$.

For a linear map $f:V\to V'$, put
\[
  f^{-1}\G'
  :=
  \overline{\{f^*(G):G\in\G'\}}.
\]
A morphism $\G\to\G'$ is a linear map $f:V\to V'$ such that
$f^*(G)\in\G$ for every $G\in\G'$.  These form the category
$\Build_k$, together with the Grothendieck fibration
\[
  P:\Build_k\longrightarrow\mathbf{Vect}_k.
\]
A \emph{weak morphism} $\G\dashrightarrow\G'$ is a linear map satisfying
$f^*(G)\in\G\cup\{0\}$ for every $G\in\G'$.

For a fixed vector space $V$, the fiber $\Build_k(V)$ is ordered by
reverse inclusion: $\operatorname{Hom}_{\Build_k(V)}(\G_1,\G_2)$ is
nonempty precisely when $\G_1\supseteq\G_2$.  If $f:V\to V'$ and
$\G'\in\Build_k(V')$, the assignment above is the pullback functor
$f^{-1}:\Build_k(V')\to\Build_k(V)$.  We also use the weak pullback
\[
  f^!\G'
  :=
  \overline{\{f^*(G):G\in\G',\ f^*(G)\neq0\}},
\]
which is cartesian for the corresponding weak fibration
$P_w:w\Build_k\to\Vect_k$.

We use separate notation for operations on subspaces of $V$ and on
subspaces of $V^*$.  If $i_W:W\hookrightarrow V$ and
$q_W:V\twoheadrightarrow V/W$, define
\[
  i_W^!\G
  :=
  \overline{
    \{i_W^*(G):G\in\G,\ i_W^*(G)\neq0\}
  },
\]
\[
  q_{W,\#}\G
  :=
  \overline{\{G\in\G:G\subseteq W^\perp\}}.
\]
For $A\in\mathcal C_{\G}\subseteq V^*$, the notation of
\cite{Rains_2010} is
\[
  \G|_A
  :=
  q_{A^\perp,\#}\G
  =
  \{G\in\G:G\subseteq A\},
\]
\[
  \G/A
  :=
  i_{A^\perp}^!\G
  =
  \overline{
    \{(G+A)/A:G\in\G,\ G\nsubseteq A\}
  }.
\]
Thus $\G|_A$ is a building set on $V/A^\perp$, whereas $\G/A$
is a building set on $A^\perp$.  This convention removes the ambiguity
between restriction in $V$ and restriction in $V^*$.
When the actual subspace $W\subseteq V$ is the more natural label, we
write $\G|_W:=i_W^!\G$ and $\G/W:=q_{W,\#}\G$.  Thus, for
$W=A^\perp$, the actual-space notation gives $\G/W=\G|_A$ and
$\G|_W=\G/A$.  With these conventions $q_{W,\#}$ is left adjoint to
$q_W^{-1}$.  If
$\G\setminus W:=\{G\in\G:G\nsubseteq W^\perp\}$, then
$i_W^!\G=i_W^{-1}(\G\setminus W)$.

De Concini and Procesi associate a smooth projective variety to $\G$.
Set
\[
  \mathcal A_{\G}
  :=
  V\setminus\bigcup_{G\in\G}G^\perp,
  \qquad
  U_{\G}:=\mathcal A_{\G}/\Gm\subseteq\PP(V).
\]
For $G\in\G$, let $\PP_G:=\PP(V/G^\perp)$.  The diagonal map
\[
  \mathcal A_{\G}\longrightarrow\prod_{G\in\G}\PP_G
\]
factors through $U_{\G}$.  We denote the closure of its image by
\[
  \PP(\G):=\overline{Y}_{\G}
\]
and write $\rho_G:\PP(\G)\to\PP_G$ for the natural projection.
If $\dim G=1$, equivalently if $G^\perp$ is a hyperplane, then
$\PP_G$ is a point.  Consequently, adjoining or removing such an
element has no effect on the model, provided the result remains a
building set.  The \emph{root} of $\G$ is the maximal element
$\operatorname{root}(\G)$ of $\mathcal C_{\G}$.

\begin{example}\label{moduli}
Let $m\geq3$, let $V=k^m/k\mathbf1$, and identify
\[
  V^*=\left\{(a_1,\ldots,a_m)\in(k^m)^*:
  \sum_i a_i=0\right\}.
\]
For $S\subseteq[m]$ with $|S|\geq2$, put
\[
  G_S:=\operatorname{span}\{e_i^*-e_j^*:i,j\in S\}\subseteq V^*.
\]
The building set of indecomposable flats of the braid arrangement is
\[
  \overline{A}_{m-1}:=\{G_S:S\subseteq[m],\ |S|\geq2\}.
\]
Since $\dim G_S=|S|-1$, the wonderful model embeds as
\[
  \PP(\overline{A}_{m-1})
  \hookrightarrow
  \prod_{\substack{S\subseteq[m]\\ |S|\geq2}}
  \PP(V/G_S^\perp)
  \cong
  \prod_{\substack{S\subseteq[m]\\ |S|\geq2}}
  \PP^{|S|-2}.
\]
The construction following \cite[(2.20)]{Rains_2010} identifies this
model canonically with the moduli space of stable pointed rational
curves:
\[
  \PP(\overline{A}_{m-1})
  \cong
  \overline{\mathcal M}_{0,m+1}.
\]
Under this isomorphism, the factor indexed by $S$ is obtained by
forgetting the markings outside $S\cup\{\infty\}$, stabilizing, and
then taking the resulting configuration modulo translations and
scalings.
\end{example}

\section{Geometric models and categorical fibrations}

We next reformulate wonderful models as section spaces of a diagram.
Let
\[
  \mathrm{Corr}(\mathrm{Var}_k)
\]
be the $2$-category whose objects are $k$-varieties, whose morphisms are spans
$X\leftarrow Z\rightarrow Y$, and whose $2$-morphisms are morphisms of
spans.

\begin{definition}
A \emph{geometric context} is a lax $2$-functor
\[
  \cF:\Vect_k\longrightarrow\mathrm{Corr}(\mathrm{Var}_k)
\]
such that
\begin{enumerate}
  \item $\cF(\mathbf 0)=\varnothing$, and $\cF(V)\neq\varnothing$ if
  $V\neq\mathbf 0$;
  \item for every linear map $f:V\to W$, the span
  $\cF(V)\leftarrow\cF(f)\rightarrow\cF(W)$ identifies $\cF(f)$ with a
  nonempty subvariety of $\cF(V)\times\cF(W)$;
  \item if $i:V\hookrightarrow W$ is injective, then
  $\cF(i)$ is the span
  $\cF(V)\xleftarrow{=}\cF(V)\hookrightarrow\cF(W)$, and the analogous
  square for a commutative diagram of injections is cartesian;
  \item for a zero morphism $f_0:V\to\mathbf 0\to W$, the span is
  $\cF(V)\xleftarrow{=}\cF(V)\times\cF(W)\hookrightarrow\cF(W)$.
\end{enumerate}
\end{definition}

\begin{example}
The main example is the \emph{projective context}.  It sends a vector
space $V$ to $\PP(V)$, and a linear map $f:V\to W$ to the
correspondence
\[
  \PP(f)
  :=
  \{([v],[w])\in\PP(V)\times\PP(W): f(v)=\lambda w
  \text{ for some }\lambda\in k\}.
\]
\end{example}

\begin{example}
A related weak context is obtained by taking $U(V)=\PP(V)$ and
\[
  U(f):=
  \{([v],[w])\in\PP(V)\times\PP(W): f(v)=w
  \text{ after choosing representatives}\}.
\]
This keeps track of the complement $U_{\G}$ rather than its
projective closure.
\end{example}

\begin{remark}
There is also a sphere variant, useful for comparing with sphere
blow-ups:
\[
  \bS(V):=\afnz{V},
  \qquad
  \bS(f):=
  \{(v,w,\lambda):f(v)=\lambda w\}.
\]
This weakens the monomorphism condition in the definition above.
\end{remark}

The Grothendieck construction gives a fibration
$p:\int\cF\to\Vect_k$.  Its objects are pairs $(V,x)$ with
$x\in\cF(V)$, and
\[
  \operatorname{Hom}_{\int\cF}((V,x),(W,y))
  =
  \{f:V\to W:(x,y)\in\cF(f)\}.
\]
For a finite category $I$ and a diagram $D:I\to\Vect_k$, define the
$\cF$-model of $D$ to be the space of sections
\[
  \cF(D):=\Gamma(D,\cF)
  =
  \operatorname{Fun}_D(I,\int\cF).
\]
Equivalently, if $D_0$ and $D_1$ denote the objects and morphisms of
the diagram, then
\[
  \cF(D)
  =
  \{(x_a)\in\prod_{a\in D_0}\cF(D(a)):
  (x_{s(\alpha)},x_{t(\alpha)})\in\cF(D(\alpha))
  \text{ for every }\alpha\in D_1\}.
\]

A building set $\G\in\Build_k(V)$ is viewed as the diagram
\[
  |\G|\longrightarrow\Vect_k,
  \qquad
  G\longmapsto V/G^\perp .
\]
For the projective context this construction recovers the
De Concini--Procesi model:
\[
  \PP(\G)=\Gamma(\G,\PP)=\overline Y_{\G},
\]
and for the complement context it recovers $U_{\G}$.

The construction is functorial for building-set morphisms.  If
$f:V'\to V$ and $\G\in\Build_k(V)$, the maps
\[
  V'/(f^*G)^\perp\hookrightarrow V/G^\perp
\]
give an injective natural transformation $f^{-1}\G\to\G$.  Thus any
morphism $\varphi:\G_1\to\G_2$ induces
$\cF(\varphi):\cF(\G_1)\to\cF(\G_2)$.  For a weak morphism one instead
uses the natural transformation
\[
  f^!\G\sqcup\G/\operatorname{Im}(f)\longrightarrow\G,
\]
where the second summand records the factors killed by $f^*$.  This
gives the operad-type map
\[
  \cF(f^!\G)\times\cF(\G/\operatorname{Im}(f))
  \longrightarrow
  \cF(\G).
\]

\section{Decomposition}

Let $\cF$ be a geometric context.  For $A\in\mathcal C_{\G}$, the
subdiagram $\G|_A\subseteq\G$ gives a natural projection
\[
  \pi_A:\cF(\G)\longrightarrow\cF(\G|_A).
\]
The weak morphism associated with the inclusion $A^\perp\hookrightarrow V$
gives an operad map
\[
  \cF(\G/A)\times\cF(\G|_A)\longrightarrow\cF(\G).
\]
After choosing a point of $\cF(\G/A)$, this map gives a homotopy class
of sections
$\pi_A^{[-1]}$ \cite[Corollary 2.8]{Rains_2010}.  The resulting
splitting yields, in the stable motivic homotopy category, a
decomposition into the image of $\pi_A^{[-1]}\pi_A$ and its fiber.  By
iterating over the poset $\mathcal C_{\G}$, one obtains summands
$\cF(\G)[V/A^\perp]$ characterized by
\[
  \pi_B\bigl(\cF(\G)[V/A^\perp]\bigr)=*
  \qquad
  \text{for every }A\subsetneq B.
\]
For $\cF=\PP$, the commuting idempotents give the grading
\[
  \widetilde{\mathbf H}^{\A^1}_*(\PP(\G))
  =
  \bigoplus_{A\in\mathcal C_{\G}}
  \widetilde{\mathbf H}^{\A^1}_*(\PP(\G))[V/A^\perp],
\]
where the $V/A^\perp$-summand is fixed by
$\pi_A^{[-1]}\pi_A$ and annihilated by
$\pi_B^{[-1]}\pi_B$ for every $B\subsetneq A$.  We also write this
summand as $[A]$ when using the poset label.  In particular, the
$V^*$-graded root summand is denoted by $[V]$, in agreement with the
geometric notation of the preceding sections.

\section{Nested sets of building sets and derived twisted complexes}

For $m\geq1$, let $\mathcal C_{\G}^{(m)}$ be the subposet of
$\mathcal C_{\G}$ consisting of elements that are direct sums of
members $G\in\G$ with $\dim G$ divisible by $m$.

\begin{definition}
A \emph{nested set} is a subset $F\subseteq\G$ such that every
collection of pairwise incomparable elements of $F$ is a
$\G$-decomposition of its sum.  Following \cite{Rains_2010}, such a
set may equivalently be called a $\G$-forest.  Its root is
\[
  \operatorname{root}(F):=\sum_{G\in F}G\in\mathcal C_{\G},
\]
and, for $G\in F$, its child is
\[
  \operatorname{child}_F(G)
  :=
  \sum_{\substack{H\in F\\H\subsetneq G}}H.
\]
The nested set $F$ is \emph{$m$-divisible} if $m$ divides
$\dim G$ for every $G\in F$.
\end{definition}

For $A\in\mathcal C_{\G}^{(m)}$, let
$\mathcal N_A^{(m)}(\G)$ be the poset of $m$-divisible nested sets
with root $A$.  The complex
$\mathcal{CN}^{(m)}_*(A;R)$ is freely generated in degree $r$ by
ordered members of $\mathcal N_A^{(m)}(\G)$ with $r$ nodes, with
different orderings identified by the usual sign.  If $G=G_i\in F$,
set
\[
  \partial_G F=(-1)^{i-1}(F\setminus\{G\}),
\]
and set $\partial_G F=0$ if $G\notin F$.  The differential is the sum
over the elements $G$ that are proper subspaces of the components of
$A$.  Rains' forest subdivision gives a canonical isomorphism
\cite[Theorem 3.2]{Rains_2010}
\[
  H_*\bigl(\mathcal{CN}^{(m)}_*(A;R)\bigr)
  \cong
  H_*\bigl([\hat0,A]_{\mathcal C_{\G}^{(m)}};R\bigr).
\]

For a vector space $W$, write
\[
  \operatorname{or}(W)
  :=
  \widetilde{\mathbf H}^{\A^1}_{\dim W}
  \bigl(W/(W\setminus\{0\})\bigr)
  \cong
  \KMW_{\dim W}.
\]
Every short exact sequence
$0\to W'\to W\to W''\to0$ induces
\[
  \operatorname{or}(W')\otimes\operatorname{or}(W'')
  \xrightarrow{\ \cong\ }
  \operatorname{or}(W).
\]

We use the derived-orientation notation of the preceding sections.
Let $\chi_2(r)\in\{0,1\}$ denote the parity of $r$, and define
\[
  \operatorname{Dor}(W)_i
  =
  \begin{cases}
    \Z e(W)_i,&0\leq-i<\dim W-1,\\
    0,&\text{otherwise},
  \end{cases}
  \qquad
  \delta_i=\chi_2(\dim W+i).
\]
Then
\[
  \operatorname{Dor}(W)
  \simeq
  \begin{cases}
    \Z,&\dim W\ \text{even},\\
    0,&\dim W\ \text{odd},
  \end{cases}
  \qquad
  \widehat{\operatorname{Dor}}(W)
  :=
  \operatorname{Dor}(W)[\dim W-1].
\]
For a subspace $H\subseteq W$, the short exact sequence
$0\to H\to W\to W/H\to0$ induces the structure map
$D_H:\operatorname{Dor}(W)\to
\operatorname{Dor}(H)\otimes\operatorname{Dor}(W/H)$, given on
generators by
\[
  D_H(e(W)_i)
  =
  \begin{cases}
    e(H)_i\otimes e(W/H)_0,
    &0\leq-i<\dim H-1\text{ and }\chi_2(\dim W/H)=0,\\
    0,&\text{otherwise}.
  \end{cases}
\]
For a nested set $F$, the coefficient complex is
\[
  \operatorname{Dor}(F)
  :=
  \bigotimes_{G\in F}
  \operatorname{Dor}
  \bigl(\operatorname{child}_F(G)^\perp/G^\perp\bigr).
\]
The structure maps associated with the short exact sequences above
make these coefficients into a constructible complex on the nested-set
poset.  We write
\[
  \mathcal{CN}_A(\G,\operatorname{Dor})
  :=
  R\Gamma\bigl(\mathcal N_A^{(1)}(\G),\operatorname{Dor}\bigr),
  \qquad
  \mathcal{CN}(\G,\operatorname{Dor})
  :=
  \bigoplus_{A\in\mathcal C_{\G}}
  \mathcal{CN}_A(\G,\operatorname{Dor}).
\]
Equivalently, if $A=W^\perp$, then
$\mathcal{CN}_A(\G,\operatorname{Dor})$ is the complex
$C\mathcal N(V/W,\G/W,\operatorname{Dor})$ of nested sets with root
$A$.  Its generators may be written as
\[
  \bigotimes_{G\in F}
  e\bigl(\operatorname{child}_F(G)^\perp/G^\perp\bigr)_{i_G},
  \qquad
  |F|-\sum_{G\in F}i_G=n,
\]
with differential $d=\delta+(-1)^nD$, where $\delta$ is the internal
orientation differential and $D$ is the sum of the maps obtained by
adding one nested-set node.
Its cohomology is the cohomology of the $2$-divisible interval:
\[
  H^i\bigl(\mathcal{CN}_A(\G,\operatorname{Dor})\bigr)
  \cong
  H^i\bigl([\hat0,A]_{\mathcal C_{\G}^{(2)}};\Z\bigr).
\]

The nested complex behaves well under the blow-up operation on building
sets.  Let $\G'$ be a building set, let $G\subset V^*$ satisfy
$G\notin\mathcal C_{\G'}$, and suppose
$\G=\G'\sqcup\{G\}$ is again a building set.  Put
\[
  B_G
  :=
  C\mathcal N(V/G^\perp,\G|_G,\operatorname{Dor})
  \otimes
  C\mathcal N(G^\perp,\G/G,\operatorname{Dor}).
\]
There is an exact sequence of complexes
\begin{equation}
  0\to
  B_G
  \to
  C\mathcal N(V,\G,\operatorname{Dor})
  \to
  C\mathcal N(V,\G',\operatorname{Dor})
  \to0.
  \label{eq:absolute-nested-blowup}
\end{equation}
Indeed, the kernel is generated by nested sets containing the new node
$G$, and such a nested set decomposes into its quotient and restriction
parts.

\begin{lemma}\label{AbsBlUp}
The exact sequence \eqref{eq:absolute-nested-blowup} is functorial in
the blow-up step.  Its connecting morphism is induced by the
derived-orientation map $D_{G^\perp}$:
\[
  \gamma_G:
  C\mathcal N(V,\G',\operatorname{Dor})
  \longrightarrow
  C\mathcal N(V/G^\perp\oplus G^\perp,
  \G|_G\oplus\G/G,\operatorname{Dor})[1].
\]
\end{lemma}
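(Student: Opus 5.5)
The proof splits into establishing exactness degreewise, checking functoriality, and computing the connecting morphism by the standard snake-lemma recipe.

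\emph{Exactness.} Since $\G'\subset\G$, every $\G'$-nested set is $\G$-nested. For $F\not\ni G$, the only node added by the differential $D$ that is not in $\G'$ is $G$ itself. So restriction to nested sets avoiding $G$ is a surjective chain map $C\mathcal N(V,\G,\Dor)\to C\mathcal N(V,\G',\Dor)$ once the coefficients are identified. The point to check is that a $\G$-nested set $F$ with $G\notin F$ is $\G'$-nested with the same children. This holds because $G\notin\mathcal C_{\G'}$ means $G$ never occurs as a sum of incomparable elements of $F$. Consequently the factors $\Dor(\operatorname{child}_F(H)^\perp/H^\perp)$ agree in the two complexes.

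\emph{The kernel.} The kernel is spanned by nested sets containing $G$. Such an $F$ splits uniquely as follows:
\begin{itemize}
\item $F_{\le G}=\{H\in F: H\subseteq G\}$, a nested set for $\G|_G$ with root $G$;
\item the image of $F\setminus F_{\le G}$ in $V^*/G$, a nested set for $\G/G$.
\end{itemize}
This is the standard bijection of \cite{Rains_2010}. The orientation factors match: for $H\subseteq G$ the factor is computed inside $V/G^\perp$, and for $H\not\subseteq G$ one uses
\[
\operatorname{child}_F(H)^\perp/H^\perp\cong \operatorname{child}(\bar H)^\perp/\bar H^\perp
\]
in $G^\perp$. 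I will verify the signs of $d=\delta+(-1)^nD$ by ordering $G$ first and the nodes below $G$ before the rest, so that the Koszul sign of the tensor product reproduces $(-1)^{i-1}$. This identifies the kernel with $B_G$.

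\emph{Functoriality.} Given a morphism of blow-up steps (a building-set morphism carrying $G$ to $G$ and $\G'$ into $\G'$), the induced maps on nested complexes are defined nodewise. They preserve the subcomplexes of nested sets containing $G$, so the whole short exact sequence is natural.

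\emph{Connecting morphism.} Take a cycle $x$ in $C\mathcal N(V,\G',\Dor)$ and lift it tautologically, by the same formula, to $\G$. Its $\G$-differential differs from its $\G'$-differential only by the terms of $D$ that insert the node $G$. These terms come from the structure map attached to the extension
\[
0\to \operatorname{child}(G)^\perp/G^\perp\to\cdots,
\]
that is, the orientation of the unique node $H$ of $F$ whose space $\operatorname{child}_F(H)^\perp/H^\perp$ contains $G^\perp$-directions is split along $G^\perp$. After the decomposition of the previous step, this is exactly $D_{G^\perp}$ tensored with the identity on the other factors. Hence $\gamma_G=\pm(\mathrm{id}\otimes D_{G^\perp}\otimes\mathrm{id})$, with the sign $(-1)^n$ built into $d$ and absorbed by the shift $[1]$.

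The main obstacle is this last identification. One must confirm that inserting $G$ into $F$ changes exactly one orientation factor, the one at the minimal node of $F$ containing $G$. One must also confirm that the resulting map agrees on generators with the explicit formula for $D_H$, including the parity condition $\chi_2(\dim W/H)=0$ and the vanishing cases. I would check this first for $F$ of a single node, using the definition of $D_H$ directly, and then reduce the general case to it by the tensor decomposition over nodes.
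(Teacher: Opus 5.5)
Your proposal is correct and follows the paper's route. The paper's only justification is the remark that the kernel of \eqref{eq:absolute-nested-blowup} is spanned by nested sets containing $G$ and splits into restriction and quotient parts. Your reading of $\gamma_G$ as the $G$-insertion component of $D$ (the splitting of the single factor at $m_G(F)$) is what ``induced by $D_{G^\perp}$'' means. Two small repairs are needed. First, the direction of exactness that actually uses $G\notin\mathcal C_{\G'}$ is that every $\G'$-nested set is $\G$-nested: if $G$ lay in a sum of incomparable nodes but in none of them, the building property would make $G$ a sub-sum of those nodes. Second, in the paper's convention for $D_H$, the new node's factor $\operatorname{child}_{F\cup\{G\}}(G)^\perp/G^\perp$ is the quotient of $\operatorname{child}_F(m_G(F))^\perp/m_G(F)^\perp$, not the subspace; this is what puts the parity condition on the correct factor.
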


Assume now that $G$ is minimal in $\G$.  Then
\[
  C\mathcal N(V/G^\perp,\G|_G,\operatorname{Dor})
  =
  \widehat{\operatorname{Dor}}(V/G^\perp).
\]
For $F\in\mathcal N(V,\G')$, let $m_G(F)$ be the unique minimal member
of $F$ containing $G$, when such a member exists.  Define
$\mathcal U_{\setminus G}(V,\G',\operatorname{Dor})$ to be the
subcomplex generated by those tensors for which
\[
  -i_{m_G(F)}>\dim m_G(F)-\dim G-2.
\]
The corresponding Thom complex is
\[
  \operatorname{Th}_G(G^\perp,\G/G,\operatorname{Dor})
  :=
  C\mathcal N(V,\G',\operatorname{Dor})/
  \mathcal U_{\setminus G}(V,\G',\operatorname{Dor}).
\]
It is degreewise isomorphic to
$C\mathcal N(G^\perp,\G/G,\operatorname{Dor})$, though the
differential may differ.  If $\dim G$ is even, the two complexes agree.

\begin{lemma}\label{AbsGysin}
For $G$ minimal in $\G$, the connecting morphism $\gamma_G$ factors as
\begin{multline*}
  C\mathcal N(V,\G',\operatorname{Dor})
  \longrightarrow
  \operatorname{Th}_G(G^\perp,\G/G,\operatorname{Dor})
  \otimes e(V/G^\perp)_0\\
  \xrightarrow{\ \chi_2(\dim G)\ }
  C\mathcal N(G^\perp,\G/G,\operatorname{Dor})
  \otimes\widehat{\operatorname{Dor}}(V/G^\perp).
\end{multline*}
\end{lemma}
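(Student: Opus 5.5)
We compute the connecting morphism of \eqref{eq:absolute-nested-blowup} directly from the snake lemma, using the explicit generators of $C\mathcal N(V,\G',\operatorname{Dor})$, and then show that it only sees the piece of the coefficient of $m_G(F)$ that survives the quotient by $\mathcal U_{\setminus G}$.

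First, take a generator $x=\bigotimes_{H\in F}e(\operatorname{child}_F(H)^\perp/H^\perp)_{i_H}$ with $F\in\mathcal N(V,\G')$, and lift it to the same tensor in $C\mathcal N(V,\G,\operatorname{Dor})$. Since $G\notin F$, the lift differs from the original only in the coefficient attached to $m_G(F)$. Once $G$ becomes available, $G$ sits inside $m_G(F)$, so $\operatorname{child}_F(m_G(F))$ is computed relative to $\G$. By Lemma~\ref{AbsBlUp}, $d$ of the lift minus the lift of $dx$ lies in $B_G$. It is exactly the term of $D$ that adds the node $G$, and it is given by $D_{G^\perp}$ applied to the factor indexed by $m_G(F)$. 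If $m_G(F)$ does not exist, $G$ cannot be inserted as a node below the root components, and $\gamma_G(x)=0$. This identifies $\gamma_G$ on generators as $x\mapsto \pm D_{G^\perp}(x)$, where the sign $(-1)^n$ comes from $d=\delta+(-1)^nD$.

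Second, we show the factorization through the quotient. Because $G$ is minimal, inserting $G$ under $m_G(F)$ splits the normal space $\operatorname{child}^\perp/m_G(F)^\perp$ along the short exact sequence whose sub is $V/G^\perp$-directional of dimension $\dim G$ and whose quotient has dimension $\dim m_G(F)-\dim G$ (after accounting for the other children). By the defining formula for $D_H$, the image $e(\cdot)_i\mapsto e(\cdot)_i\otimes e(\cdot)_0$ is nonzero only when $0\le -i<(\dim m_G(F)-\dim G)-1$, that is, when $-i\le\dim m_G(F)-\dim G-2$. So $D_{G^\perp}$ kills precisely the generators of $\mathcal U_{\setminus G}$. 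Hence $\gamma_G$ descends to $\operatorname{Th}_G(G^\perp,\G/G,\operatorname{Dor})$. The remaining factor lands in $e(V/G^\perp)_0$, which is the bottom generator of $\widehat{\operatorname{Dor}}(V/G^\perp)=C\mathcal N(V/G^\perp,\G|_G,\operatorname{Dor})$, using minimality of $G$.

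Third, we identify the second map as multiplication by $\chi_2(\dim G)$. The parity condition in $D_H$, namely $\chi_2(\dim W/H)=0$, together with the internal differential $\delta_i=\chi_2(\dim W+i)$ of $\operatorname{Dor}(V/G^\perp)$, shows that $e(V/G^\perp)_0$ represents a cycle (respectively a boundary) according to the parity of $\dim G$. Under the degreewise isomorphism $\operatorname{Th}_G\cong C\mathcal N(G^\perp,\G/G,\operatorname{Dor})$ the map is therefore the identity times $\chi_2(\dim G)$. When $\dim G$ is even, the two differentials agree, so this is a chain map. When $\dim G$ is odd, the map is zero, so the discrepancy in differentials is irrelevant.

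The main obstacle is the sign and degree bookkeeping in the second and third steps. One must verify that the relevant exact sequence for $D_{G^\perp}$ has parity exactly $\dim G$ (which requires translating between $\perp$-quotients). One must also check that the composite is a chain map for the possibly different differential on $\operatorname{Th}_G$, i.e.\ that $\mathcal U_{\setminus G}$ is indeed a subcomplex. For the latter I would first check that $\delta$ and $D$ can only decrease $-i_{m_G(F)}$ or replace $m_G(F)$ by a smaller node, and that either move preserves the inequality.
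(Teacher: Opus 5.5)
The paper states this lemma without a separate proof; it is only invoked inside the proof of Theorem~\ref{nested-cellular}. Your snake-lemma route is the natural one: $\gamma_G$ is the $(-1)^n$-signed part of $D$ that inserts $G$ directly below $m=m_G(F)$, and one reads off where $D_{G^\perp}$ vanishes. However, the parity bookkeeping in your Steps~2--3, which is the actual content of the lemma, is wrong.

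Put $c=\operatorname{child}_F(m)$ and $W=c^\perp/m^\perp$. In the sequence defining $D_H$:
the \emph{sub} is $H=(c\oplus G)^\perp/m^\perp$, the new factor of $m$, which keeps the index $i$;
the \emph{quotient} is $W/H\cong V/G^\perp$, the new leaf $G$, which receives $e_0$.
You swapped these, although your range condition silently uses the correct assignment.

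The parity clause of $D_H$ therefore reads $\chi_2(\dim G)=0$, so $\gamma_G\neq0$ only when $\dim G$ is even. The factor your computation actually produces is $\chi_2(\dim G-1)$. This matches the factor $\eta\chi_2(\dim G-1)$ used in the proof of Theorem~\ref{nested-cellular}, and Corollary~\ref{blowup-parity} for a center of codimension $\dim G$. Your claims ``identity times $\chi_2(\dim G)$'' and ``zero when $\dim G$ is odd'' contradict each other, since $\chi_2(r)=1$ for odd $r$.

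The literal factor $\chi_2(\dim G)$ is in fact false. Take the situation of Example~\ref{egPP}, $\G=\{V^*,G\}$, with $\dim V=4$ and $\dim G=2$. Then $\gamma_G(e(V)_0)=\pm e(G^\perp)_0\otimes e(V/G^\perp)_0$ is an isomorphism $\Z\to\Z$ on homology, whereas multiplication by $\chi_2(2)=0$ would be zero. So state and prove the corrected factor $\chi_2(\dim G-1)$.

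Also drop the cycle/boundary justification. The class $e(V/G^\perp)_0$ is never a boundary, and for odd $\dim G\geq3$ it is not even a cycle, since $\delta_0=1$. The vanishing for odd $\dim G$ comes solely from the parity clause of $D_H$.

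Two further steps need repair.

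First, Step~1 tacitly assumes that $F\cup\{G\}$ is nested. That gives $G\cap c=0$, which you need for $c^\perp/(c\oplus G)^\perp\cong V/G^\perp$. (Also, the lift of $x$ is literally the same generator, since $\operatorname{child}_F$ depends only on $F$.) For $\dim G\geq2$ the nestedness assumption can fail. Take $\G=\{V^*,G\oplus K,G,K\}$ with $\dim V=6$, $\dim G=\dim K=2$, and $F=\{V^*,K\}$. The generator $e(K^\perp)_0\otimes e(V/K^\perp)_0$ is not in $\mathcal U_{\setminus G}$. Yet $\gamma_G$ kills it, because $G\oplus K\in\G$ makes $F\cup\{G\}$ non-nested.

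So ``$D_{G^\perp}$ kills precisely $\mathcal U_{\setminus G}$'' is false. For the same reason the second arrow cannot be the identity under a degreewise identification $\operatorname{Th}_G\cong C\mathcal N(G^\perp,\G/G,\Dor)$. The repair is to add to $\mathcal U_{\setminus G}$ every generator on an $F$ for which $F\cup\{G\}$ is not nested. This is still a subcomplex, since enlarging a non-nested set keeps it non-nested.

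Second, your proposed subcomplex check runs in the wrong direction. Together with $D$, the map $\delta$ must raise $n=|F|-\sum i_G$, so $\delta(e_i)\in\Z e_{i-1}$. Hence $\delta$ \emph{increases} $-i_{m_G(F)}$, which is exactly why the lower bound defining $\mathcal U_{\setminus G}$ survives; a decrease would destroy it.

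The insertion case also needs an argument. When $D$ inserts $K'$ with $G\subseteq K'\subsetneq m$, the new minimal node containing $G$ is $K'$ with index $0$. The required inequality $0>\dim W_{F\cup\{K'\},K'}-\dim G-2$ is not automatic. It follows by combining the old bound with the nonvanishing range $-i<\dim W-\dim W_{F\cup\{K'\},K'}-1$ of that $D$-term, which together force $\dim W_{F\cup\{K'\},K'}\leq\dim G$.

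Throughout, $\dim m_G(F)$ in the definition of $\mathcal U_{\setminus G}$ must be read as $\dim W_{F,m_G(F)}$, as your parenthetical indicates.
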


%% file: blow-up.tex
\section{Blow-ups and motivic homotopy}\label{sec:blow-up}

The construction of $\PP(\G)$ by iterated blow-ups is the geometric
input for the calculation of its cellular $\A^1$-homology.  We first
record the motivic blow-up triangle in notation compatible with the
preceding section.

Let $X$ be a smooth $k$-scheme, let $i:Z\hookrightarrow X$ be a smooth
closed subscheme of pure codimension $c$, and let
\[
  p:\widetilde X:=\operatorname{Bl}_Z X\longrightarrow X
\]
be the blow-up.  Its exceptional divisor is
$E=p^{-1}(Z)\cong\PP(N_{Z/X})$.

\subsection{Purity and the blow-up triangle}

The square
\[
\begin{tikzcd}
  E \arrow[r,hook] \arrow[d] &
  \widetilde X \arrow[d,"p"]\\
  Z \arrow[r,hook] & X
\end{tikzcd}
\]
gives the canonical blow-up distinguished triangle
\cite[Proposition 4.13]{voevodsky1998}
\begin{equation}
  \widetilde C_*^{\A^1}(E)
  \longrightarrow
  \widetilde C_*^{\A^1}(Z)
  \oplus
  \widetilde C_*^{\A^1}(\widetilde X)
  \longrightarrow
  \widetilde C_*^{\A^1}(X)
  \longrightarrow
  \widetilde C_*^{\A^1}(E)[1].
  \label{eq:blowup-distinguished}
\end{equation}
Equivalently, the parallel maps in the square have equivalent
homotopy cofibers:
\begin{equation}
  \operatorname{cofib}(\widetilde X_+\to X_+)
  \simeq_{\A^1}
  \operatorname{cofib}(E_+\to Z_+).
  \label{eq:parallel-cofibers}
\end{equation}

\begin{proposition}\label{blowup}
There is a distinguished triangle in $D_{\A^1}(k)$
\begin{equation}
  \widetilde C_*^{\A^1}(\widetilde X)
  \longrightarrow
  \widetilde C_*^{\A^1}(X)
  \xrightarrow{\delta}
  \widetilde C_*^{\A^1}
  \bigl(\operatorname{cofib}(E_+\to Z_+)\bigr)
  \longrightarrow
  \widetilde C_*^{\A^1}(\widetilde X)[1].
  \label{eq:blowup-cofiber}
\end{equation}
Moreover, $\delta$ factors as
\begin{equation}
  \widetilde C_*^{\A^1}(X)
  \longrightarrow
  \widetilde C_*^{\A^1}
  \bigl(\operatorname{Th}(N_{Z/X})\bigr)
  \longrightarrow
  \widetilde C_*^{\A^1}
  \bigl(\operatorname{cofib}(E_+\to Z_+)\bigr).
  \label{eq:purity-factor}
\end{equation}

Suppose, in addition, that the four schemes in the blow-up square are
equipped with cellular filtrations preserved by all four maps, and that
the square at every filtration stage is again a blow-up square.  Put
\begin{equation}
  Q_*^{\mathrm{cell}}(E,Z)
  :=
  \operatorname{Cone}\left(
    \widetilde C_*^{\mathrm{cell}}(E)
    \longrightarrow
    \widetilde C_*^{\mathrm{cell}}(Z)
  \right).
  \label{eq:cellular-exceptional-cofiber}
\end{equation}
Then there is a distinguished triangle in $D(Ab_{\A^1}(k))$
\begin{equation}
  \widetilde C_*^{\mathrm{cell}}(\widetilde X)
  \longrightarrow
  \widetilde C_*^{\mathrm{cell}}(X)
  \xrightarrow{\delta_{\mathrm{cell}}}
  Q_*^{\mathrm{cell}}(E,Z)
  \longrightarrow
  \widetilde C_*^{\mathrm{cell}}(\widetilde X)[1].
  \label{eq:cellular-blowup-cofiber}
\end{equation}
With the filtrations induced on the quotient and the Thom space,
$\delta_{\mathrm{cell}}$ factors as
\begin{equation}
  \widetilde C_*^{\mathrm{cell}}(X)
  \longrightarrow
  \widetilde C_*^{\mathrm{cell}}
  \bigl(\operatorname{Th}(N_{Z/X})\bigr)
  \longrightarrow
  Q_*^{\mathrm{cell}}(E,Z).
  \label{eq:cellular-purity-factor}
\end{equation}
\end{proposition}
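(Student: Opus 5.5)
The plan is to derive the triangle \eqref{eq:blowup-cofiber} from the cofiber identification \eqref{eq:parallel-cofibers}, and then obtain the factorization through the Thom space from homotopy purity.

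\emph{Step 1: the triangle.} Apply $\widetilde C_*^{\A^1}$ to the cofiber sequence $\widetilde X_+\to X_+\to\operatorname{cofib}(\widetilde X_+\to X_+)$. This gives a distinguished triangle whose third term is $\widetilde C_*^{\A^1}(\operatorname{cofib}(\widetilde X_+\to X_+))$. By \eqref{eq:parallel-cofibers}, which comes from \cite[Proposition 4.13]{voevodsky1998}, this term is identified with $\widetilde C_*^{\A^1}(\operatorname{cofib}(E_+\to Z_+))$. One could also rotate \eqref{eq:blowup-distinguished} and use the octahedral axiom on the composite $E\to Z$, $E\to\widetilde X$. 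Either route gives \eqref{eq:blowup-cofiber}, with $\delta$ the composite of the cofiber map and this identification.

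\emph{Step 2: the factorization.} To obtain \eqref{eq:purity-factor}, factor the quotient $X_+\to X/(X\setminus Z)$. Since $p$ is an isomorphism over $X\setminus Z$, the map $\widetilde X_+\to X_+\to X/(X\setminus Z)$ factors through $\widetilde X/(\widetilde X\setminus E)$. Passing to cofibers, $\operatorname{cofib}(\widetilde X_+\to X_+)$ receives a map from $X_+$ that factors through
\[
  X/(X\setminus Z)\to \operatorname{cofib}\bigl(\widetilde X/(\widetilde X\setminus E)\to X/(X\setminus Z)\bigr).
\]
Excision identifies the latter cofiber with the original cofiber, because the open complements agree. Homotopy purity then identifies $X/(X\setminus Z)\simeq\operatorname{Th}(N_{Z/X})$ and $\widetilde X/(\widetilde X\setminus E)\simeq\operatorname{Th}(N_{E/\widetilde X})=\operatorname{Th}(\mathcal O(-1))$. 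Via the deformation to the normal cone, the latter cofiber becomes $\operatorname{cofib}(E_+\to Z_+)$, matching \eqref{eq:parallel-cofibers}. The main obstacle is checking that this purity identification is compatible with the equivalence of \cite{voevodsky1998}. My plan is to deform both squares simultaneously to the normal cone, so that everything reduces to the linear model $\operatorname{Bl}_0 N\to N$ over $Z$, where both identifications are tautological.

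\emph{Step 3: the cellular version.} Since all four maps preserve the filtrations, and each stage is again a blow-up square, Steps 1--2 apply at every stage $\Omega_i$ and to the subquotients. Cellular chains are functorial for filtration-preserving maps, so the square of cellular complexes $\widetilde C^{\mathrm{cell}}_*(E)\to\widetilde C^{\mathrm{cell}}_*(Z)$, $\widetilde C^{\mathrm{cell}}_*(\widetilde X)\to\widetilde C^{\mathrm{cell}}_*(X)$ commutes. Levelwise, the stage-$i$ cofiber identification \eqref{eq:parallel-cofibers} says that the total complex of this square is acyclic on Thom terms after passing to $\A^1$-homology of the subquotients. Hence $\operatorname{Cone}(\widetilde C^{\mathrm{cell}}_*(\widetilde X)\to\widetilde C^{\mathrm{cell}}_*(X))\simeq Q^{\mathrm{cell}}_*(E,Z)$, which gives \eqref{eq:cellular-blowup-cofiber}. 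The factorization \eqref{eq:cellular-purity-factor} follows by applying the same argument, using the filtration induced on $\operatorname{Th}(N_{Z/X})$ through the quotient $X/(X\setminus Z)$.
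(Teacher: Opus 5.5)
Your proposal is correct and follows the paper's route. The triangle is the cofiber form of Voevodsky's blow-up triangle, and $\delta$ factors through $X/(X\setminus Z)\simeq\operatorname{Th}(N_{Z/X})$ because $p$ is an isomorphism away from $E$. The cellular version comes from the stagewise blow-up squares: your degreewise short exact Mayer--Vietoris sequences of Thom terms play the role of the paper's morphism of exact couples.

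The compatibility you single out as the main obstacle is not needed for the factorization as stated. That only requires your excision argument showing that $X_+\to\operatorname{cofib}(\widetilde X_+\to X_+)$ factors through $X/(X\setminus Z)$. The deformation-to-the-normal-cone check is, however, what justifies the later reduction to $\A^c\setminus\{0\}\to\PP^{c-1}$ in Corollary~\ref{blowup-parity}.
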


\begin{proof}
The triangle is the cofiber form of
\eqref{eq:blowup-distinguished}.  Since $p$ is an
isomorphism over $X\setminus Z$, the map from $X$ to the cofiber
factors through
\[
  X/(X\setminus Z)
  \simeq_{\A^1}
  \operatorname{Th}(N_{Z/X})
\]
by homotopy purity \cite[Theorem 2.23]{MV99}.

For the cellular assertion, the stagewise blow-up squares give a
filtered form of \eqref{eq:blowup-distinguished}.  Passing to filtration
quotients and applying reduced $\A^1$-homology in the corresponding
cellular degrees gives a morphism of exact couples.  Its $E^1$-complex
is the cellular chain complex, and its cofiber sequence is
\eqref{eq:cellular-blowup-cofiber}.  Homotopy purity is compatible with
the induced filtrations, so the same construction applied to the
factorization through $X/(X\setminus Z)$ gives
\eqref{eq:cellular-purity-factor}.
\end{proof}

\subsection{The Milnor--Witt connecting class}

For $r\geq0$, put
\[
  r_\epsilon
  :=
  \sum_{j=0}^{r-1}\langle-1\rangle^j
  \in\KMW_0(k),
  \qquad
  0_\epsilon:=0.
\]

\begin{corollary}\label{blowup-parity}
On the local normal slice of codimension $c$, the connecting morphisms
in \eqref{eq:purity-factor} and \eqref{eq:cellular-purity-factor} are
multiplication by
\begin{equation}
  (c-1)_\epsilon\eta=\chi_2(c-1)\eta.
  \label{eq:blowup-parity}
\end{equation}
Since $(1+\langle-1\rangle)\eta=0$, this is $0$ when $c$ is odd and
is $\eta$ when $c$ is even.  Under complex Betti realization with
integral coefficients the connecting map is zero.  Under real Betti
realization it is zero in odd codimension and, in even codimension,
factors through multiplication by $2$, as in
\cite[Corollary 4.4]{Rains_2010}.
\end{corollary}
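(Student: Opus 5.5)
The plan is to reduce to the local model and compute directly. By Proposition~\ref{blowup}, the connecting morphism $\delta$ factors through the Thom space $\operatorname{Th}(N_{Z/X})$. The cofiber $\operatorname{cofib}(E_+\to Z_+)$ is local over $Z$. So on a normal slice, meaning a cohomologically trivial cell of $Z$ over which $N_{Z/X}$ is trivialized, it suffices to treat $Z=\operatorname{Spec}k$ and $X=\A^c$. There $E=\PP^{c-1}$, and the relevant map is
\[
  \A^c/(\A^c\setminus 0)\simeq \PP^c/\PP^{c-1}
  \longrightarrow \Sigma\PP^{c-1}_+/\Sigma(\mathrm{pt}_+),
\]
that is, the boundary map $S^{2c-1,c}\to\Sigma\PP^{c-1}$ in the cofiber sequence $\PP^{c-1}\to\PP^c\to\PP^c/\PP^{c-1}$. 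Only the top-cell component matters for the cellular statement. That component is the attaching map of the top cell of $\PP^c$ onto the top cell of $\PP^{c-1}$, namely $S^{2c-1,c}\to\PP^{c-1}\to\PP^{c-1}/\PP^{c-2}\simeq S^{2c-2,c-1}$, which is an element of $\pi_{0}$-stable degree $\KMW_{-1}(k)$. The cellular version \eqref{eq:cellular-purity-factor} then follows because the filtrations are compatible and the cellular differential is computed by exactly these top-cell compositions.

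The key computation identifies this class. I would use the standard description of the cellular complex of $\PP^n$ (Morel--Sawant): the differential $C_{i}\to C_{i-1}$ between consecutive cells $\KMW_i\to\KMW_{i-1}$ is multiplication by $\eta$ if $i$ is even and by $0$ if $i$ is odd. This is the source of $\mathbf H^{\mathrm{cell}}$ of $\PP^n$. To derive it intrinsically, I would write the attaching map as the composite $\A^c\setminus0\to\PP^{c-1}\to\PP^{c-1}/\PP^{c-2}$, which is the Hopf-type map $S^{2c-1,c}\to S^{2c-2,c-1}$ given by a $\Gm$-torsor quotient. Its stable class is computed by induction on $c$ using the join decomposition $\A^c\setminus0\simeq(\A^{c-1}\setminus0)*\Gm$. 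Each step contributes a swap of a $\Gm$-factor past the previous factors, i.e.\ a factor $\langle-1\rangle$, so the class is $\sum_{j=0}^{c-2}\langle-1\rangle^j\eta=(c-1)_\epsilon\eta$. The base case $c=2$ is the Hopf map $\eta$ itself, and $c=1$ gives $0$. The identity $(c-1)_\epsilon\eta=\chi_2(c-1)\eta$ then follows by pairing terms using $(1+\langle-1\rangle)\eta=0$, which is the relation $\eta(2+\eta[-1])=0$.

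Finally, the realizations. Complex realization sends $\eta$ to the stable Hopf map, which is zero on integral homology. Real realization sends $\eta$ to the degree $2$ map $S^1\to S^1$ (up to sign), and $\langle-1\rangle$ to $-1$. This gives $0$ for odd $c$ and multiplication by $2$ for even $c$, matching \cite[Corollary 4.4]{Rains_2010}.

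I expect the main obstacle to be the sign and unit bookkeeping in the inductive join computation. Each suspension coordinate permutation introduces $\langle-1\rangle$ rather than $-1$, and one must check that the identifications are compatible with the chosen orientation of $N_{Z/X}$. Any other choice of orientation changes the class only by a unit $\langle u\rangle$, and $\langle u\rangle\eta$ agrees with $\eta$ up to such units. If a direct argument is messy, I would instead cite the known cellular complex of $\PP^n$ and simply identify the top-cell component with that differential.
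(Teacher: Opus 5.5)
Your proposal is correct and takes essentially the same route as the paper. Both use the purity factorization of Proposition~\ref{blowup} to reduce to the normal-slice map $\A^c\setminus\{0\}\to\PP^{c-1}$, identify its top-cell attaching class as $(c-1)_\epsilon\eta$, collapse it using $(1+\langle-1\rangle)\eta=0$, and then realize. Your fallback of reading the class off the Morel--Sawant cellular complex of $\PP^c$ is more justification than the paper itself gives. One indexing slip: the Puppe boundary has source $\PP^c/\PP^{c-1}\simeq S^{2c,c}$, since it is the suspension of $S^{2c-1,c}\simeq\A^c\setminus\{0\}\to\PP^{c-1}$.
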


\begin{proof}
By Proposition \ref{blowup}, the calculation is local in the normal
bundle and reduces to the map from the punctured affine normal slice
to its projectivization,
\[
  \A^c\setminus\{0\}\longrightarrow\PP^{c-1}.
\]
The corresponding Milnor--Witt attaching class is
$(c-1)_\epsilon\eta$.  Pairing adjacent terms in
$(c-1)_\epsilon$ and using
$(1+\langle-1\rangle)\eta=0$ gives the parity statement.  Its real and
complex realizations recover the classical calculation in
\cite[Corollary 4.4]{Rains_2010}.
\end{proof}

\begin{remark}
Under real realization, inverting $\eta$ has the same effect on this
blow-up triangle as inverting $2$ in singular homology.
\end{remark}

\begin{remark}
Hennig computes coefficientwise cellular differentials by restricting
to regularly embedded rational curves that meet a boundary divisor
transversally \cite[Lemma~3.1.1]{Hennig2026CellularM0n}.  The normal-slice
calculation above is the universal form of that procedure: instead of
choosing enough curves to determine each residue matrix, it identifies
the attaching class before a coefficient sheaf is chosen.  In
particular, every such coefficientwise differential inherits the same
codimension parity.
\end{remark}

\subsection{The De Concini--Procesi blow-up}

\begin{corollary}\label{wonderful-blowup}
Let $G\in\G$, let $\G':=\G\setminus\{G\}$ be a building set, and
suppose that $G\notin\mathcal C_{\G'}$ and that $G$ is not maximal in
$\G$.  Then
\[
  p:\PP(\G)\longrightarrow\PP(\G')
\]
is the blow-up along
\[
  d'_G
  :=
  \PP(\G'|_G\oplus\G/G),
\]
with exceptional divisor
\[
  d_G
  :=
  \PP(\G|_G\oplus\G/G)
  \cong
  \PP(N_{d'_G/\PP(\G')}).
\]
For $A\in\mathcal C_{\G}$ with $G\nsubseteq A$, the blow-up map induces
\begin{equation}
  \widetilde C_*^{\mathrm{cell}}(\PP(\G))[A]
  \cong
  \widetilde C_*^{\mathrm{cell}}(\PP(\G'))[A].
  \label{eq:graded-away}
\end{equation}
If $G\subseteq A$, there is a distinguished triangle
\begin{equation}
  \widetilde C_*^{\mathrm{cell}}(d_G)[G\oplus A/G]
  \xrightarrow{\phi_G}
  \widetilde C_*^{\mathrm{cell}}(\PP(\G))[A]
  \longrightarrow
  \widetilde C_*^{\mathrm{cell}}(\PP(\G'))[A]
  \longrightarrow
  \widetilde C_*^{\mathrm{cell}}(d_G)[G\oplus A/G][1].
  \label{eq:graded-blowup}
\end{equation}
The connecting morphism is induced by
\begin{equation}
\begin{split}
  \widetilde C_*^{\mathrm{cell}}(\PP(\G'))
  &\longrightarrow
  \widetilde C_*^{\mathrm{cell}}
  \bigl(\PP(\G')/(\PP(\G')\setminus d'_G)\bigr)\\
  &\simeq
  \widetilde C_*^{\mathrm{cell}}
  \bigl(\operatorname{Th}(N_{d'_G/\PP(\G')})\bigr)
  \longrightarrow
  \widetilde C_{*-1}^{\mathrm{cell}}(d_G).
\end{split}
  \label{eq:graded-connecting}
\end{equation}
On a normal fiber the last arrow is induced by
$\A^c\setminus\{0\}\to\PP^{c-1}$.
\end{corollary}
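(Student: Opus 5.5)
The plan has three parts: the geometric blow-up identification, which is De Concini--Procesi; the graded statements, which come from compatibility with the idempotents $\pi_A^{[-1]}\pi_A$; and the identification of the connecting map, which is Proposition~\ref{blowup} combined with Corollary~\ref{blowup-parity}.

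\emph{Step 1: the blow-up.}
The assumptions on $G$ are that $G\notin\mathcal C_{\G'}$, that $\G'$ is again a building set, and that $G$ is not maximal. Under these, the standard inductive construction of wonderful models \cite{CP} applies in the section-space description $\PP(\G)=\Gamma(\G,\PP)$. Adjoining the node $G$ adds the factor $\PP_G=\PP(V/G^\perp)$ and imposes the incidence conditions along the arrows of the diagram. The forgetful map $p$ is then an isomorphism wherever the point of $\PP(\G')$ determines the coordinate in $\PP_G$. That fails exactly on the locus where every $G$-relevant linear form degenerates. Via the operad map of the decomposition section, this locus is identified with
\[
d'_G=\PP(\G'|_G\oplus\G/G)=\PP(\G'|_G)\times\PP(\G/G).
\]
Its fiber under $p$ is the full $\PP_G$, which is the projectivized normal direction $\PP(V/G^\perp)$. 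This identifies $d_G=\PP(\G|_G\oplus\G/G)$ with $\PP(N_{d'_G/\PP(\G')})$. I will check the universal property of the blow-up by a local computation: in coordinates adapted to $G^\perp$, the incidence conditions become the usual blow-up equations $x_iy_j=x_jy_i$.

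\emph{Step 2: cellular filtrations.}
The nested-set stratifications of $\PP(\G)$, $\PP(\G')$, $d_G$ and $d'_G$ are compatible with $p$. Each stratum of the form $p^{-1}(\text{stratum})$ is either an isomorphic copy of a stratum or the restriction of the projective bundle, and that restriction is again a blow-up square. Hence the cellular hypotheses of Proposition~\ref{blowup} hold. The proposition then gives the triangle \eqref{eq:cellular-blowup-cofiber} with $Q=\operatorname{Cone}(\widetilde C(d_G)\to\widetilde C(d'_G))$, together with the factorization \eqref{eq:cellular-purity-factor}. Since $d_G\to d'_G$ is a projective bundle of rank $c-1$, with $c=\dim G$, the cone $Q$ is the shifted cellular complex of $d_G$ modulo the pullback of $d'_G$. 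Rewriting the triangle as a cofiber of $\phi_G$ gives \eqref{eq:graded-blowup}. The shift in \eqref{eq:graded-connecting} comes from the cone.

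\emph{Step 3: the grading.}
The projections $\pi_A$ commute with $p$ whenever $G\nsubseteq A$. In that case $\G|_A=\G'|_A$, so $\pi_A$ factors through $\PP(\G')$. The sections $\pi_A^{[-1]}$ lift as well, because the chosen point of $\PP(\G/A)$ can be taken off $d'_G$. The idempotents therefore commute with every map in the triangle.

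For $G\nsubseteq A$, the exceptional term carries only labels containing $G$, so its $[A]$-summand vanishes. This gives \eqref{eq:graded-away}.

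For $G\subseteq A$, the operad decomposition of $d_G$ sends the label $A$ to $G\oplus A/G$. Applying the $[A]$-projector to the triangle then gives \eqref{eq:graded-blowup}. Finally, the last arrow in \eqref{eq:graded-connecting} is computed on a normal fiber by Corollary~\ref{blowup-parity} as the map $\A^c\setminus\{0\}\to\PP^{c-1}$.

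\emph{Main obstacle.}
I expect Step 3, the compatibility of the stable idempotents with the blow-up triangle and the matching of labels $[A]\leftrightarrow[G\oplus A/G]$, to be the hardest part. The sections $\pi_A^{[-1]}$ are defined only up to homotopy. My plan is to choose the basepoint of $\PP(\G/A)$ generically, so that it lies off $d'_G$ and off all strata indexed by elements not contained in $A$. After that, I will verify that the resulting maps are cellular, so that the idempotents act on the cellular exact couple and not only in the homotopy category.
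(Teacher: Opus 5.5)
Your overall route is the paper's. It uses the blow-up geometry, splits the cellular triangle of Proposition~\ref{blowup} by the commuting idempotents, and splits the cone of $d_G\to d'_G$ before applying the purity factorization. Two steps, however, do not work as written.

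\textbf{The fiber of $p$ and the value of $c$.} In Step~1 you claim that the fiber of $p$ over $d'_G$ is all of $\PP_G=\PP(V/G^\perp)$, and in Step~2 that $c=\dim G$. Both hold only when $\G'|_G$ has no members of dimension at least $2$, for instance when $G$ is minimal, which is the case used in Theorem~\ref{nested-cellular}. In general the fiber is the fiber of $\PP(\G|_G)\to\PP(\G'|_G)$.

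For example, take $\dim V=4$ and $\G=\{V^*,G,H\}$ with $H\subsetneq G$, $\dim H=2$, $\dim G=3$; all hypotheses of the statement hold. Then $\PP(\G')=\operatorname{Bl}_{\PP(H^\perp)}\PP^3$, and $d'_G$ is the exceptional $\PP^1$ lying over the point $\PP(G^\perp)$. Its codimension is $c=2$, not $3$, and the fibers of $d_G\to d'_G$ are copies of $\PP^1$, not $\PP_G\cong\PP^2$.

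So your local computation in coordinates adapted to $G^\perp$ only covers the minimal case. In general the center is a dominant transform, and you need \cite[Proposition 2.10]{Rains_2010}, which is what the paper cites.

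\textbf{Splitting the cone.} In Step~2 you identify $Q$ with ``$d_G$ modulo the pullback of $d'_G$'' because $d_G\to d'_G$ is a projective bundle. Cellular $\A^1$-homology is covariant, so a projective-bundle structure alone gives no map $\widetilde C_*^{\mathrm{cell}}(d'_G)\to\widetilde C_*^{\mathrm{cell}}(d_G)$ that splits the cone.

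The missing input, which the paper uses, is the homotopy section of $d_G\to d'_G$ from \cite[Corollary 2.8]{Rains_2010}. It is an instance of the sections $\pi^{[-1]}$ that you invoke only in Step~3. For minimal $G$ it is simply a rational point of the factor $\PP(V/G^\perp)$ in $d_G=\PP(V/G^\perp)\times d'_G$.

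With this section the cone becomes $\bigoplus_{B\in\mathcal C_{\G/G}}\widetilde C_*^{\mathrm{cell}}(d_G)[G\oplus B]$, up to a shift by one. This settles the label matching $[A]\leftrightarrow[G\oplus A/G]$ that you flagged as the main obstacle. It is also what justifies your Step~3 claim that the exceptional term carries only labels containing $G$. Before the splitting, the cone still involves $\widetilde C_*^{\mathrm{cell}}(d'_G)$, whose labels are proper subspaces of $G$ plus elements of $\mathcal C_{\G/G}$, and these do not contain $G$.
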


\begin{proof}
The description of $p$, $d'_G$, and $d_G$ is
\cite[Proposition 2.10]{Rains_2010}.  The grading projectors commute with
the blow-up maps and the cellular filtrations, so the cellular triangle
of Proposition \ref{blowup} splits into its $A$-graded pieces.  If
$G\nsubseteq A$, the $A$-piece is disjoint from
the center and gives \eqref{eq:graded-away}.  If $G\subseteq A$, the
corresponding piece of the blow-up triangle is
\eqref{eq:graded-blowup}.

By \cite[Corollary 2.8]{Rains_2010}, the map $d_G\to d'_G$ has a
homotopy section.  Hence its mapping-cone sequence splits and
\[
  \widetilde{\mathbf H}^{\A^1}_{r+1}
  \bigl(\operatorname{cofib}(d_G\to d'_G)\bigr)
  \cong
  \bigoplus_{B\in\mathcal C_{\G/G}}
  \widetilde{\mathbf H}^{\A^1}_r(d_G)[G\oplus B].
\]
Together with the purity factorization \eqref{eq:purity-factor}, this
identifies the connecting morphism with
\eqref{eq:graded-connecting}.
\end{proof}

%% file: cellular.tex
\section{Cellular \texorpdfstring{$\A^1$}{A1}-structure for
\texorpdfstring{$\PP(\G)$}{P(G)}}

Nested sets give a natural stratification of $\PP(\G)$.

For a nested set $F\subseteq\G$, let
\[
  \phi_F:
  \prod_{G\in F}
  \PP\bigl((\G|_G)/\operatorname{child}_F(G)\bigr)
  \longrightarrow
  \PP(\G)
\]
be the associated operad morphism, and let $d_F$ denote its image.
Suppose
\[
  \operatorname{root}(\G)
  =
  G_1\oplus\cdots\oplus G_s
\]
has dimension $d$.  For $0\leq i\leq d-s=\dim\PP(\G)$, set
\begin{equation}
  X^i
  :=
  \bigcup_{\substack{
    \operatorname{root}(F)=\operatorname{root}(\G)\\
    |F|=d-i
  }}
  d_F.
  \label{eq:forest-filtration}
\end{equation}

\begin{theorem}\label{cellular-filtration}
Suppose every chain in $\mathcal C_{\G}$ extends to a complete flag,
with every successive codimension equal to $1$.  Then
\[
  \widetilde{\mathbf H}^{\A^1}_*
  (X^{n+1}/X^n)
\]
is free and concentrated in degree $n+1$.  The connecting maps
\[
  \widetilde{\mathbf H}^{\A^1}_{n+1}
  (X^{n+1}/X^n)
  \longrightarrow
  \widetilde{\mathbf H}^{\A^1}_{n}
  (X^n/X^{n-1})
\]
form the cellular $\A^1$-chain complex
$C_*^{\mathrm{cell}}(\PP(\G))$.
\end{theorem}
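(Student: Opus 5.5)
We first identify the filtration pieces.  By \cite[Proposition 2.10]{Rains_2010}, $d_F$ is the image of the operad morphism $\phi_F$, which is a closed embedding.  Its open stratum
\[
  d_F^\circ:=d_F\setminus\bigcup_{F'\supsetneq F}d_{F'}
\]
is isomorphic to $\prod_{G\in F}U_{(\G|_G)/\operatorname{child}_F(G)}$, a product of arrangement complements.  The strata with $|F|=d-i$ have codimension equal to the number of nodes added beyond the maximal nested sets.  So $X^{n+1}\setminus X^n$ is the disjoint union of the strata $d_F^\circ$ with root $\operatorname{root}(\G)$ and $|F|=d-n-1$.  This union is smooth and closed in the open complement, with normal bundle given by the sum of the line bundles of the boundary divisors $d_G$ ($G\notin F$) meeting it.  Indeed, the intersections $d_F=\bigcap_{G\in F}d_G$ are transversal (normal crossings).

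Homotopy purity \cite[Theorem 2.23]{MV99} then gives
\[
  X^{n+1}/X^n\simeq\bigvee_F\operatorname{Th}(\nu_F).
\]
It therefore suffices to show two things for each stratum $d_F^\circ$: that it is cohomologically trivial, and that $\nu_F$ is trivializable.  Together these give
\[
  \widetilde{\mathbf H}^{\A^1}_*(\operatorname{Th}(\nu_F))\cong\KMW_{n+1}\otimes\mathbf Z_{\A^1}[d_F^\circ],
\]
which is free and concentrated in degree $n+1$, as in \eqref{eq:oriented-cellular-terms}.

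\textbf{The main obstacle is cohomological triviality of the strata.}  Here the flag hypothesis enters.  Each factor $(\G|_G)/\operatorname{child}_F(G)$ is a building set whose lattice $\mathcal C$ is an interval of $\mathcal C_\G$.  By the hypothesis, every chain in that interval refines to a complete flag with codimension-one steps, so the interval is as "thin" as possible.  I would argue by induction on dimension that such a complement is an iterated $\Gm$-fibration, or an iterated complement of a point in $\A^1$.  Concretely: choose a complete flag, project along the coordinate of the last step, and check that the fiber is $\A^1$ minus the points cut out by the hyperplanes through the one-dimensional layer.  The maximality of chains is what forces the fiber to be $\A^1$ minus at most one point, hence $\A^1$ or $\Gm$ up to translation, with trivial torsor structure.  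An iterated Zariski-locally-trivial fibration with fibers $\A^1$ or $\Gm$ over a cohomologically trivial base is again cohomologically trivial.  This uses the long exact Gysin sequence and strict $\A^1$-invariance of $M$, together with $H^{>0}(\Gm\times Y,M)=0$ via $M_{-1}$.

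The same coordinates trivialize the normal bundle.  Each normal line bundle $\mathcal O(d_G)|_{d_F^\circ}$ is pulled back from a projective factor $\PP_G$, where it is $\mathcal O(-1)$.  On the complement, a nonvanishing linear form in the chosen coordinates trivializes it.  If this direct fibration argument fails, I would instead reduce to products of torus-type complements by the blow-up induction of Corollary \ref{wonderful-blowup}.

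Finally, the connecting maps of the cofiber sequences
\[
  X^n/X^{n-1}\to X^{n+1}/X^{n-1}\to X^{n+1}/X^n
\]
assemble, by the exact-couple construction recalled in the cellular conventions, into a complex whose terms are the groups above.  Since $\Omega_iX:=\PP(\G)\setminus X^{\,\dim\PP(\G)-i-1}$ is a cellular structure in the sense of the introduction, this complex is by definition $C_*^{\mathrm{cell}}(\PP(\G))$, with the augmentation coming from the zero-cell as in \eqref{eq:cellular-augmentation-splitting}.
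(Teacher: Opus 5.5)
Your proposal has a genuine gap at the step you yourself flag as the main obstacle, because it rests on a false claim. The flag hypothesis does not force the fibers of a coordinate projection to be $\A^1$ minus at most one point. The braid building set $\overline A_2$ satisfies the hypothesis, since its lattice is the partition lattice. Yet the open stratum of $\PP(\overline A_2)\cong\overline{\mathcal M}_{0,4}$ is $\PP^1$ minus three points, and for $\overline A_{m-1}$ the forgetful projections have fibers $\A^1$ minus $m-1$ points. For arrangements that are not fiber-type there is no such tower at all.

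What the hypothesis actually gives is this: every nonzero element of each interval $[\operatorname{child}_F(G),G]$ contains a one-dimensional element, which is then a member of the factor building set. Hence each open stratum is a product of complements of projective \emph{hyperplane} arrangements, and each positive-dimensional factor contains a boundary hyperplane. Sending one such hyperplane in each factor to infinity identifies $d_F^\circ$ with the complement $U_F$ of an affine hyperplane arrangement in $\A^r$. Without the hypothesis, strata such as $\PP^{c-1}$ occur and the conclusion fails. The paper then applies Peng's Tate decomposition $\widetilde M(U_F)\simeq\bigoplus_j\mathbf 1(a_j)[a_j]$. If you want cohomological triviality directly, argue by deletion--restriction on the number of hyperplanes, using the Gysin sequence $H^q(Y',M)\to H^q(Y,M)\to H^q(H\cap Y',M_{-1})$, rather than a fibration argument. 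Triviality of the normal bundle then follows from $\operatorname{Pic}(U_F)=0$. Note also that $\nu_F$ is the sum over the non-root $G\in F$, not over $G\notin F$.

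The second problem is that homotopy purity does not compute $X^{n+1}/X^n$. The $X^i$ are closed and generally singular, whereas purity describes $\Omega/(\Omega\setminus Z)$ for $\Omega$ open and $Z$ smooth. Your own final paragraph exposes the mismatch. The strata with $|F|=d-n-1$ have dimension $n+1$ and codimension $|F|-s=\dim\PP(\G)-n-1$. So with your $\Omega_\bullet$, the Thom spaces $\operatorname{Th}(\nu_F)$ have homology in degree $\dim\PP(\G)-n-1$, not $n+1$, and the asserted $\KMW_{n+1}\otimes\mathbf Z_{\A^1}[d_F^\circ]$ uses the wrong rank. Once repaired, your argument yields the codimension-indexed Morel--Sawant complex of the open filtration, not the stated concentration for the closed quotients.

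The paper proves the statement as written via localization for compactly supported motives. The quotient $X^{n+1}/X^n$ splits into the pairs $(d_F,d_F\cap X^n)$, whose motives are $M^c(U_F)$. Milnor--Witt duality then gives $M^c(U_F)\simeq\widetilde M(U_F)^\vee(r)[2r]\simeq\bigoplus_j\mathbf 1(r-a_j)[2r-a_j]$, and every summand lies in degree $r=n+1$. Some such duality step is needed to pass between the two indexings.
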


\begin{proof}
For nested sets $F$ and $F'$, the boundary strata satisfy
\[
  d_F\cap d_{F'}
  =
  \begin{cases}
    d_{F\cup F'},&F\cup F'\text{ is nested},\\
    \varnothing,&\text{otherwise}.
  \end{cases}
\]
Consequently, the relative term for two consecutive stages of
\eqref{eq:forest-filtration} is the direct sum of the relative terms
$\bigl(d_F,d_F\cap X^n\bigr)$ with $|F|=d-n-1$.  Pullback along
$\phi_F$ reduces each term to the corresponding boundary pair in a
product of smaller wonderful models.  Put $r=n+1$, the total dimension
of this product.

As in the proof of \cite[Theorem 5.1]{Rains_2010}, the complete-flag
hypothesis implies that every indecomposable projective factor contains
a boundary hyperplane.  The iterated blow-ups defining the smaller
wonderful models have centers contained in the boundary.  Hence their
open complement is unchanged, and localization identifies the motive
of the boundary pair with $M^c(U_F)$, where $U_F$ is the complement of
a projective hyperplane arrangement in a product of projective spaces.
Choose one boundary hyperplane in each factor as the hyperplane at
infinity.  Then $U_F$ is the complement of a finite affine hyperplane
arrangement in $\A^r$.

By \cite[Proposition 3.1]{Peng2023}, its Milnor--Witt motive has a
finite Tate decomposition
\[
  \widetilde M(U_F)
  \simeq
  \bigoplus_{j\in J_F}\mathbf 1(a_j)[a_j].
\]
Since $U_F$ is smooth of dimension $r$ with trivial tangent bundle,
Milnor--Witt duality gives
\[
  M^c(U_F)
  \simeq
  \widetilde M(U_F)^\vee(r)[2r]
  \simeq
  \bigoplus_{j\in J_F}
  \mathbf 1(r-a_j)[2r-a_j],
\]
with the intrinsic determinant twist recorded by
$\operatorname{Dor}$.  Every summand has cellular degree
$(2r-a_j)-(r-a_j)=r$.  Thus the reduced cellular $\A^1$-homology of
each relative term is free and concentrated in degree $r=n+1$.
The standard exact-couple construction gives the stated differential;
compare \cite[Section 2.3]{MOREL2023109346}.
\end{proof}

\begin{remark}
This replaces Rains's decomposition into real chambers by its
Milnor--Witt motivic analogue.  It is a statement about cellular
$\A^1$-homology and does not assert an unstable $\A^1$-equivalence
with a wedge of motivic spheres.

For the braid building set, the filtration specializes to the boundary
filtration of $\overline{\mathcal M}_{0,N}$ by the number of nodes.  Its
open strata are disjoint unions of products of spaces $M_{0,a}$, hence
of hyperplane-arrangement complements, exactly as in
\cite[Section~3.2.3]{Hennig2026CellularM0n}.  The distinction is that
Theorem \ref{cellular-filtration} constructs the universal cellular
$\A^1$-complex, whereas the latter work evaluates the filtration after
fixing a coefficient sheaf.
\end{remark}

Write $\mathrm{Ch}^+(\mathbf Z)$ for the category of chain complexes of
free abelian groups concentrated in strictly positive degrees.  For
$C_*\in\mathrm{Ch}^+(\mathbf Z)$, define its $\eta$-twist by
\[
  C\{\eta\}_i=C_i\otimes\KMW_i,
  \qquad
  \partial\{\eta\}_i=\eta\partial_i
  \quad(i>0).
\]
Thus $C\{\eta\}$ has no degree-zero term.  For a connected cellular
scheme, the augmentation summand $\mathbf Z$ in degree zero is added
separately.  This agrees with the oriented cellular complex of
projective space in \cite[Corollary 2.51]{MOREL2023109346}: its last
term is $\mathbf Z$, whereas its reduced positive-degree complex is an
$\eta$-twist.  After inverting $\eta$, one has
\[
  H_i(C\{\eta\})[\eta^{-1}]
  =H_i(C)\otimes \mathbf W(k)\eta^{-i}.
\]
After discarding $\eta$-torsion, one has, for $i>0$,
\[
  \eta H_i(C\{\eta\})=H_i(C)\otimes \mathbf I^i(k).
\]

For a vector space $V$, choose a noncanonical quasi-isomorphism from
the derived orientation complex to the reduced cellular chain complex
of projective space.  Put
$\widehat{\Dor}(V)=\Dor(V)[\dim V-1]$ and write this choice as
\[
  \omega(V):\widehat{\Dor}(V)\{\eta\}
  \xr{qis}
  \widetilde{C}^{cell}_{*}(\PP(V))
  =C^{cell}_{*}(\PP(V))/C^{cell}_{*}(pt).
\]
The main point is that this orientation can be extended from
projective space to every wonderful model.

\begin{theorem}
  \label{nested-cellular}
  Assume that $\operatorname{root}(\G)=V^*$ is indecomposable.  The
  orientation $\omega(V)$ extends to a chain map
  \[
    \omega_{\G}(V):
    C\mathcal{N}(V,\G,\Dor)\{\eta\}
    \longrightarrow
    C^{cell}_*(\PP(\G)).
  \]
  Moreover, it induces a quasi-isomorphism on the root summand:
  \[
    C\mathcal{N}(V,\G,\Dor)\{\eta\}
    \xr{qis}
    C^{cell}_*(\PP(\G))[V].
  \]
\end{theorem}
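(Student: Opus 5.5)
We will argue by induction on the number of elements of $\G$, building $\PP(\G)$ from $\PP(V)$ by the De Concini--Procesi sequence of blow-ups. We add the elements of $\G\setminus\{V^*\}$ in an order compatible with inclusion, with larger elements first, so that at each stage the newly added $G$ is minimal among the elements already present. The base case is the building set $\{V^*\}$. There $\PP(\G)=\PP(V)$, and the nested-set complex with root $V^*$ is $\widehat{\Dor}(V)$, so the chosen orientation $\omega(V)$ is the required quasi-isomorphism onto the reduced complex. The root summand $[V]$ is exactly the reduced part here, since the only smaller element of $\mathcal C_{\G}$ is $0$, which carries the augmentation.

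For the inductive step, write $\G=\G'\sqcup\{G\}$ with $G$ minimal, and assume $\omega_{\G'}(V)$ and all maps $\omega_{\mathcal H}(W)$ for the smaller building sets $\G|_G$ and $\G/G$ have already been constructed. We then compare two triangles. On the combinatorial side, the $\eta$-twist of the exact sequence \eqref{eq:absolute-nested-blowup} gives
\[
B_G\{\eta\}\to C\mathcal N(V,\G,\Dor)\{\eta\}\to C\mathcal N(V,\G',\Dor)\{\eta\}.
\]
On the geometric side, Corollary~\ref{wonderful-blowup} gives the triangle \eqref{eq:graded-blowup} with $A=V^*$. Its fibre term is $\widetilde C^{\mathrm{cell}}_*(d_G)[G\oplus V^*/G]$. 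Since $d_G\cong\PP(\G|_G)\times\PP(\G/G)$, a Künneth identification expresses this term as the tensor product of the root summands of the two factors. Applying the inductive hypothesis to both factors gives a quasi-isomorphism from $B_G\{\eta\}$ to this term. Here the twist of a tensor product is the tensor product of twists, up to the Koszul-type signs already recorded in $\Dor$.

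To produce $\omega_{\G}(V)$ and show it is a quasi-isomorphism, it suffices to check that the square formed by the two connecting morphisms commutes in the derived category. The combinatorial connecting map is $\gamma_G$ of Lemma~\ref{AbsBlUp}. The geometric one is \eqref{eq:graded-connecting}. Once the square commutes, the axioms of a triangulated category supply a map on the middle terms. The five lemma, applied to the homology sheaves in $Ab_{\A^1}(k)$, then shows that this map is a quasi-isomorphism. Because the connecting square actually commutes on the nose at chain level, the cone construction gives an honest chain map extending $\omega_{\G'}(V)$, and hence $\omega(V)$.

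The main obstacle is checking that the connecting square commutes. Lemma~\ref{AbsGysin} factors $\gamma_G$ through the Thom complex $\operatorname{Th}_G$, followed by multiplication by $\chi_2(\dim G)$. Geometrically, \eqref{eq:graded-connecting} factors through the Thom space of $N_{d'_G/\PP(\G')}$, followed by the normal-slice map $\A^c\setminus\{0\}\to\PP^{c-1}$, where $c=\dim G$ is the codimension of $d'_G$. Corollary~\ref{blowup-parity} shows that this normal-slice map is multiplication by $\chi_2(c-1)\eta$. Two matches must then be established. First, the purity map $\widetilde C^{\mathrm{cell}}(\PP(\G'))\to\widetilde C^{\mathrm{cell}}(\operatorname{Th})$ must correspond under $\omega_{\G'}$ to the quotient by $\mathcal U_{\setminus G}$. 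We would check this stratum by stratum: a stratum $d_F$ meets $d'_G$ precisely according to the inequality on $i_{m_G(F)}$, and this comparison uses Theorem~\ref{cellular-filtration}. Second, the scalar coefficients must agree after twisting. The factor $\eta$ is absorbed by the $\eta$-twist, because the map lowers degree by one. The parity $\chi_2(c-1)$ must correspond to the factor $\chi_2(\dim G)$ in Lemma~\ref{AbsGysin}; the shift by one comes from $\widehat{\Dor}(V/G^\perp)=\Dor[\dim G-1]$, which should convert one parity into the other. This is the step I am least sure of. I would test it first on a single blow-up of $\PP^2$ and of $\PP^3$ at a point before running the general sign bookkeeping.
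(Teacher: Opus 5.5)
Your outline matches the paper's: induction along the De Concini--Procesi blow-up order with $G$ minimal, the $\eta$-twist of \eqref{eq:absolute-nested-blowup} compared with \eqref{eq:graded-blowup} for $A=V^*$, the induction hypothesis on the outer terms (with $\PP(\G|_G)=\PP(V/G^\perp)$ as base case), and compatibility of the connecting maps through the Thom complex. But that compatibility is the entire content of the inductive step. You do not establish it, and the mechanism you propose cannot work.

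Since $\chi_2(\dim G)=1-\chi_2(\dim G-1)$, exactly one of the two coefficients is $1$. A regrading by $\dim G-1$ only introduces signs $\pm1$, so it can never turn a zero map into multiplication by $\eta$. Read literally, the two statements are incompatible. What you need is the assertion the paper's proof actually uses: the second arrow of Lemma~\ref{AbsGysin}, from $\operatorname{Th}_G\otimes e(V/G^\perp)_0$ to $C\mathcal N(G^\perp,\G/G,\Dor)\otimes\widehat{\Dor}(V/G^\perp)$, carries the same class $\eta\chi_2(\dim G-1)$ as the normal slice $\A^c\setminus\{0\}\to\PP^{c-1}$. Both horizontal arrows in the paper's second diagram carry this label, even though Lemma~\ref{AbsGysin} is printed with $\chi_2(\dim G)$.

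Your proposed tests would not detect the issue. For a point blow-up, $G^\perp$ is a line, so $\Dor(G^\perp)=0$ and $B_G=0$; both connecting maps then vanish for trivial reasons. The decisive test is $\operatorname{Bl}_{\PP^1}\PP^3$, where $\dim G=2$. Its real locus is an $\mathbb{RP}^2$-bundle over $S^1$, with $H_2=\Z/2$ and $H_3=0$. This forces the root-summand connecting coefficient to realize to $\pm2$, i.e.\ to be $\eta$; coefficient $0$ would leave $H_3=\Z$.

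The Thom-complex identification is also not secured by your plan. The inequality defining $\mathcal U_{\setminus G}$ constrains the cell index $i_{m_G(F)}$ inside the projective factor of the node $m_G(F)$, not the stratum $d_F$. Moreover, the cells of Theorem~\ref{cellular-filtration} are Tate summands of hyperplane-complement pieces and do not match the generators $e(W)_i$ one-to-one. That theorem also needs the complete-flag hypothesis, which already fails for $\{V^*,H^\perp\}$ once $\dim V\geq3$.

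The paper needs no cell matching. It builds the dashed Thom-complex map by the same two-triangle induction, removing a further minimal element $G_0$ of $\G'$. The base case is Example~\ref{egPP}, where the Thom complex is $\widetilde C^{cell}_*(\PP(V))/\widetilde C^{cell}_*(\PP^{c-1})$.

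Separately, you do not need on-the-nose commutativity to get an honest chain map. Each term of $C\mathcal N(V,\G,\Dor)\{\eta\}$ is a finite sum of copies of $\KMW_i$ with $i\geq1$. These are projective in $Ab_{\A^1}(k)$, because $\operatorname{Hom}(\KMW_i,M)\cong M_{-i}(k)$ is exact in $M$. Hence any derived-category map out of this bounded complex is represented by a chain map.
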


\begin{proof}
The wonderful model $\PP(\G)$ is obtained from projective space by a
sequence of blow-ups.  It therefore suffices to compare the two chain
complexes at a single blow-up step, and we argue by induction on the
cardinality of the building set.

Let $\G'$ be a building set and let $G\subset V^*$ satisfy
$G\notin\mathcal C_{\G'}$.  Suppose
$\G=\G'\sqcup\{G\}$ is again a building set and that $G$ is minimal in
$\G$.  Lemma \ref{AbsBlUp} and Corollary \ref{wonderful-blowup} give
compatible triangles
% https://q.uiver.app/#q=WzAsOCxbMCwwLCJcXGJ1bGxldCJdLFsxLDAsIlxcYnVsbGV0Il0sWzIsMCwiXFxidWxsZXQiXSxbMCwxLCJcXGJ1bGxldCJdLFsxLDEsIlxcYnVsbGV0Il0sWzIsMSwiXFxidWxsZXQiXSxbMywxXSxbMywwXSxbMCwxXSxbMSwyXSxbMCwzLCJxaXMiLDJdLFszLDRdLFsxLDQsIiIsMix7InN0eWxlIjp7ImJvZHkiOnsibmFtZSI6ImRhc2hlZCJ9fX1dLFsyLDUsInFpcyIsMl0sWzQsNV0sWzUsNiwiXFxiZXRhX0ciLDJdLFsyLDcsIlxcZXRhIFxcZ2FtbWFfRyJdXQ==
\[\resizebox{\textwidth}{!}{$\begin{tikzcd}[ampersand replacement=\&,cramped]
  C\cN(G^\perp,\G|_{G^\perp},\Dor)\otimes \widehat{\Dor}(V/G^\perp)\{\eta\} \& C\cN(V,\G,\Dor)\{\eta\}\& C\cN(V,\G',\Dor)\{\eta\} \& {} \\
  C^{cell}_*(\PP(\G|_{G^\perp}))[{G^\perp}]\otimes \widetilde{C}^{cell}_{*}(\PP(V/G^\perp)) \& C^{cell}_*(\PP(\G))[V] \& C^{cell}_*(\PP(\G'))[V] \& {}
  \arrow[from=1-1, to=1-2]
  \arrow["qis"', from=1-1, to=2-1]
  \arrow[from=1-2, to=1-3]
  \arrow[dashed, from=1-2, to=2-2]
  \arrow["{\eta \gamma_G}", from=1-3, to=1-4]
  \arrow["qis"', from=1-3, to=2-3]
  \arrow[from=2-1, to=2-2]
  \arrow[from=2-2, to=2-3]
  \arrow["{\beta_G}"', from=2-3, to=2-4]
\end{tikzcd}$}\]
The end vertical arrows are quasi-isomorphisms by the induction
hypothesis.  Thus the dashed middle arrow exists and is a
quasi-isomorphism once the connecting morphisms are compatible.

By Lemma \ref{AbsGysin} and Corollary \ref{wonderful-blowup}, the
connecting morphisms are both obtained from the Thom complex and from
multiplication by the same Milnor--Witt class:
% https://q.uiver.app/#q=WzAsNixbMSwwLCJcXGJ1bGxldCJdLFsyLDAsIlxcY2RvdHMiXSxbMSwxLCJcXGJ1bGxldCJdLFsyLDEsIlxcY2RvdHMiXSxbMCwxLCJcXGJ1bGxldCJdLFswLDAsIlxcYnVsbGV0Il0sWzAsMSwiXFxldGFcXGNoaV8yKFxcZGltIEcpIl0sWzAsMiwiIiwyLHsic3R5bGUiOnsiYm9keSI6eyJuYW1lIjoiZGFzaGVkIn19fV0sWzEsMywicWlzIiwyXSxbMiwzLCJcXGV0YVxcY2hpXzIoXFxkaW0gRykiLDJdLFs0LDJdLFs1LDBdLFs1LDQsInFpcyIsMl1d
\[\begin{tikzcd}[ampersand replacement=\&,cramped]
  C\cN(V,\G',\Dor)\{\eta\} \& \mathrm{Th}_{G}(G^\perp,\G'|_{G^\perp},\Dor)\{\eta\} \& \cdots \\
  C^{cell}_*(\PP(\G')) \& C^{cell}_*(\mathrm{Th}_N(\PP(\G'|_{G^\perp}))) \& \cdots
  \arrow[from=1-1, to=1-2]
  \arrow[from=1-1, to=2-1]
  \arrow["{\eta\chi_2(\dim G-1)}", from=1-2, to=1-3]
  \arrow[dashed, from=1-2, to=2-2]
  \arrow["qis"', from=1-3, to=2-3]
  \arrow[from=2-1, to=2-2]
  \arrow["{\eta\chi_2(\dim G-1)}"', from=2-2, to=2-3]
\end{tikzcd}\]
It remains to construct the dashed Thom-complex map
\[
  \mathrm{Th}_{G}(G^\perp,\G'|_{G^\perp},\Dor)\{\eta\}
  \longrightarrow
  C^{cell}_*(\mathrm{Th}_N(\PP(\G'|_{G^\perp})))[V].
\]
This is proved by the same induction.  The initial case is Example
\ref{egPP}; for the induction step, remove another minimal element
$G_0$ from $\G'$ and apply the same pair of triangles.
\end{proof}

\begin{remark}\label{remark:twisted-comparison-preview}
Theorem \ref{nested-cellular} is the untwisted universal comparison used
throughout the paper.  Its proof also admits a coefficientwise extension,
but one must retain the two faces of every projective normal direction
before taking their signed sum.  A line-bundle twist can change that sum
from zero to $\eta$, so it is not obtained by tensoring the already
collapsed complex with a constant rank-one module.  For every building
set satisfying the cellularity hypothesis, the resulting coefficient
system $\Dor_L$ and the twisted comparison are constructed in Definition
\ref{twisted-derived-orientation} and Theorem
\ref{twisted-cellular-comparison}; they are then specialized to the
braid building set.  We postpone this extension to the examples so that
the untwisted comparison remains the main result.
\end{remark}

Applying the idempotent decomposition and the identification
\[
  C^{cell}_*(\PP(\G/W))[V/W]\cong C^{cell}_*(\PP(\G))[V/W]
\]
yields the full cellular chain complex.

\begin{corollary}\label{total-cellular}
Put
\[
  C\mathcal N^+(\G,\operatorname{Dor})
  :=
  \bigoplus_{\substack{W\in\mathcal S_{\G}\\W\neq V}}
  C\mathcal N(V/W,\G/W,\operatorname{Dor})
  \in\mathrm{Ch}^+(\mathbf Z).
\]
There are quasi-isomorphisms
\begin{equation}
  C\mathcal N^+(\G,\operatorname{Dor})\{\eta\}
  \xrightarrow{\ \simeq\ }
  \widetilde C_*^{\mathrm{cell}}(\PP(\G))
  \qquad\text{and}\qquad
  \mathbf Z\oplus
  C\mathcal N^+(\G,\operatorname{Dor})\{\eta\}
  \xrightarrow{\ \simeq\ }
  C_*^{\mathrm{cell}}(\PP(\G)).
  \label{eq:total-cellular}
\end{equation}
Applying multiplication by $\eta$, one obtains
\begin{equation}
  \eta\mathbf H_i^{\mathrm{cell}}(\PP(\G))
  \cong
  \bigoplus_{A\in\mathcal C_{\G}^{(2)}}
  H^{\dim A-i}
  \bigl([\hat0,A]_{\mathcal C_{\G}^{(2)}};\Z\bigr)
  \otimes\mathbf I^i.
  \label{eq:eta-free}
\end{equation}
for $i>0$, while
\begin{equation}
  \mathbf H_0^{\mathrm{cell}}(\PP(\G))\cong\mathbf Z.
  \label{eq:cellular-h0}
\end{equation}
Here $\eta\mathbf H$ denotes the image of multiplication by $\eta$,
that is, the part remaining after discarding $\eta$-torsion in the
convention used throughout this paper.

After inverting $\eta$, one obtains
\begin{equation}
  \mathbf H_i^{\mathrm{cell}}(\PP(\G))[\eta^{-1}]
  \cong
  \bigoplus_{A\in\mathcal C_{\G}^{(2)}}
  H^{\dim A-i}
  \bigl([\hat0,A]_{\mathcal C_{\G}^{(2)}};\Z\bigr)
  \otimes\mathbf W(k)\eta^{-i}.
  \label{eq:eta-inverted}
\end{equation}
for $i>0$.
\end{corollary}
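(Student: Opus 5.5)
We will prove Corollary~\ref{total-cellular} in three stages: first assemble the graded summands, then read off the two homology formulas using the algebra of the $\eta$-twist.

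\emph{Assembling the quasi-isomorphism.} Start from the idempotent decomposition of the previous sections. Since the grading projectors $\pi_A^{[-1]}\pi_A$ commute with the cellular filtration, this decomposition holds at the level of cellular chain complexes, giving $\widetilde C^{\mathrm{cell}}_*(\PP(\G))=\bigoplus_{A}\widetilde C^{\mathrm{cell}}_*(\PP(\G))[A]$. For $A=W^\perp\neq0$, the stated identification gives $C^{\mathrm{cell}}_*(\PP(\G/W))[V/W]\cong C^{\mathrm{cell}}_*(\PP(\G))[V/W]$. Decompose the root of $\G/W$ into indecomposable components. The wonderful model of a direct-sum building set is the product of the models of the components. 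The root summand of that product is the tensor product of the root summands of the factors, and the nested-set complex of a direct sum is the tensor product of the factor complexes. Hence Theorem~\ref{nested-cellular}, applied to each indecomposable factor together with a Künneth argument for the cellular complex of a product, yields a quasi-isomorphism $C\mathcal N(V/W,\G/W,\Dor)\{\eta\}\to C^{\mathrm{cell}}_*(\PP(\G))[V/W]$. Two bookkeeping points must be checked here: the Künneth compatibility of the $\eta$-twist, which holds because $\eta$ is central, and that the summand $A=0$ (that is, $W=V$) contributes exactly the augmentation $\mathbf Z$. Summing over $W\neq V$ then gives the first quasi-isomorphism in \eqref{eq:total-cellular}. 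Adding back the augmentation via \eqref{eq:cellular-augmentation-splitting} gives the second. Together these also yield \eqref{eq:cellular-h0}, since $C\mathcal N^+\{\eta\}$ has no degree-zero term.

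\emph{Homology after $\eta$.} Write $C=C\mathcal N^+(\G,\Dor)\in\mathrm{Ch}^+(\mathbf Z)$. The formulas stated just before Theorem~\ref{nested-cellular} give $\eta H_i(C\{\eta\})=H_i(C)\otimes\mathbf I^i$ and $H_i(C\{\eta\})[\eta^{-1}]=H_i(C)\otimes\mathbf W(k)\eta^{-i}$. It therefore remains to compute $H_i(C)$. Because $C$ is a direct sum over $A\in\mathcal C_\G$, we have $H_i(C)=\bigoplus_A H_i(\mathcal{CN}_A(\G,\Dor))$. By the statement on the cohomology of the $2$-divisible interval, each summand is interval cohomology of $[\hat0,A]_{\mathcal C^{(2)}_\G}$. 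The summand vanishes unless $A\in\mathcal C^{(2)}_\G$: if some component has odd dimension, the factor $\Dor$ of that node is acyclic.

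\emph{The main obstacle.} The hard part is the degree conversion. In the generators $\bigotimes e(\cdot)_{i_G}$, homological degree $n=|F|-\sum i_G$ must be matched with the cohomological index $\dim A-i$. To do this we will track the shifts $\widehat\Dor(W)=\Dor(W)[\dim W-1]$ along a maximal nested set, where the degrees of the nodes sum to $\dim A$, and check that the resulting regrading turns the cohomological grading into $H^{\dim A-i}$. A second subtlety is justifying the formula for $\eta H_i(C\{\eta\})$ when $H_*(C)$ has torsion. Tensoring with $\mathbf I^i$ requires splitting off torsion; alternatively one can note that Rains' interval cohomology here is torsion-free in the relevant degrees or absorbed by the noncanonical splitting. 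We would handle this by choosing a Smith normal form basis of the free complex $C$, which splits it into elementary pieces $\mathbf Z\xrightarrow{m}\mathbf Z$, and computing $\eta$-images on each piece.
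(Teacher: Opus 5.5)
Your outline is the paper's own argument. It uses the chain-level idempotent decomposition, Theorem~\ref{nested-cellular} on each graded summand, the $A=0$ piece as the augmentation, the identification of $\mathcal{CN}_A(\G,\Dor)$ with the $2$-divisible interval, and the algebra of the $\eta$-twist. Your product reduction improves on the paper. The paper applies Theorem~\ref{nested-cellular} to every summand even though that theorem assumes an indecomposable root; you still owe a K\"unneth statement for cellular complexes, for instance via the product forest filtration. The step you call the main obstacle takes one line. Put $W_{F,G}=\operatorname{child}_F(G)^\perp/G^\perp$. For every nested set, not only a maximal one, telescoping over children gives $\sum_{G\in F}\dim W_{F,G}=\dim A$. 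So $\bigotimes_G e(W_{F,G})_{i_G}$ lies in cellular degree $\sum_G(\dim W_{F,G}-1+i_G)=\dim A-n$, which is exactly the index $H^{\dim A-i}$.

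Two steps do not work as written. First, for $A\notin\mathcal C_{\G}^{(2)}$, the odd-dimensional component does not itself supply an acyclic factor. In the braid case, $F=\{G_{1234},G_{12}\}$ has $\dim W_{F,G_{1234}}=2$, and the vanishing factor sits at the leaf $G_{12}$. The correct argument runs as follows. If every $W_{F,G}$ were even-dimensional, induction from the leaves would make every node even-dimensional. So each nested set with root $A$ has an odd-dimensional $W_{F,G}$, and hence $\Dor(F)$ is acyclic. The filtration by $|F|$ then gives acyclicity, since $D$ only adds nodes.

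Second, and more seriously, torsion is not ``absorbed by the noncanonical splitting.'' Carry out your Smith-normal-form computation. A block $[\Z\xrightarrow{m}\Z]$ in degrees $j,j-1$ becomes $[\KMW_j\xrightarrow{m\eta}\KMW_{j-1}]$. Its $\eta$-image is the $m$-torsion ${}_m\mathbf I^j$ in degree $j$ and $\mathbf I^{j-1}/m$ in degree $j-1$. Hence in general
\[
  \eta H_i(C\{\eta\})\cong H_i(C)\otimes\mathbf I^i\oplus\operatorname{Tor}\bigl(H_{i-1}(C),\mathbf I^i\bigr),
\]
and likewise $H_i(C\{\eta\})[\eta^{-1}]$ acquires a term $\operatorname{Tor}(H_{i-1}(C),\mathbf W)$. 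The sheaves $\mathbf I^i$ and $\mathbf W$ have only $2$-primary torsion, and it is nonzero over many fields. Hence \eqref{eq:eta-free} and \eqref{eq:eta-inverted} follow exactly when the groups $H^*([\hat0,A]_{\mathcal C_{\G}^{(2)}};\Z)$ have no $2$-torsion. That has to be supplied as an input; it holds for the braid arrangement, where these groups are free, as used in Example~\ref{moduli-cellular}. The paper's proof passes over the same point. It identifies the $\eta$-image termwise with $\mathbf I^i$ through Morel's pullback square and uses freeness on the arrangement strata, and neither controls torsion in the global interval cohomology. You should therefore either add that hypothesis or state the formulas with the $\operatorname{Tor}$ terms.
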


\begin{proof}
Apply Theorem \ref{nested-cellular} to every nontrivial graded summand
and use the commuting idempotent decomposition.  The remaining
summand is the augmentation $\mathbf Z$ in degree zero.  The
derived-orientation
complex is quasi-isomorphic to the $2$-divisible nested-set complex,
whose cohomology is the interval cohomology by
\cite[Theorem 3.2]{Rains_2010}.  The Milnor--Witt pullback square
\[
\begin{tikzcd}
  \mathbf K_q^{\mathrm{MW}} \arrow[r] \arrow[d] &
  \mathbf K_q^{\mathrm M} \arrow[d]\\
  \mathbf I^q \arrow[r] &
  \mathbf K_q^{\mathrm M}/2
\end{tikzcd}
\]
then identifies the image of $\eta$-multiplication in degree $i$
with $\mathbf I^i$ \cite{Morel04}.  On the hyperplane arrangement
pieces in Theorem \ref{cellular-filtration}, this is the concentration
and free $\mathbf I^*$-decomposition of
\cite[Proposition 7.2 and Corollary 7.3]{Peng2023}.
\end{proof}

\begin{corollary}
\label{real-refinement-rains}
Suppose that $k$ admits an embedding into $\R$.  Let
$\Dor_{\R}$ be the integral orientation complex obtained
from $\Dor$ by real realization.  Write
\[
  C\mathcal N^+(\G,\Dor_{\R})\{2\}
\]
for the real nested-set complex in which the attaching class $\eta$ is
replaced by multiplication by $2$.  There is a quasi-isomorphism
\begin{equation}
  \Z\oplus
  C\mathcal N^+(\G,\Dor_{\R})\{2\}
  \xrightarrow{\ \simeq\ }
  C_*^{\mathrm{cell}}\bigl(\PP(\G)(\R);\Z\bigr).
  \label{eq:real-integral-nested-comparison}
\end{equation}
Consequently, Rains' description of integral homology modulo its
$2$-primary torsion is obtained from
\eqref{eq:real-integral-nested-comparison} by passing to homology and
discarding that torsion.  Before this quotient,
\eqref{eq:real-integral-nested-comparison} also records the
$2$-primary attaching maps and hence contains strictly more integral
information.
\end{corollary}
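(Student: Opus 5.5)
The plan is to obtain \eqref{eq:real-integral-nested-comparison} by applying real Betti realization to the universal comparison \eqref{eq:total-cellular} of Corollary~\ref{total-cellular}, and then to read off Rains' statement on homology.

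\emph{Step 1: realize the ingredients.} Fix an embedding $k\hookrightarrow\R$ and base change to $\R$. Real realization sends the Thom terms $\KMW_i\otimes\mathbf Z_{\A^1}[Y]$ of a cellular structure to the integral cellular chains of the real locus, with $\KMW_i$ contributing a copy of $\Z$ in degree $i$. It sends $\langle-1\rangle$ to $-1$ and $\eta$ to the degree of $\R^2\setminus0\to\PP^1(\R)$, which is $\pm2$. The filtration \eqref{eq:forest-filtration} realizes to Rains' stratification of $\PP(\G)(\R)$ by nested sets. By Theorem~\ref{cellular-filtration}, each relative term realizes to a wedge of spheres of the correct dimension, because a real hyperplane arrangement complement is a disjoint union of contractible chambers. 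Hence real realization takes $C_*^{\mathrm{cell}}(\PP(\G))$ to $C_*^{\mathrm{cell}}(\PP(\G)(\R);\Z)$, and $\Dor$ to an integral complex $\Dor_\R$.

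\emph{Step 2: transport the quasi-isomorphism.} I would rerun the induction of Theorem~\ref{nested-cellular} after realization. The building blocks are:
\begin{itemize}
\item the base case $\omega(V)$, which realizes to the standard cellular model of $\R\PP^{\dim V-1}$ with differentials $0$ and $2$;
\item the blow-up triangles of Corollary~\ref{wonderful-blowup}, which realize to the real blow-up triangles;
\item the connecting class $\chi_2(\dim G-1)\eta$, which by Corollary~\ref{blowup-parity} realizes to $0$ or $2$.
\end{itemize}
Applying the five lemma at each step gives a quasi-isomorphism on every graded summand. The idempotent splitting also realizes, since the sections $\pi_A^{[-1]}$ come from algebraic maps. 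Adding the augmentation summand $\Z$ yields \eqref{eq:real-integral-nested-comparison}.

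\emph{Step 3: the homology statements.} Taking homology of the left side and discarding $2$-primary torsion amounts to inverting $2$, by the preceding remark. Under that inversion the $\{2\}$-twisted complex has homology $H_*(\Dor\text{-complex})\otimes\Z[1/2]$, which is the $2$-divisible interval cohomology. This is exactly Rains' formula, matching the real analogue of \eqref{eq:eta-inverted}. The claim of "strictly more information" holds because the unlocalized complex retains the maps given by multiplication by $2$, and hence the $\Z/2$ summands. An example such as $\R\PP^n$, or $\overline{\mathcal M}_{0,5}(\R)$, shows this content is nonzero.

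The main obstacle is Step 1: one must check that realization is compatible with the abstract cellular complex, which lives in $D(Ab_{\A^1}(k))$. I would first try to avoid working at the sheaf level by realizing the filtered space. The real locus of the filtration is a CW-type filtration whose quotients are wedges of spheres, and the exact couple of realized cofiber sequences computes the realized differentials. The only remaining check is the sign and normalization of $\eta\mapsto2$, which I would verify on $\PP^1$ and $\PP^2$.
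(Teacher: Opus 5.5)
Your route is the paper's. You realize the universal comparison \eqref{eq:total-cellular}, use Corollary~\ref{blowup-parity} to send the two attaching classes to $0$ and $2$, add the augmentation $\Z$, and pass to homology. Your plan to realize the filtered space and its exact couple, rather than a complex in $D(Ab_{\A^1}(k))$, makes precise what the paper's one-line ``apply realization'' leaves implicit. Rerunning the induction after realization is harmless, although the paper simply realizes the final comparison. One small correction to Step~1: a stratum term $\KMW_i\otimes\mathbf Z_{\A^1}[Y]$ realizes to one copy of $\Z$ per chamber of $Y(\R)$, not to a single $\Z$. This is what your wedge-of-spheres description actually uses.

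The step to fix is Step~3. Discarding $2$-primary torsion is not the same as inverting $2$. Inverting $2$ only gives the $\Z[1/2]$-version of Rains' statement, whereas the corollary asserts the integral statement modulo $2$-primary torsion. That integral statement follows directly from the form of the realized complex. Put $C=C\mathcal N^+(\G,\Dor)$, whose terms are free abelian. Then $\ker(2\partial)=\ker\partial$ and $\operatorname{im}(2\partial)=2\operatorname{im}\partial$, so the natural map $H_i(C,2\partial)\to H_i(C)$ is surjective. Its kernel is $\operatorname{im}\partial_{i+1}/2\operatorname{im}\partial_{i+1}$, an elementary abelian $2$-group. Any lift of a $2$-primary torsion class of $H_i(C)$ is again $2$-primary torsion. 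Hence the quotients by $2$-primary torsion are canonically isomorphic, and $H_i(C)$ is computed by the $2$-divisible interval cohomology. The same kernel is exactly the extra $\Z/2$ information retained before the quotient, which makes your ``strictly more information'' claim precise without appealing to examples.
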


\begin{proof}
Real realization carries the two connecting classes of Corollary
\ref{blowup-parity} to $0$ and multiplication by $2$, respectively,
and carries the derived orientation coefficients to the ordinary
integral orientation modules.  Applying realization to
\eqref{eq:total-cellular} therefore gives
\eqref{eq:real-integral-nested-comparison}.  After passing to homology,
quotienting by its $2$-primary torsion removes precisely the part which
is invisible in \cite[Theorem~3.7]{Rains_2010}; the remaining
root-graded interval complexes agree with Rains' forest subdivision.
\end{proof}

\begin{remark}
When $k=\R$, the wonderful model $\PP(\G)$ is a smooth real
algebraic variety.  Every smooth real algebraic subvariety
$Z\subseteq\PP(\G)$ determines a class in
\[
  H_*\bigl(\PP(\G)(\R);\F_2\bigr).
\]
For a general smooth real variety these classes need not generate,
but $\PP(\G)$ is algebraically maximal in the sense of
\cite{Krasnov2003}.  Indeed, the cohomology of
$\PP(\G)(\C)$ is generated by classes of $\R$-rational algebraic
cycles \cite{CP}.  Equivalently, one may compare the mod-$2$ basis of
\cite[Proposition 6.2]{Rains_2010} with the complex cohomology basis
of \cite{Yuzvinsky1997}.

More generally, suppose that $k$ admits a real embedding
$\sigma:k\hookrightarrow\R$.  Comparing \eqref{eq:eta-inverted} with
\cite[Theorem 3.7]{Rains_2010} recovers the integral homology of
$\PP(\G)(\R)$ after inverting $2$.  Corollary
\ref{real-refinement-rains} strengthens this comparison integrally,
before any $2$-primary information is discarded.  The same comparison occurs for smooth
toric varieties \cite{liu2025cellularmathbba1homologysmoothtoric}
and for split semisimple flag varieties
\cite{liu2026cellularmathbba1homologybruhatboundary}.  For a split
semisimple simply connected group $G/\R$, this comparison is refined at
the level of cellular complexes in
\cite[Theorems 6.6, 7.1, and 7.5]{Liu2026TorusEnriched}.  The
torus-enriched motivic Bruhat complex of $G/U$ realizes to the
extended-Weyl cellular complex of a pinning-compatible maximal compact
subgroup $K\subseteq G(\R)$, whereas torus augmentation gives the flag
complex of $K/M$.  Thus the maximal-compact and flag comparisons are
obtained from the same chain-level connecting morphisms before the
torus labels are forgotten.  Related realization results for split
semisimple algebraic groups are obtained in
\cite[Remark 16 and Section 5.4]{MOREL2023109346}: the real case
concerns $G(\R)$, while complex realization recovers Cartan's
homotopy-theoretic calculation for $G(\C)$.

There are two different notions of cellularity in this comparison.  In
\cite[Definition~5.1]{Hornbostel_2021}, a cellular variety has a closed
filtration whose successive differences are affine spaces.  For such a
strictly cellular variety, the real cycle-class map is an isomorphism by
\cite[Theorem~5.7]{Hornbostel_2021}.  The Morel--Sawant notion used in
Theorem \ref{cellular-filtration} is broader: it is an increasing open
filtration whose strata are smooth and cohomologically trivial for every
strictly $\A^1$-invariant sheaf
\cite[Section~2.3]{MOREL2023109346}.  The strata may therefore be
hyperplane-arrangement complements rather than affine spaces.  A strict
affine paving gives a Morel--Sawant cellular structure, but the converse
is not automatic.  In particular, Theorem
\ref{cellular-filtration} alone does not put $\PP(\G)$ under the
hypotheses of \cite[Theorem~5.7]{Hornbostel_2021}.

For the wonderful models considered here, the real cycle-class
comparison instead follows along the Morel--Sawant filtration.  Each
stratum occurring in the proof of Theorem \ref{cellular-filtration} is
a disjoint union of products of hyperplane-arrangement complements,
and each such product is the complement of the corresponding product
arrangement in an affine space.  On these strata the comparison is the
isomorphism of
\cite[Proposition~7.5]{Peng2023}.  Compatibility of the real cycle-class
map with pushforwards and localization
\cite[Theorem~4.7 and Proposition~4.8]{Hornbostel_2021} gives a morphism
between the algebraic and topological exact couples of the filtration.
Induction over the filtration therefore gives
\[
  H^*(\PP(\G),\mathbf I^*)
  \cong
  H^*(\PP(\G)(\R),\Z).
\]
Here \cite[Proposition~2.27]{MOREL2023109346} identifies the algebraic
abutment with the cohomology of the Morel--Sawant cellular complex; it
is not, by itself, a real-realization theorem.  The remaining mod-$2$,
respectively $\eta$-primary, information is retained by the integral
complex \eqref{eq:real-integral-nested-comparison}; its mod-$2$ cellular
basis agrees with \cite[Proposition~6.2]{Rains_2010}.
\end{remark}

%% file: example.tex
\section{Examples}

We first make the inductive cone construction explicit for one and two
blow-up centers.  We then pass to the braid building set, where the
root-graded complexes assemble into a partition-theoretic description
of the cellular complex.  The last two results apply the cohomology
comparison \eqref{eq:cellular-cohomology-comparison} to extract the
additive Chow--Witt groups and, over $\R$, their multiplicative
structure.

\begin{example}
  \label{egPP}
  Let $H\subset V$ with $\dim V=d$ and $\dim(V/H)=c$, and set
  \[
    \G=\{V^*,H^\perp\}.
  \]
  Then $\PP(\G)$ is the blow-up $\operatorname{Bl}_{\PP(H)}\PP(V)$.

  The open complement $\cU=\PP(V)\setminus \PP(H)$ is a vector bundle
  over $\PP^{c-1}$.

  The cellular complex of the Thom space is
  \[
    C^{cell}_*(\mathrm{Th}(N_{\PP(V)/\PP(H)}))
    \cong
    \widetilde{C}^{cell}_*(\PP(V))/\widetilde{C}^{cell}_*(\PP^{c-1})
    \cong
    \mathrm{Th}_{H^\perp}(H,\{H^*\},\Dor)\{\eta\}.
  \]
  Using the triangle
  \[
    \widetilde{C}^{cell}_*(\PP(\G))
    \to \widetilde{C}^{cell}_*(\PP(V))
    \xr{\beta_H}
    C^{cell}_*(\PP(H))\otimes \widetilde{C}^{cell}_*(\PP(V/H))[1]
  \]
  we see that $\widetilde{C}^{cell}_*(\PP(\G))\cong
  \operatorname{Cone}(\beta_H)[-1]$, which is quasi-isomorphic to
  \[
    \operatorname{Cone}\left(
      \widehat{\Dor}(V)\{\eta\}[-1]
      \xr{\gamma_{H^\perp}}
      (\widehat{\Dor}(H)\oplus \bZ)\otimes\widehat{\Dor}(V/H)\{\eta\}
    \right).
  \]
  The root summand $\widetilde{C}^{cell}_*(\PP(\G))[V]$ is
  \[
    C\cN(V,\G,\Dor)\{\eta\}
    =
    \operatorname{Cone}\left(
      \widehat{\Dor}(V)\{\eta\}[-1]
      \xr{\gamma_{H^\perp}}
      \widehat{\Dor}(H)\otimes\widehat{\Dor}(V/H)\{\eta\}
    \right),
  \]
  and $\widetilde{C}^{cell}_*(\PP(\G))[V/H]\cong
  \widehat{\Dor}(V/H)\{\eta\}$.
\end{example}

The preceding example isolates the connecting morphism attached to one
center and separates its root and quotient summands.  The next example
shows how the same triangle iterates: the cone produced by the first
blow-up becomes the source of the connecting morphism for the second.

\begin{example}\label{two-centers}
Let $H_1,H_2\subset V$ satisfy $H_1\cap H_2=0$, and set
\[
  \G=\{V^*,H_1^\perp,H_2^\perp\},
  \qquad
  \G_1=\{V^*,H_1^\perp\}.
\]
Then
\[
  \PP(\G)
  \cong
  \operatorname{Bl}_{\widetilde{\PP(H_2)}}
  \operatorname{Bl}_{\PP(H_1)}\PP(V).
\]
For the first blow-up, the root summand is
\[
\begin{split}
  C_1
  &:=
  \widetilde C_*^{\mathrm{cell}}(\PP(\G_1))[V]\\
  &\simeq
  \mathcal{CN}_{V^*}(\G_1,\operatorname{Dor})\{\eta\}\\
  &=
  \operatorname{Cone}\left(
    \widehat{\operatorname{Dor}}(V)\{\eta\}[-1]
    \xrightarrow{\gamma_{H_1^\perp}}
    \bigl(
      \widehat{\operatorname{Dor}}(H_1)
      \otimes
      \widehat{\operatorname{Dor}}(V/H_1)
    \bigr)\{\eta\}
  \right).
\end{split}
\]
The second blow-up gives
\[
  \widetilde C_*^{\mathrm{cell}}(\PP(\G))[V]
  \simeq
  \operatorname{Cone}\left(
    C_1[-1]
    \xrightarrow{\gamma_{H_2^\perp}}
    \bigl(
      \widehat{\operatorname{Dor}}(H_2)
      \otimes
      \widehat{\operatorname{Dor}}(V/H_2)
    \bigr)\{\eta\}
  \right),
\]
while
\[
  \widetilde C_*^{\mathrm{cell}}(\PP(\G))[V/H_i]
  \simeq
  \widehat{\operatorname{Dor}}(V/H_i)\{\eta\},
  \qquad i=1,2.
\]
\end{example}

These two calculations are the local steps in the induction on a
building set.  For the braid arrangement, all such steps are organized
simultaneously by the partition lattice.  This turns the root-by-root
description into a global decomposition and makes the surviving
$\eta$-free terms and the complementary $\eta$-cones separately
countable.

\begin{example}\label{moduli-cellular}
Let $N=m+1$ and use the identification
\[
  \overline{\mathcal M}_{0,N}
  \cong
  \PP(\overline A_{m-1})
\]
from Example \ref{moduli}.  A partition $\pi$ of $[m]$ corresponds to
\[
  A_\pi
  =
  \bigoplus_{\substack{B\in\pi\\|B|\geq2}}G_B,
  \qquad
  \dim A_\pi=m-|\pi|.
\]
Since $\dim G_B=|B|-1$,
\begin{equation}
  A_\pi\in\mathcal C_{\overline A_{m-1}}^{(2)}
  \quad\Longleftrightarrow\quad
  |B|\text{ is odd for every }B\in\pi.
  \label{eq:odd-partitions}
\end{equation}
Thus \eqref{eq:eta-free} gives, for every odd partition $\pi$,
\begin{equation}
  \eta\mathbf H_i^{\mathrm{cell}}
  \bigl(\overline{\mathcal M}_{0,N}\bigr)[\pi]
  \cong
  H^{m-|\pi|-i}
  \bigl([\hat0,\pi]_{\mathrm{odd}};\Z\bigr)
  \otimes\mathbf I^i.
  \label{eq:odd-partition-piece}
\end{equation}
for $i>0$.  In degree zero the cellular homology is $\mathbf Z$.

If $\pi$ is not odd, there is no $2$-divisible nested set with root
$A_\pi$.  Hence the complex
\[
  \mathcal{CN}_{A_\pi}
  (\overline A_{m-1},\operatorname{Dor})
\]
is bounded and acyclic with free abelian terms.  After choosing a
splitting, its $\eta$-twist is a sum of the two-term complexes
\begin{equation}
  E_j
  :=
  \left[
    \KMW_j
    \xrightarrow{\ \eta\ }
    \KMW_{j-1}
  \right].
  \label{eq:eta-cone}
\end{equation}
with $j\geq2$.  Thus every nonodd partition-graded summand is a sum of
$\eta$-cones.  No cone reaches degree zero.

The number of cones is read from complex cohomology.  Put
\[
  P_N^{\C}(t)=\sum_i c_i t^i,
  \qquad
  c_i
  =
  \operatorname{rank}
  H^{2i}
  \bigl(\overline{\mathcal M}_{0,N}(\C);\Z\bigr),
\]
where the $c_i$ are given by the De Concini--Procesi presentation
\cite[p.~352]{Feichtner}, and put
\[
  b_i
  =
  \sum_{\pi\in\Pi_m^{\mathrm{odd}}}
  \operatorname{rank}
  H^{m-|\pi|-i}
  \bigl([\hat0,\pi]_{\mathrm{odd}};\Z\bigr),
  \qquad
  P_N^{\mathrm{odd}}(t)=\sum_i b_i t^i.
\]
The odd-partition homology is free.  Hence the underlying integral
complex splits into its homology and contractible pairs.  If $r_j$ is
the total number of pairs in degrees $j$ and $j-1$, then
\begin{equation}
  c_i=b_i+r_i+r_{i+1},
  \qquad
  \sum_{j\geq1}r_jt^{j-1}
  =
  \frac{P_N^{\C}(t)-P_N^{\mathrm{odd}}(t)}{1+t}.
  \label{eq:cone-count}
\end{equation}
Consequently, noncanonically,
\begin{equation}
  C_*^{\mathrm{cell}}
  \bigl(\overline{\mathcal M}_{0,N}\bigr)
  \simeq
  \mathbf Z
  \oplus
  \bigoplus_{i\geq1}
  \bigl(\KMW_i[i]\bigr)^{b_i}
  \oplus
  \bigoplus_{j\geq2}E_j^{\,r_j}.
  \label{eq:global-splitting}
\end{equation}

For $\overline{\mathcal M}_{0,5}$,
\[
  P_5^{\C}(t)=1+5t+t^2,
  \qquad
  P_5^{\mathrm{odd}}(t)=1+4t,
\]
so the right-hand side of \eqref{eq:cone-count} equals $t$ and
\begin{equation}
  C_*^{\mathrm{cell}}
  \bigl(\overline{\mathcal M}_{0,5}\bigr)
  \simeq
  \mathbf Z
  \oplus
  \bigl(\KMW_1[1]\bigr)^4
  \oplus E_2.
  \label{eq:m05-splitting}
\end{equation}

For $\overline{\mathcal M}_{0,6}$,
\[
  P_6^{\C}(t)=1+16t+16t^2+t^3,
  \qquad
  P_6^{\mathrm{odd}}(t)=1+10t+9t^2,
\]
so the right-hand side of \eqref{eq:cone-count} equals $6t+t^2$ and
\begin{equation}
  C_*^{\mathrm{cell}}
  \bigl(\overline{\mathcal M}_{0,6}\bigr)
  \simeq
  \mathbf Z
  \oplus
  \bigl(\KMW_1[1]\bigr)^{10}
  \oplus
  \bigl(\KMW_2[2]\bigr)^9
  \oplus E_2^{\,6}
  \oplus E_3.
  \label{eq:m06-splitting}
\end{equation}
The decomposition is additive and noncanonical.  Odd-partition
summands may also contain contractible pairs, whereas every nonodd
partition summand consists entirely of such pairs.
\end{example}

Example \ref{moduli-cellular} determines the cellular complex by the
integers $b_i$, which count odd-partition homology, and $r_j$, which
count contractible pairs before the $\eta$-twist.  Applying
\eqref{eq:cellular-cohomology-comparison} determines every untwisted
Milnor--Witt cohomology group, not only the diagonal groups defining
Chow--Witt theory.

\begin{corollary}\label{mw-cohomology-moduli}
Put $X_N=\overline{\mathcal M}_{0,N,k}$ and set $r_0=r_1=0$.  For every
$i\geq0$ and $q\in\Z$, there is a noncanonical additive decomposition
\begin{equation}
\begin{split}
  H^i_{\mathrm{Nis}}(X_N,\KMW_q)
  \cong{}&
  \KMW_{q-i}(k)^{b_i}
  \oplus
  \bigl({}_{\eta}\KMW_{q-i}(k)\bigr)^{r_{i+1}}\\
  &\oplus
  \bigl(\KMW_{q-i}(k)/\eta\bigr)^{r_i},
\end{split}
\label{eq:all-mw-cohomology}
\end{equation}
where
\[
  {}_{\eta}\KMW_s(k)
  =\ker\bigl(\eta:\KMW_s(k)\to\KMW_{s-1}(k)\bigr)
\]
and
\[
  \KMW_s(k)/\eta
  =\operatorname{coker}\bigl(
    \eta:\KMW_{s+1}(k)\to\KMW_s(k)
  \bigr).
\]
In the notation of
\cite[Conjecture~3.2.2]{Hennig2026CellularM0n}, this proves the
trivial-line-bundle case with
\begin{equation}
  \alpha_{N,i}(\mathcal O)=b_i,
  \qquad
  \beta_{N,i}(\mathcal O)=r_{i+1},
  \qquad
  \gamma_{N,i}(\mathcal O)=r_i.
  \label{eq:hennig-multiplicities}
\end{equation}
\end{corollary}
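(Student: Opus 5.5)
The computation starts from the noncanonical splitting \eqref{eq:global-splitting} of Example \ref{moduli-cellular}, fed into the comparison \eqref{eq:cellular-cohomology-comparison} with $M=\KMW_q$. Since $\operatorname{Hom}_{D(Ab_{\A^1}(k))}(-,\KMW_q[i])$ is additive, $H^i_{\mathrm{Nis}}(X_N,\KMW_q)$ is the direct sum of the contributions of the three kinds of summands: $\mathbf Z$ in degree zero, the free pieces $\KMW_j[j]$, and the cones $E_j$. The splitting is noncanonical, so the resulting decomposition is additive and noncanonical as well.

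First I compute the basic Hom groups. The key input is Morel's identification
\[
  \operatorname{Hom}_{Ab_{\A^1}(k)}(\KMW_j,\KMW_q)\cong\KMW_{q-j}(k),
\]
obtained by evaluating on the class $[t_1,\dots,t_j]$, that is, by contracting $j$ times. Together with the facts that $\KMW_j$ is a projective-like object for these purposes and that $\mathbf Z$ is the free object on a point, this gives $\operatorname{Ext}^{>0}(\KMW_j,\KMW_q)=0$. In fact I only need that the cellular cochain complex $\operatorname{Hom}(C_*^{\mathrm{cell}},\KMW_q)$ computes the cohomology, which is exactly \eqref{eq:cellular-cohomology-comparison}. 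So I work at the level of the Hom complex degreewise and never need Ext vanishing.

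With this in hand, the contributions are as follows.
\begin{itemize}
  \item The summand $\mathbf Z$ contributes $\KMW_q(k)$ in degree $0$. This is the $b_0=1$ term, since $P^{\mathrm{odd}}_N(t)$ has constant term $1$.
  \item Each $\KMW_i[i]$ contributes $\KMW_{q-i}(k)$ in degree $i$. This gives the $b_i$ term.
  \item Each cone $E_j=[\KMW_j\xrightarrow{\eta}\KMW_{j-1}]$, with $\KMW_j$ in degree $j$, dualizes to the two-term cochain complex
  \[
    \KMW_{q-j+1}(k)\xrightarrow{\ \eta\ }\KMW_{q-j}(k)
  \]
  in cochain degrees $j-1$ and $j$.
\end{itemize}

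The main care is needed in checking that the dual of multiplication by $\eta$ is again multiplication by $\eta$ under the identification $\operatorname{Hom}(\KMW_j,\KMW_q)\cong\KMW_{q-j}(k)$, possibly up to a unit such as $\langle-1\rangle$ or a sign, and that such a unit does not change kernels or cokernels. This holds because $\eta$ is central and homs are $\KMW_*(k)$-linear. Granting this, the cone $E_j$ has cohomology
\[
  {}_\eta\KMW_{q-j+1}(k)\ \text{in degree } j-1,
  \qquad
  \KMW_{q-j}(k)/\eta\ \text{in degree } j.
\]

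Now I collect terms in degree $i$. The cones with $j=i+1$ contribute $({}_\eta\KMW_{q-i}(k))^{r_{i+1}}$, and the cones with $j=i$ contribute $(\KMW_{q-i}(k)/\eta)^{r_i}$. Since cones occur only for $j\ge2$, setting $r_0=r_1=0$ makes the formula uniform in $i$, including $i=0,1$. This yields \eqref{eq:all-mw-cohomology}.

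Finally, Hennig's conjecture posits exactly a decomposition into free parts, $\eta$-kernel parts, and $\eta$-cokernel parts. Matching the summands term by term gives the multiplicities \eqref{eq:hennig-multiplicities}.
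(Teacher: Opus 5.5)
Your proposal is correct and follows the paper's proof essentially line by line: you feed the splitting \eqref{eq:global-splitting} into \eqref{eq:cellular-cohomology-comparison}, use $\operatorname{Hom}(\KMW_s,\KMW_q)\cong\KMW_{q-s}(k)$, and read off ${}_\eta\KMW_{q-i}(k)$ from $E_{i+1}$ and $\KMW_{q-i}(k)/\eta$ from $E_i$. One correction: \eqref{eq:global-splitting} is only a quasi-isomorphism, so computing with the degreewise Hom complex of the split model tacitly uses that the terms $\KMW_j$ are acyclic for $\operatorname{Hom}(-,\KMW_q)$, which is what makes degreewise Hom compute derived Hom and be invariant under quasi-isomorphism; your remark that you ``never need Ext vanishing'' is therefore too quick, although the paper makes the same identification without comment.
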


\begin{proof}
The contraction adjunction of
\cite[Remark~2.26]{MOREL2023109346} gives
\[
  \operatorname{Hom}_{Ab_{\A^1}(k)}(\KMW_s,\KMW_q)
  \cong \KMW_{q-s}(k).
\]
Hence a summand $\KMW_i[i]$ in \eqref{eq:global-splitting}
contributes $\KMW_{q-i}(k)$ in cohomological degree $i$.  Applying
$\operatorname{Hom}(-,\KMW_q)$ to $E_j$ gives the two-term cochain
complex
\[
  \KMW_{q-j+1}(k)
  \xrightarrow{\ \eta\ }
  \KMW_{q-j}(k)
\]
in degrees $j-1$ and $j$.  Thus $E_{i+1}$ contributes the
$\eta$-kernel in degree $i$, while $E_i$ contributes the
$\eta$-cokernel.  Formula \eqref{eq:all-mw-cohomology} follows from
\eqref{eq:global-splitting}; in degree zero, $b_0=1$ and
$r_0=r_1=0$ recover $H^0_{\mathrm{Nis}}(X_N,\KMW_q)=\KMW_q(k)$.
\end{proof}

For $N=5$, Corollary \ref{mw-cohomology-moduli} gives
\[
\begin{aligned}
  H^0_{\mathrm{Nis}}(X_5,\KMW_q)
    &\cong \KMW_q(k),\\
  H^1_{\mathrm{Nis}}(X_5,\KMW_q)
    &\cong \KMW_{q-1}(k)^4
      \oplus{}_{\eta}\KMW_{q-1}(k),\\
  H^2_{\mathrm{Nis}}(X_5,\KMW_q)
    &\cong \KMW_{q-2}(k)/\eta,
\end{aligned}
\]
which agrees with the direct boundary-matrix computation of
\cite[Section~3.2.3]{Hennig2026CellularM0n}.  For $N=6$, it gives the
new explicit formulas
\[
\begin{aligned}
  H^1_{\mathrm{Nis}}(X_6,\KMW_q)
    &\cong \KMW_{q-1}(k)^{10}
      \oplus\bigl({}_{\eta}\KMW_{q-1}(k)\bigr)^6,\\
  H^2_{\mathrm{Nis}}(X_6,\KMW_q)
    &\cong \KMW_{q-2}(k)^9
      \oplus\bigl(\KMW_{q-2}(k)/\eta\bigr)^6
      \oplus{}_{\eta}\KMW_{q-2}(k),\\
  H^3_{\mathrm{Nis}}(X_6,\KMW_q)
    &\cong \KMW_{q-3}(k)/\eta.
\end{aligned}
\]

\subsection{Line-bundle twists and Hennig's conjecture}

The results above are untwisted and constitute the application of the
main comparison theorem.  We now record a secondary, coefficientwise
extension which places them in the context of Hennig's conjecture.  When
the twist is trivial, it will recover the nested-set complex used in the
main results.

We first give the construction for a general wonderful model.  Let
$\G$ be a building set on $V$ satisfying the complete-flag hypothesis
of Theorem \ref{cellular-filtration}, and put
\[
  X_{\G}=\PP(\G).
\]
Let $L$ be a line bundle on $X_{\G}$.  It is important to retain the
two faces in each projective normal direction before taking their signed
sum: a twist may turn their cancellation into an $\eta$-map.

\begin{definition}
\label{twisted-derived-orientation}
For a nested set $F$, let $S_F$ denote the corresponding open piece of
the forest filtration of $X_{\G}$.  Choose a trivialization $\lambda_F$
of $L|_{S_F}$ on each connected component.  Such trivializations exist:
the proof of Theorem \ref{cellular-filtration} identifies every
component of $S_F$ with the complement of an affine hyperplane
arrangement, whose coordinate ring is a localization of a polynomial
ring and hence has trivial Picard group.  For $G\in F$, put
\[
  W_{F,G}
  =\operatorname{child}_F(G)^\perp/G^\perp.
\]
The corresponding projective normal direction is $\PP(W_{F,G})$.
Let $\ell_{F,G}(L)\in\Z/2$ be the parity of the degree of $L$ on a
projective line in this direction; if $\dim W_{F,G}=1$, set
$\ell_{F,G}(L)=0$, since that factor has no positive-dimensional cell.

For a vector space $W$ of dimension $d$ and $\ell\in\Z/2$, define the
augmented one-direction factor
\begin{equation}
  \Dor(W;\ell)_i=
  \begin{cases}
    \Z e(W)_i,&0\leq-i\leq d-1,\\
    0,&\text{otherwise},
  \end{cases}
  \qquad
  \delta^{\ell}_i=\chi_2(d+i+\ell).
  \label{eq:Dor-L-projective}
\end{equation}
Before adding the two faces of any projective direction, compare the
chosen trivializations on the common transverse normal slice.  If the
resulting unit is $u^L_{F,F',\epsilon}$ for the $\epsilon$-face of a
codimension-one incidence $F\prec F'$, define
\begin{equation}
  D^L_{F,F',\epsilon}
  =D_{F,F',\epsilon}\otimes
   \langle u^L_{F,F',\epsilon}\rangle.
  \label{eq:twisted-transport}
\end{equation}
Thus
\[
  \Dor_L(F)
  =\bigotimes_{G\in F}
   \Dor\bigl(W_{F,G};\ell_{F,G}(L)\bigr),
\]
with incidence differential the signed sum of the maps
$D^L_{F,F',\epsilon}$.  For $F=\varnothing$ the empty tensor is retained.
Taking all root summands, the resulting augmented total complex is
denoted
\[
  C\mathcal N^{\mathrm{aug}}(\G,\Dor_L)\{\eta\}.
\]
More explicitly, let $F_e,F_f$, and $F_{ef}$ be obtained from $F$ by
adding two independent nested-set nodes in the indicated order.  For
each choice of faces $\epsilon,\delta\in\{+,-\}$, the transition cocycle
and the projective face maps give, after the canonical tensor
identifications,
\[
  D^L_{F_e,F_{ef},\delta}\circ D^L_{F,F_e,\epsilon}
  =
  D^L_{F_f,F_{ef},\epsilon}\circ D^L_{F,F_f,\delta}.
\]
The two paths carry opposite incidence signs in the totalization, so
their contributions cancel.  Thus the differential squares to zero.

Here ``augmented'' means that the degree-zero forest terms and their
incoming differentials are retained.  When the complex is evaluated on
$\KMW_q$, a generator in cellular degree $i$ carries
$\KMW_{q-i}$ and every nonzero entry of \eqref{eq:Dor-L-projective}
acts by $\eta$ together with the indicated orientation unit.  We write
$C\mathcal N^{\mathrm{aug}}_{q}(\G,\Dor_L)\{\eta\}$ for this evaluated
cochain complex.
\end{definition}

Changing the $\lambda_F$ conjugates the differential by diagonal
units, so the isomorphism class of $\Dor_L$ is independent of all these
choices and depends only on
$[L]\in\operatorname{Pic}(X_{\G})/2$.  After real realization, the
transition factor becomes $\operatorname{sgn}(u^L_{F,F',\epsilon})$.
Formula
\eqref{eq:Dor-L-projective} is the familiar change of parity for the
cellular differential of real projective space.  In particular, it
explains why the degree-zero augmentation cannot be split off for a
nontrivial twist.

\begin{theorem}
\label{twisted-cellular-comparison}
Let $k$ be a perfect infinite field of characteristic different from
$2$, let $\G$ be a building set satisfying the hypothesis of Theorem
\ref{cellular-filtration}, let $L$ be a line bundle on
$X_{\G}=\PP(\G)$, and let $q\in\Z$.  There is a coefficientwise
quasi-isomorphism, natural up to the canonical change-of-trivialization
isomorphisms,
\begin{equation}
  C\mathcal N^{\mathrm{aug}}_{q}(\G,\Dor_L)\{\eta\}
  \xrightarrow{\ \simeq\ }
  C^*_{\mathrm{cell}}
  \bigl(X_{\G},\KMW_q(L)\bigr).
  \label{eq:twisted-cellular-comparison}
\end{equation}
Consequently, the cohomology of the complex on the left is
$H^*_{\mathrm{Nis}}(X_{\G},\KMW_q(L))$.  If $L$ is trivial modulo $2$,
the two face transports are trivial, the degree-zero term splits, and
the positive part contracts to
$C\mathcal N^+(\G,\Dor)\{\eta\}$ of Corollary
\ref{total-cellular}.
\end{theorem}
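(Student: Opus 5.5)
The plan is to compute the twisted cellular cochain complex directly from the forest filtration of Theorem~\ref{cellular-filtration}, one stratum at a time, and match it term by term with the nested-set model. The cellular cochain complex with coefficients in $\KMW_q(L)$ is built from the same open filtration. Each relative term $(d_F,d_F\cap X^n)$ is identified, via the operad map $\phi_F$ and localization, with compactly supported data on the stratum $S_F$. By the proof of Theorem~\ref{cellular-filtration}, $S_F$ is a product over $G\in F$ of hyperplane-arrangement complements. Using the trivialization $\lambda_F$ of $L|_{S_F}$, the twisted Thom terms become untwisted, so the term in cellular degree $i$ is a sum of copies of $\KMW_{q-i}(k)$. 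I would index these copies by the generators $\bigotimes_G e(W_{F,G})_{i_G}$ of the augmented factors $\Dor(W_{F,G};\ell_{F,G}(L))$. This index set is the full Tate basis of the factors $\PP(W_{F,G})$ before any collapsing, and the degree-zero forest terms are included. This gives a degreewise isomorphism between the two sides of \eqref{eq:twisted-cellular-comparison}.

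The second step is to identify the differentials. By the universal normal-slice computation (Proposition~\ref{blowup}, Corollary~\ref{blowup-parity}, Corollary~\ref{wonderful-blowup}), every component of the cellular differential is a residue across one codimension-one incidence $F\prec F'$. Such a residue is computed on the transverse slice $\A^c\setminus\{0\}\to\PP^{c-1}$ and has two faces $\epsilon=\pm$.

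Without a twist, the two faces contribute $1$ and $\langle-1\rangle^{c-1}$; their signed sum gives $(c-1)_\epsilon\eta$. With $L$, the face-$\epsilon$ contribution is additionally multiplied by $\langle u^L_{F,F',\epsilon}\rangle$, namely the comparison of $\lambda_F$ and $\lambda_{F'}$ on the slice. Taking the product of the two transports along a projective line, one obtains $\langle-1\rangle^{\deg L}$. This replaces $\chi_2(d+i)$ by $\chi_2(d+i+\ell)$, which is exactly $\delta^\ell$ in \eqref{eq:Dor-L-projective}.

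To assemble the global identification, I would argue by induction along the blow-up sequence, as in Theorem~\ref{nested-cellular}. The coefficient-level triangles are obtained by applying $\operatorname{Hom}(-,\KMW_q(L))$ to the cellular blow-up triangle, and the twisted analogue of Lemma~\ref{AbsGysin} is proved by the same local computation. Independence of choices follows because changing $\lambda_F$ conjugates both sides by the same diagonal units.

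For the final assertion, suppose $L\equiv0$ in $\operatorname{Pic}/2$. Then I can choose $\lambda_F$ coming from a global trivialization of $L$ modulo squares, so all $u^L$ are squares and $\langle u^L\rangle=1$, and all $\ell_{F,G}=0$. The factor $\Dor(W;0)$ then differs from $\Dor(W)$ only by the extra end generators $e(W)_0$ and $e(W)_{-(d-1)}$, which form a contractible pair or the point summand. Contracting these pairs with the standard homological perturbation splits off the degree-zero $\mathbf Z$ and leaves $C\mathcal N^+(\G,\Dor)\{\eta\}$, in agreement with Corollary~\ref{total-cellular}.

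The main obstacle is the face-transport computation on a slice of codimension $c>2$ that also meets other nested-set nodes. There one must check that the units obtained from the trivializations multiply coherently across overlapping incidences. I would first reduce to the case $\PP(W)$ with $L=\mathcal O(1)$, using the cocycle identity in Definition~\ref{twisted-derived-orientation}. The infinite-field hypothesis is used here, to choose slice curves transverse to the boundary as in Hennig's Lemma~3.1.1.
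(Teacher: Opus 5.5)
\textbf{Gap in the first step.} The degreewise isomorphism does not exist, and your second step depends on it. The complex $C\mathcal N^{\mathrm{aug}}_q(\G,\Dor_L)\{\eta\}$ is not the forest-filtration cochain complex written in a basis. It is a smaller model that is only quasi-isomorphic to it, which is why the statement (like Theorem~\ref{nested-cellular}) asserts a quasi-isomorphism rather than an isomorphism.

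By the proof of Theorem~\ref{cellular-filtration}, the relative term of a stratum $S_F$ contributes one copy of $\KMW_{q-i}$ for each Tate summand of a product of hyperplane-arrangement complements. That is as many copies as its total Betti number, and all of them lie in the single cellular degree $i=\dim S_F$. By contrast, $\Dor_L(F)$ has $\prod_{G\in F}\dim W_{F,G}$ generators spread over several degrees. Already for $\overline{\mathcal M}_{0,5}$, the open stratum $M_{0,5}$, with Poincar\'e polynomial $(1+2t)(1+3t)$, gives twelve copies in degree $2$. The nested set $F=\{V^*\}$ gives only $e(V)_0,e(V)_{-1},e(V)_{-2}$, in three different degrees. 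So there is no bijection to index by.

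Your second step inherits this confusion. The entries of the forest-filtration differential are residues along boundary divisors. The class $(c-1)_\epsilon\eta$ from $\A^c\setminus\{0\}\to\PP^{c-1}$ is instead the connecting morphism of the blow-up triangle (Corollaries~\ref{blowup-parity} and~\ref{wonderful-blowup}). It enters the model only through the iterated cone construction, not as a matrix entry between strata. Moreover, ``two faces contributing $1$ and $\langle-1\rangle^{c-1}$ whose signed sum is $(c-1)_\epsilon\eta$'' is a real-realization heuristic, not an identity in $\KMW$; the two sides do not even lie in the same degree.

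\textbf{What does work.} Your third paragraph is the paper's actual proof and should carry the whole argument.
\begin{enumerate}
\item Base case $\PP(W)$: restricting $L$ to a line gives $\mathcal O(a)$ with $a\equiv\ell\pmod 2$. The standard twisted projective computation then yields exactly $\delta^\ell$ of \eqref{eq:Dor-L-projective}.
\item Inductive step: at each wonderful blow-up, choose the trivializations compatibly on the open part, the center, and the projective normal bundle. In the two triangles of Theorem~\ref{nested-cellular}, every face map on both sides then acquires the same factor $\langle u^L_{F,F',\epsilon}\rangle$. So the connecting maps remain compatible and the middle map is a quasi-isomorphism.
\item Cellular cochains then compute Nisnevich cohomology by \eqref{eq:cellular-cohomology-comparison}.
\end{enumerate}

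\textbf{The trivial-twist case.} Your final paragraph also miscounts. The generator $e(W)_0$ already belongs to $\Dor(W)$. The only new generator of $\Dor(W;0)$ is the $0$-cell $e(W)_{-(d-1)}$, and it is not paired with anything inside the factor, since $\delta^0_{-(d-2)}=0$. When $\dim W_{F,G}=1$, the whole factor is new, because $\Dor(W)=0$. The reduction to $\mathbf Z\oplus C\mathcal N^+(\G,\Dor)\{\eta\}$ is therefore not a factorwise cancellation of pairs. Deduce it as the paper does: once the transports are trivial, $\Dor_L$ collapses to $\Dor$. Then the augmentation splitting \eqref{eq:cellular-augmentation-splitting} together with Corollary~\ref{total-cellular} identifies the positive part.
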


\begin{proof}
The base of the blow-up induction is projective space.  Restricting $L$
to a transverse line gives $\mathcal O(a)$ with
$a\equiv\ell\pmod 2$, and the standard projective cellular computation
on $e(W)_i$ has differential $0$ or $\eta$ according as
$d+i+\ell$ is even or odd.  After the shift by
$d-1$, this says that the differential out of cellular degree $r$ is
$\eta$ exactly when $r+\ell$ is even.  This is precisely
\eqref{eq:Dor-L-projective} and the calculation of
\cite[Sections~3.2.1--3.2.2]{Hennig2026CellularM0n}.

At a wonderful blow-up, choose the trivializations compatibly on the
open part, the center, and the projective normal bundle.  In the two
triangles from the proof of Theorem \ref{nested-cellular}, every face
map on both sides is multiplied by the same factor
$\langle u^L_{F,F',\epsilon}\rangle$, while the normal projective factor
is governed by \eqref{eq:Dor-L-projective}.  The two triangles therefore
remain compatible, and induction gives
\eqref{eq:twisted-cellular-comparison}.  The face-square identity in
Definition \ref{twisted-derived-orientation} shows directly that the
total differential squares to zero.  Cellular cochains compute
Nisnevich cohomology by \eqref{eq:cellular-cohomology-comparison}.

If $[L]=0$ in $\operatorname{Pic}(X_{\G})/2$, the transition factors
are trivial modulo squares.  The two-face presentation then collapses
to $\Dor$, and the untwisted augmentation splitting gives the last
assertion.
\end{proof}

We now specialize to the braid building set
$\G=\overline A_{N-2}$, for which
$X_{\G}=X_N=\overline{\mathcal M}_{0,N}$.  Write
\[
  C\mathcal N^{\mathrm{aug}}_{N,q}(\Dor_L)\{\eta\}
  :=
  C\mathcal N^{\mathrm{aug}}_{q}
  (\overline A_{N-2},\Dor_L)\{\eta\}.
\]
Hennig's cellular comparison gives the same specialization directly
from the boundary stratification; see
\cite[Theorem~2.2.3]{Hennig2026CellularM0n} and the computation in
\cite[Section~3.2.3]{Hennig2026CellularM0n}.

For the real comparison, base change $X_N$ to $\R$ without changing
notation and put
\[
  M_N=\overline{\mathcal M}_{0,N}(\R).
\]
For a line bundle $L$ on $X_N$, let $\Z(L)$ be the rank-one integral
local system on $M_N$ determined by
\[
  w_L=w_1(L(\R))\in H^1(M_N;\F_2),
\]
and let $\chi_L:\pi_1(M_N)\to\{\pm1\}$ be the corresponding
character.  Under real realization, Theorem
\ref{twisted-cellular-comparison} becomes the ordinary
local-coefficient cellular complex
\[
  C\mathcal N^{\mathrm{aug}}_{N}(\Dor_L)\{2\}
  \simeq C^*_{\mathrm{cell}}(M_N;\Z(L)).
\]
The two-face description is especially explicit in the dual cubical
decomposition: its cells are indexed by stable trees, and the two faces
associated with an internal edge are the contractions without or with
a flip of the child subtree
\cite[Remark~5 and Theorem~3.8]{LevinsonLiu2025}.  The two faces are
weighted separately by the corresponding $\chi_L$-transport before
they are added.
Thus the twisted stable-tree complex is the braid-arrangement
specialization of the general coefficient system $\Dor_L$, rather than
the definition of that system.

\begin{proposition}
\label{hennig-local-system-criterion}
Let
\[
  c_{N,i}=\operatorname{rank}\operatorname{CH}^i(X_N)
  =\operatorname{rank}H^{2i}(X_N(\C);\Z).
\]
For a fixed line-bundle class $L\in\operatorname{Pic}(X_N)/2$, the
three-part decomposition of
\cite[Conjecture~3.2.2]{Hennig2026CellularM0n} holds if and only if
\begin{equation}
  H^i(M_N;\Z(L))
  \cong
  \Z^{\alpha_{N,i}(L)}
  \oplus
  (\Z/2)^{\gamma_{N,i}(L)}
  \label{eq:hennig-local-system-form}
\end{equation}
for every $i$; in particular, these groups must have neither odd
torsion nor $2$-primary torsion of exponent greater than $2$.  When
\eqref{eq:hennig-local-system-form} holds, the third multiplicity is
\begin{equation}
  \beta_{N,i}(L)
  =c_{N,i}-\alpha_{N,i}(L)-\gamma_{N,i}(L).
  \label{eq:hennig-beta-from-real}
\end{equation}

Under the Borel--Haefliger identification
$H^*(M_N;\F_2)=\Xi_N$, the first differential of the integral
Bockstein spectral sequence for $\Z(L)$ is
\begin{equation}
  d_1=\beta_{w_L}:=\beta+w_L\smile(-).
  \label{eq:twisted-bockstein}
\end{equation}
Consequently, the $2$-primary part of
\eqref{eq:hennig-local-system-form} is equivalent to collapse at the
$E_2$-page, or degreewise to
\begin{equation}
  \dim_{\F_2}H^i(\Xi_N,\beta_{w_L})
  =\dim_{\Q}H^i(M_N;\Q(L)).
  \label{eq:twisted-bockstein-rank}
\end{equation}
Thus the general form of Hennig's conjecture is reduced to an integral
rank-one local-system problem on the real moduli space.
\end{proposition}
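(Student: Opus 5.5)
The plan is to compare the twisted Milnor--Witt cochain complex of Theorem~\ref{twisted-cellular-comparison} with its real realization, working summand by summand after a noncanonical splitting.

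\textbf{Step 1: split the integral model.} Write $C=C\mathcal N^{\mathrm{aug}}_{N}(\Dor_L)\{2\}$ for the real realization. It is a complex of free abelian groups whose differentials are the realizations of the entries of $C\mathcal N^{\mathrm{aug}}_{q}(\G,\Dor_L)\{\eta\}$. Each entry is $0$ or $\eta$ times a unit $\langle u\rangle$, and these realize to $0$ or $\pm2$ (times signs). Consider the underlying integral complex before the twist, obtained by replacing $\eta$ by $1$. Its total rank in degree $i$ is $c_{N,i}$, since after complex realization all differentials vanish and it computes $H^{2i}(X_N(\C))$.

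\textbf{Step 2: translate the conjecture into a shape condition.} Hennig's three-part decomposition says the Milnor--Witt complex is, noncanonically, a sum of three kinds of summands:
\begin{itemize}
\item free terms $\KMW$ (multiplicity $\alpha$),
\item $\eta$-cones whose kernel lands in degree $i$ (multiplicity $\beta$),
\item $\eta$-cones whose cokernel lands in degree $i$ (multiplicity $\gamma$).
\end{itemize}
This is exactly the statement that the integral "coefficient matrix" complex splits into rank-one pieces with differential $0$ or multiplication by $\eta$. I would prove the equivalence in both directions.

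($\Rightarrow$) Real realization sends $\KMW$ to $\Z$ and $E_j$ to $[\Z\xrightarrow{2}\Z]$. Taking cohomology gives \eqref{eq:hennig-local-system-form}, and counting ranks gives \eqref{eq:hennig-beta-from-real}.

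($\Leftarrow$) Use Smith normal form of the realized complex. Its elementary divisors are restricted to $1$, $2$, or $0$ exactly when the cohomology has no odd torsion and no $\Z/4$ or higher. A divisor $1$ must come from an entry that is a unit before realization. Since every Milnor--Witt entry is $0$ or $\eta\langle u\rangle$, a realized unit can arise only from combinations of $\eta\langle u\rangle$ entries. The delicate point is lifting the Smith normal form change of basis to a $\GW$-linear change of basis, and this is the step I expect to be the main obstacle.

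To handle it, I would work over $\KMW_0=\GW(k)$ modulo the relation $\eta h=0$. Entries of the form $\eta\cdot n_\epsilon$ with $n$ even vanish, and those with $n$ odd equal $\eta$. So the Milnor--Witt matrix reduces to an $\F_2$-matrix times $\eta$ plus "hyperbolic" integer parts. I would then compare with the realized matrix mod $4$ to show that elementary divisors $1$ and $2$ lift to splittings into $\KMW[i]$-shifts and $E_j$'s. If a full lift is unavailable, the fallback is to argue at the level of cohomology through the contraction adjunction, as in Corollary~\ref{mw-cohomology-moduli}.

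\textbf{Step 3: the Bockstein statement.} First establish \eqref{eq:twisted-bockstein}. Reduction mod $2$ of $\Z(L)$ is the constant $\F_2$, and the Bockstein of the sequence $0\to\Z(L)\xrightarrow2\Z(L)\to\F_2\to0$ is $\beta+w_L\smile$. This is standard for twisted coefficients and can be checked on cochains using the cocycle $w_L$.

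Then use the usual Bockstein spectral sequence argument. Collapse at $E_2$ is equivalent to all $2$-primary torsion having exponent $2$. $E_\infty$ has $\F_2$-dimension equal to the rational rank, which yields \eqref{eq:twisted-bockstein-rank}. Odd torsion is handled separately, since it is absent exactly when $H^*(M_N;\Z(L))\otimes\F_p$ has rank equal to the rational rank.
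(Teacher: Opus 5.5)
Your outline matches the paper's. Both use the elementary-divisor decomposition of the realized twisted cellular complex and identify its $d=2$ blocks with $\eta$-cones. Both get the twisted Bockstein $\beta+w_L\smile(-)$ by comparing integral lifts of mod-$2$ cocycles, and both use the standard fact that the Bockstein spectral sequence collapses at $E_2$ exactly when all $2$-primary torsion has exponent $2$. Step~3 is fine, and so is $(\Rightarrow)$ if Hennig's conjecture is read, as in the paper, as a chain-level splitting over $\R$.

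\textbf{The gap.} The gap is the $(\Leftarrow)$ lift. You flag it but do not carry it out, and your sketch misdiagnoses the difficulty. First, every entry of the evaluated twisted differential is an integer combination of terms $\eta\langle u\rangle$, and these realize to even integers. So the realized complex $C\mathcal N^{\mathrm{aug}}_N(\Dor_L)\{2\}$ never has an elementary divisor $1$. A ``realized unit arising from combinations of $\eta\langle u\rangle$ entries'' cannot occur. Second, your proposed reduction to ``an $\F_2$-matrix times $\eta$'' is wrong over $\R$. There $\KMW_{-1}(\R)=\mathbf W(\R)\cong\Z$, so $2\eta\neq0$. Reducing modulo $2$ discards exactly the entries that would produce $\Z/4$ after realization.

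\textbf{The missing observation.} Over $\R$ there is no $\operatorname{GW}$-linear lifting problem at all. Since $(1+\langle-1\rangle)\eta=0$, one has $\eta\langle u\rangle=\operatorname{sgn}(u)\,\eta$. Hence the evaluated complex has differential $\eta A_0$ with $A_0$ an honest integer matrix. This is your Step~1 complex with $\eta$ replaced by $1$, taken with the twist. Moreover $A_0^2=0$ because $\KMW_{-2}(\R)=\Z\eta^2$, and the real realization is $2A_0$. Therefore \eqref{eq:hennig-local-system-form} holds if and only if every nonzero elementary divisor of $A_0$ equals $1$.

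\textbf{Completing the step.} Choose a degreewise $\mathrm{GL}(\Z)$-change of basis putting $(C,A_0)$ in elementary form. It acts directly on the free $\KMW$-modules and splits $\eta A_0$ into free summands and blocks $[\KMW_j\xrightarrow{d\eta}\KMW_{j-1}]$:
\begin{itemize}
\item $d=1$ gives the $\eta$-cones $E_j$;
\item odd $d>1$ realizes to odd torsion;
\item even $d$ realizes to torsion of order divisible by $4$.
\end{itemize}
This is the content behind the paper's one-line assertion that the $d=2$ blocks ``lift to the two-term $\eta$-cones''. With it in place of your mod-$2$/mod-$4$ sketch, your argument is complete and agrees with the paper's.
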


\begin{proof}
The cellular complex with coefficients in $\Z(L)$ is a bounded complex
of finite free abelian groups.  The elementary-divisor decomposition
for such a complex expresses it, up to integral chain homotopy, as a
sum of isolated copies of $\Z$ and two-term complexes
\[
  [\Z\xrightarrow{d}\Z],\qquad d\geq1.
\]
Under the coefficientwise Milnor--Witt comparison, the blocks with
$d=1$ are contractible and those with $d=2$ lift to the two-term
$\eta$-cones.  Hence precisely the blocks $d=1,2$ give Hennig's three
allowed summands; a block divisible by an odd prime or with $4\mid d$ gives,
respectively, odd torsion or higher $2$-primary torsion in
$H^*(M_N;\Z(L))$.  This proves the first assertion and
\eqref{eq:hennig-beta-from-real}.

Modulo $2$, every sign local system becomes trivial.  If $z$ is a
mod-$2$ cocycle and $\widetilde z$ is an integral lift, define integral
cochains $b_L(\widetilde z)$ and $b(\widetilde z)$ by
\[
  d_L\widetilde z=2b_L(\widetilde z),
  \qquad
  d\widetilde z=2b(\widetilde z).
\]
Comparison of the twisted and untwisted cellular coboundaries gives
\[
  [\overline{b_L(\widetilde z)}]
  =
  [\overline{b(\widetilde z)}]+w_L\smile[z],
\]
where the bar denotes reduction modulo $2$.
This proves \eqref{eq:twisted-bockstein}.  Higher differentials in the
Bockstein spectral sequence detect cyclic summands of order at least
$4$, while its limiting dimension is the rational rank.  This proves
\eqref{eq:twisted-bockstein-rank} and the final assertion.
\end{proof}

The criterion gives the following cases without requiring a general
analysis of the twisted complex.

\begin{corollary}
\label{hennig-proved-cases}
Hennig's conjectural decomposition holds in each of the following
cases.
\begin{enumerate}
\item $L$ is trivial in $\operatorname{Pic}(X_N)/2$, for arbitrary
  $N$.
\item $L$ is the root-orientation class, for arbitrary $N$.  More
  precisely, if
  $\rho:X_N\to\PP^{N-3}$ is the root projection in the wonderful-model
  construction, then this is the class represented by
  $L_{\mathrm{root}}=\rho^*\mathcal O(-1)$; its unit-sphere bundle is
  the double cover used in \cite[Theorem~8.1]{Rains_2010}, and
  $w_1(L_{\mathrm{root}})=\Pi_{[N-1]}$ under the
  Borel--Haefliger identification.
\item $N\leq5$, for every $L\in\operatorname{Pic}(X_N)/2$.
\end{enumerate}
\end{corollary}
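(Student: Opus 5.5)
In each case I will verify the criterion of Proposition \ref{hennig-local-system-criterion}. That means showing that $H^*(M_N;\Z(L))$ has no odd torsion and no $2$-primary torsion of exponent greater than $2$. Equivalently, the twisted Bockstein spectral sequence with $d_1=\beta_{w_L}$ must collapse at $E_2$, which is the rank identity \eqref{eq:twisted-bockstein-rank}. The three cases need different inputs, so I treat them separately.

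\textbf{Case (1).} No real-cohomology input is needed here; the argument is already complete. Corollary \ref{mw-cohomology-moduli} gives \eqref{eq:all-mw-cohomology} directly from the global splitting \eqref{eq:global-splitting}, with multiplicities \eqref{eq:hennig-multiplicities}. By Theorem \ref{twisted-cellular-comparison}, a class trivial in $\operatorname{Pic}(X_N)/2$ has the same complex as $\mathcal O$, so the statement depends only on the class modulo $2$. The only point to record is why \eqref{eq:global-splitting} is legitimate: each nonodd partition summand is bounded, acyclic and free, so it splits into unit-determinant pairs, which become the cones $E_j$ after the $\eta$-twist. Each odd summand has free homology, so its complex splits into its homology plus contractible pairs. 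No block with $d\geq 3$ ever appears.

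\textbf{Case (2).} For $L_{\mathrm{root}}=\rho^*\mathcal O(-1)$, the local system $\Z(L_{\mathrm{root}})$ is the sign part of the pushforward of $\Z$ along the double cover $\widetilde M_N\to M_N$ given by the unit-sphere bundle. Thus $H^*(M_N;\Z(L))$ is a summand of $H^*(\widetilde M_N;\Z)$ after inverting $2$, and integrally it fits into the transfer sequence. Rains computes the integral homology of this cover in \cite[Theorem~8.1]{Rains_2010}, using the same forest complex. I plan to bring that computation under the present framework by replacing $\PP$ with the sphere context $\bS$ of the earlier remark. In the sphere model the blow-up attaching class in each normal direction becomes the real attaching class with the twisted parity $\delta^{\ell}$, $\ell=1$, at the root node only. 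The resulting chain complex is again a sum over root-graded nested-set complexes. In it, the twisted root factor $\Dor(V;1)$ has the complementary parity, and the lower nodes are untwisted.

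The same splitting argument as in Case (1) should then apply: the twisted root factor is itself acyclic or has free homology according to the parity of $\dim V$, and only blocks $d\in\{1,2\}$ arise. I expect the main obstacle to be matching the face transports $\langle u^L\rangle$ in Definition \ref{twisted-derived-orientation} with Rains' orientation double cover, so as to check that only the root direction is twisted. My first attempt will be to compute $\ell_{F,G}(L_{\mathrm{root}})$ as the degree of $\rho^*\mathcal O(-1)$ on a line in $\PP(W_{F,G})$. That degree is $-1$ when $G$ is the root and $0$ otherwise, because $\rho$ contracts the non-root directions.

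\textbf{Case (3).} For $N\leq 4$, $X_N$ is a point or $\PP^1$, and the claim follows from the projective computation in the base case of Theorem \ref{twisted-cellular-comparison}. For $N=5$, $M_5$ is a nonorientable surface, namely $\#5\R\PP^2$. For any rank-one local system on a closed connected surface, $H^0$ is $\Z$ or $0$, $H^1$ has torsion at most one copy of $\Z/2$, and $H^2$ is $\Z$ or $\Z/2$ according to whether $w_L=w_1(M_5)$. So the torsion has exponent at most $2$, which verifies \eqref{eq:hennig-local-system-form} for every $L$.
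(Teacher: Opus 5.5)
Your cases (1) and (3) are fine. For $N=5$, the argument via invariants, coinvariants and the universal coefficient theorem on a closed surface is a valid, more conceptual substitute for the paper's Fox-calculus complex \eqref{eq:m05-twisted-cellular}, though it does not by itself give the multiplicity table.

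The gap is in case (2). Your conclusion that ``only blocks $d\in\{1,2\}$ arise'' is exactly what must be proved, and the plan gives no argument for it.

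In case (1) the splitting did not come from the root factor. It came from the global theorem that the whole integral complex $C\mathcal N^+(\G,\Dor)$ has free homology, since it computes odd-partition interval cohomology. For $L_{\mathrm{root}}$, the twisted factor lives on $W_{F,\mathrm{root}}=\operatorname{child}_F(V^*)^\perp$, which changes with $F$, and the incidence maps mix these factors. So the total complex is not a tensor product of $\Dor(V;1)$ with an untwisted complex. Knowing that each factor is acyclic or free therefore excludes neither odd torsion nor summands $\Z/4,\Z/8,\dots$ in $H^*(M_N;\Z(L_{\mathrm{root}}))$. This remains true even if you compute every $\ell_{F,G}(L_{\mathrm{root}})$ and every face transport.

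The premise that the twisted complex is again a direct sum of root-graded complexes is also unsupported. That grading comes from the idempotents $\pi_A^{[-1]}\pi_A$, which do not preserve $L_{\mathrm{root}}$: $\rho$ is constant on their images for $A\neq V^*$. Concretely, $w_1(L_{\mathrm{root}})\neq0$ for $N\geq4$, so $H^0(M_N;\Z(L_{\mathrm{root}}))=0$ and the degree-zero term is attached by a nonzero differential.

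The double-cover route does not repair this. The transfer splits off $H^*(M_N;\Z(L))$ only after inverting $2$. The integral transfer sequence also cannot fix the $2$-primary exponents without separate knowledge of the $2$-torsion of the cover.

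Two global inputs are missing, and the paper uses both.
\begin{enumerate}
\item For odd torsion, you need Rains' twisted calculation. It identifies $H^*(M_N;\Z(L_{\mathrm{root}}))$ modulo its $2$-primary torsion with the homology of the corresponding $2$-divisible partition poset, which is free for the braid arrangement. Your transfer remark together with \cite[Theorem~8.1]{Rains_2010} could supply this half if you state that conclusion explicitly.
\item For the $2$-primary part, you need the rank identity \eqref{eq:twisted-bockstein-rank} with $w_L=\Pi_{[N-1]}$. That is, $H^*(\Xi_N,\beta+\Pi_{[N-1]}\smile(-))$ must have the dimension prescribed by Rains' free part, which is \cite[Corollary~5.9]{EHKR2010}. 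This is a genuine computation in $\Xi_N$, not a formal consequence of nested-set bookkeeping.
\end{enumerate}
With both facts, Proposition~\ref{hennig-local-system-criterion} gives case (2) directly. Neither the sphere-context model nor the face-transport matching is needed.
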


\begin{proof}
The first case is Corollary \ref{mw-cohomology-moduli}.  For the second,
Rains' twisted calculation identifies the integral local-coefficient
homology modulo its $2$-primary torsion with the homology of the
corresponding $2$-divisible partition poset; for the braid arrangement
these groups are free.  This excludes odd torsion.  The twisted Bockstein
calculation of \cite[Corollary~5.9]{EHKR2010} is
$\beta+\Pi_{[N-1]}\smile(-)$ and has the dimension prescribed by
Rains' free part.  Proposition \ref{hennig-local-system-criterion}
therefore excludes higher $2$-primary torsion and proves the claim.

For $N=3$ the moduli space is a point.  For $N=4$ its real locus is a
circle, and the two possible local systems have cellular coboundary
$0$ or multiplication by $2$.  It remains to treat $N=5$.  The standard
identification
\[
  M_5\cong\#^5\mathbb{RP}^2
\]
gives a presentation
\[
  \pi_1(M_5)=
  \langle a_1,\ldots,a_5\mid a_1^2\cdots a_5^2=1\rangle.
\]
Put $\epsilon_j=\chi_L(a_j)$.  Fox differentiation gives the complete
twisted cellular cochain complex
\begin{equation}
  \Z\xrightarrow{\delta^0}\Z^5\xrightarrow{\delta^1}\Z,
  \qquad
  \delta^0(1)=(\epsilon_j-1)_{j=1}^5,
  \quad
  \delta^1(x)=\sum_{j=1}^5(1+\epsilon_j)x_j.
  \label{eq:m05-twisted-cellular}
\end{equation}
The nonzero coordinates of $\delta^0$ and $\delta^1$ are disjoint.
Unimodular changes of basis therefore reduce every nonzero block to
$[2]$.  Thus, writing $\operatorname{or}$ for the orientation character
of $M_5$,
\begin{equation}
\begin{array}{c|ccc}
\chi_L&H^0(M_5;\Z(L))&H^1(M_5;\Z(L))&H^2(M_5;\Z(L))\\ \hline
1&\Z&\Z^4&\Z/2\\
\operatorname{or}&0&\Z^4\oplus\Z/2&\Z\\
\chi_L\ne1,\operatorname{or}&0&\Z^3\oplus\Z/2&\Z/2.
\end{array}
\label{eq:m05-local-system-cohomology}
\end{equation}
There is no odd or higher $2$-primary torsion, so Proposition
\ref{hennig-local-system-criterion} applies to all
$2^5$ line-bundle parity classes.
\end{proof}

For completeness, the multiplicity triples
$(\alpha_{5,i},\beta_{5,i},\gamma_{5,i})$ obtained from
\eqref{eq:m05-local-system-cohomology} are
\begin{equation}
\begin{array}{c|ccc}
\chi_L&i=0&i=1&i=2\\ \hline
1&(1,0,0)&(4,1,0)&(0,0,1)\\
\operatorname{or}&(0,1,0)&(4,0,1)&(1,0,0)\\
\chi_L\ne1,\operatorname{or}&(0,1,0)&(3,1,1)&(0,0,1).
\end{array}
\label{eq:m05-hennig-all-twists}
\end{equation}

\begin{remark}
The unresolved case begins with arbitrary twists for $N\geq6$.  If
$w=\sum_T e_T\Pi_T$, filter the partition-graded algebra $\Xi_N$ by
$N-1-|\pi|$.  The associated spectral sequence has first page
\[
  E_1^{r,*}
  =
  \bigoplus_{\substack{\pi\\N-1-|\pi|=r}}
  \bigotimes_{B\in\pi}
  H^*\bigl(\Xi(B),\beta+w|_B\smile(-)\bigr).
\]
Terms $\Pi_T$ meeting several blocks produce the later block-merging
differentials.  Hence the remaining problem is not merely the
Cohen--Macaulayness of a naively selected subposet: one must prove both
odd-prime saturation of the integral local-system complex and collapse
of its twisted Bockstein spectral sequence.  The untwisted case and the
root-orientation case are precisely the two specializations in which
the Etingof--Henriques--Kamnitzer--Rains filtration reduces to ordinary
partition homology.
\end{remark}

Taking $q=i$ in \eqref{eq:all-mw-cohomology} gives the additive
Chow--Witt groups.  The degree-zero summand remains canonical, whereas
the positive-degree decomposition depends on the splitting in
\eqref{eq:global-splitting}.

\begin{proposition}\label{chow-witt-moduli-groups}
For $X_N=\overline{\mathcal M}_{0,N,k}$ one has
\begin{equation}
  \widetilde{\operatorname{CH}}^0(X_N)
  =H^0_{\mathrm{Nis}}(X_N,\KMW_0)
  \cong\operatorname{GW}(k).
  \label{eq:chow-witt-degree-zero}
\end{equation}
For every $q>0$, the splitting \eqref{eq:global-splitting} gives a
noncanonical additive decomposition
\begin{equation}
\begin{split}
  \widetilde{\operatorname{CH}}^q(X_N)
  &=H^q_{\mathrm{Nis}}(X_N,\KMW_q)\\
  &\cong
  \operatorname{GW}(k)^{b_q}
  \oplus\mathbf Z^{r_q}
  \oplus(\mathbf Z h)^{r_{q+1}},
  \qquad h=1+\langle-1\rangle.
\end{split}
\label{eq:chow-witt-additive}
\end{equation}
Moreover,
\begin{equation}
  \sum_qb_qt^q
  =
  \prod_{0\leq a<(N-3)/2}
  \bigl(1+(N-3-2a)^2t\bigr).
  \label{eq:odd-poincare-product}
\end{equation}
\end{proposition}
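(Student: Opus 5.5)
The first two assertions are the diagonal case $q=i$ of Corollary \ref{mw-cohomology-moduli}, so the work is to evaluate the three coefficient groups there and to compute the odd Poincaré polynomial. For \eqref{eq:chow-witt-degree-zero}, take $i=q=0$ in \eqref{eq:all-mw-cohomology}: since $b_0=1$ and $r_0=r_1=0$, the only summand is $\KMW_0(k)=\operatorname{GW}(k)$. This summand is canonical, because it is the augmentation summand $\mathbf Z$ of \eqref{eq:cellular-augmentation-splitting}, which does not depend on the chosen splitting.

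For $q>0$, set $i=q$ in \eqref{eq:all-mw-cohomology}; every coefficient is then in Milnor--Witt degree $0$. The free part contributes $\KMW_0(k)^{b_q}=\operatorname{GW}(k)^{b_q}$. For the cokernel, $\KMW_0(k)/\eta\KMW_1(k)$ is $\Z$ via the rank map, since $\operatorname{GW}(k)/\eta\KMW_1(k)=\operatorname{GW}(k)/I(k)\cong\Z$; this gives $\Z^{r_q}$. For the kernel, I will show that ${}_\eta\KMW_0(k)=\ker(\eta:\operatorname{GW}(k)\to W(k))$ is $\Z h$, using $\KMW_{-1}(k)=W(k)$ with $\eta$ acting as the quotient map $\operatorname{GW}\to W=\operatorname{GW}/\Z h$. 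Its kernel is $\Z h$, which is free because the rank of $h$ is $2$. This gives $(\Z h)^{r_{q+1}}$. The decomposition is noncanonical only because \eqref{eq:global-splitting} is.

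For \eqref{eq:odd-poincare-product}, use $m=N-1$ and the definition $b_i=\sum_{\pi\text{ odd}}\operatorname{rank}H^{m-|\pi|-i}([\hat0,\pi]_{\mathrm{odd}})$. The plan has three steps.
\begin{enumerate}
\item For an odd partition $\pi$ with blocks $B$, the interval $[\hat0,\pi]_{\mathrm{odd}}$ is a product of the odd-partition lattices $\Pi^{\mathrm{odd}}_{|B|}$ of the blocks.
\item I will use that $\Pi^{\mathrm{odd}}_{2j+1}$ (with top adjoined) is Cohen--Macaulay, with top reduced cohomology of rank $|\mu|$. The classical value (Björner, Wachs, and the Rains/EHKR computation of $H_*(\overline{\mathcal M}_{0,n}(\R);\Q)$) is the tangent number $E_{2j-1}$ up to sign.
\item Since cohomology of a product of bounded posets is the join, which is concentrated in one degree, each odd $\pi$ contributes to exactly one degree. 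A degree count should show this degree is $i=(m-|\pi|)/2$, half the number of elements merged.
\end{enumerate}
Then $P^{\mathrm{odd}}_N(t)=\sum_{\pi\text{ odd}}\prod_B E_{|B|-2}\,t^{(|B|-1)/2}$. This is an exponential-formula sum, so its generating function is $\exp$ of the odd tangent series. The claimed product should follow by identifying it with the known rational Poincaré polynomial of $M_N=\overline{\mathcal M}_{0,N}(\R)$, namely $\prod_{0\le a<(N-3)/2}(1+(N-3-2a)^2t)$ from \cite{EHKR2010}. By Corollary \ref{real-refinement-rains} and \eqref{eq:eta-inverted}, the $b_i$ are exactly the rational Betti numbers of $M_N$, so the product formula can be quoted after that identification.

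The main obstacle is the last step: pinning down that the $b_i$ equal the rational Betti numbers of $M_N$ in the correct grading. This depends on matching the cohomological degree $\dim A-i$ with the real degree, since \eqref{eq:eta-inverted} mixes homology and cohomology indices. The fallback is a direct check against the computed cases $P^{\mathrm{odd}}_5=1+4t$ and $P^{\mathrm{odd}}_6=1+10t+9t^2$, which match $(1+4t)$ and $(1+9t)(1+t)$, followed by the general EHKR formula.
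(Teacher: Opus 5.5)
Your treatment of the first two assertions is the paper's proof. You specialize Corollary~\ref{mw-cohomology-moduli} to $i=q$ and evaluate the three coefficient groups: $\KMW_0(k)=\operatorname{GW}(k)$, its $\eta$-cokernel $\operatorname{GW}(k)/I(k)\cong\mathbf Z$, and its $\eta$-kernel $\mathbf Z h$.

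For \eqref{eq:odd-poincare-product} the paper simply cites \cite[Theorem~2.4]{EHKR2010}. This tacitly uses the identification you make explicit: via \eqref{eq:eta-inverted} and Rains' theorem, $b_i=\dim_{\Q}H_i(M_N;\Q)$. The grading issue you call the main obstacle is harmless. A cellular generator in degree $i$ realizes to a real $i$-cell, and rational homology and cohomology have equal ranks, so $\sum_i b_it^i$ is the rational Poincar\'e polynomial of $M_N$.

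Your optional combinatorial route (steps~1--3) contains a false claim and should be dropped or corrected. The top interval cohomology of $\Pi^{\mathrm{odd}}_{2j+1}$ has rank $|\mu|=((2j-1)!!)^2$, not a tangent number. The M\"obius numbers of the odd partition lattices have exponential generating function $\operatorname{arcsinh}$, the compositional inverse of $\sinh$. Already for a block of size $5$ the contribution is $9$, not $E_3=2$. So your displayed sum $\sum_{\pi}\prod_B E_{|B|-2}\,t^{(|B|-1)/2}$ would give $1+10t+2t^2$ for $N=6$, contradicting the value $1+10t+9t^2$ you use as a check. With the corrected coefficients the sum does reproduce the product; for example $N=8$ gives $1+35t+259t^2+225t^3$. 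Since your final argument only uses the citation after the identification $b_i=\dim_{\Q}H_i(M_N;\Q)$, the proof stands once this detour is removed.
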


\begin{proof}
Set $i=q$ in Corollary \ref{mw-cohomology-moduli}.  The three
coefficient groups in \eqref{eq:all-mw-cohomology} become
\begin{equation}
  \KMW_0(k)=\operatorname{GW}(k),
  \qquad
  \KMW_0(k)/\eta\cong\mathbf Z,
  \qquad
  {}_{\eta}\KMW_0(k)=\mathbf Z h,
  \label{eq:degree-zero-mw-pieces}
\end{equation}
where the last two identifications are, explicitly,
\[
  \operatorname{coker}\bigl(
    \eta:\KMW_1(k)\to\operatorname{GW}(k)
  \bigr)\cong\mathbf Z
\]
and
\[
  \ker\bigl(
    \eta:\operatorname{GW}(k)\to\KMW_{-1}(k)
  \bigr)=\mathbf Z h.
\]
This proves \eqref{eq:chow-witt-degree-zero} and
\eqref{eq:chow-witt-additive}.
Formula \eqref{eq:odd-poincare-product} is
\cite[Theorem 2.4]{EHKR2010}.
\end{proof}

The proposition determines the additive groups but does not identify
their products.  We first isolate the two comparison statements needed
to recover the multiplication over $\R$.

\begin{lemma}\label{chow-witt-pullback-lemma}
Let $X$ be a smooth $k$-scheme such that every $\operatorname{CH}^q(X)$
has no $2$-torsion.  Then the Milnor--Witt pullback square induces a
canonical isomorphism of graded rings
\begin{equation}
  \widetilde{\operatorname{CH}}^*(X)
  \xrightarrow{\ \simeq\ }
  \operatorname{CH}^*(X)
  \mathop{\times}_{\operatorname{Ch}^*(X)}
  \bigoplus_q H^q(X,\mathbf I^q),
  \label{eq:abstract-chow-witt-pullback}
\end{equation}
where $\operatorname{Ch}^*(X)=\operatorname{CH}^*(X)/2$.
\end{lemma}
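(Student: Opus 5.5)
The plan is to use the fiber-product sheaf square from the proof of Corollary \ref{total-cellular} and then control the resulting long exact sequences by the hypothesis that $\operatorname{CH}^q(X)$ has no $2$-torsion. The square
\[
\begin{tikzcd}
  \KMW_q \arrow[r] \arrow[d] & \mathrm K_q^{\mathrm M} \arrow[d]\\
  \mathbf I^q \arrow[r] & \mathrm K_q^{\mathrm M}/2
\end{tikzcd}
\]
is cartesian, and both maps to $\mathrm K^{\mathrm M}_q/2$ are surjective. Hence we get a short exact sequence of strictly $\A^1$-invariant sheaves
\[
  0\to\KMW_q\to\mathrm K_q^{\mathrm M}\oplus\mathbf I^q\to\mathrm K_q^{\mathrm M}/2\to0 .
\]
Taking Nisnevich cohomology in degree $q$ gives an exact Mayer--Vietoris segment
\[
  H^{q-1}(X,\mathrm K^{\mathrm M}_q)\oplus H^{q-1}(X,\mathbf I^q)\to H^{q-1}(X,\mathrm K^{\mathrm M}_q/2)\xrightarrow{\partial}\widetilde{\operatorname{CH}}^q(X)\to\operatorname{CH}^q(X)\oplus H^q(X,\mathbf I^q)\to H^q(X,\mathrm K^{\mathrm M}_q/2).
\]
This uses Rost's identification $H^q(X,\mathrm K^{\mathrm M}_q)=\operatorname{CH}^q(X)$ and $H^q(X,\mathrm K^{\mathrm M}_q/2)=\operatorname{Ch}^q(X)$, where the latter holds because $H^{q+1}$ of the Gersten complex vanishes in top degree.

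The map $\operatorname{CH}^q(X)\oplus H^q(X,\mathbf I^q)\to\operatorname{Ch}^q(X)$ is the difference of reduction modulo $2$ and the map $\mathbf I^q\to\mathbf I^q/\mathbf I^{q+1}\cong\mathrm K^{\mathrm M}_q/2$. Its kernel is therefore exactly the fiber product in \eqref{eq:abstract-chow-witt-pullback}. So the map is surjective onto the fiber product, and it remains to show it is injective, i.e.\ that $\partial=0$.

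For injectivity I would show that $H^{q-1}(X,\mathrm K^{\mathrm M}_q)\to H^{q-1}(X,\mathrm K^{\mathrm M}_q/2)$ is already surjective. The short exact sequence $0\to 2\mathrm K^{\mathrm M}_q\to\mathrm K^{\mathrm M}_q\to\mathrm K^{\mathrm M}_q/2\to0$ alone is not enough, so I would use Gersten complexes directly. In the Rost cycle complex, the cokernel of $H^{q-1}(X,\mathrm K^{\mathrm M}_q)\to H^{q-1}(X,\mathrm K^{\mathrm M}_q/2)$ injects, via the Bockstein for multiplication by $2$ on the cycle complex $C^*(X,\mathrm K^{\mathrm M}_q)$, into the $2$-torsion of $H^q(X,\mathrm K^{\mathrm M}_q)=\operatorname{CH}^q(X)$. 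To justify this, consider the degree $q-1,q$ part of the Gersten complex. The sequence $0\to C\xrightarrow{2}C\to C/2\to0$ is exact in top degree $q$, where $C^q=\bigoplus\Z$ is torsion-free. In degree $q-1$, Milnor $K_1$ of residue fields may have $2$-torsion (namely $-1$), so I would check that the long exact sequence still yields
\[
  H^{q-1}(C)\to H^{q-1}(C/2)\to{}_2H^q(C).
\]
This holds if multiplication by $2$ on $C^{q-1}=\bigoplus k(x)^\times$ has image the kernel of the projection to $C^{q-1}/2$, which is true, with kernel $\mu_2$ that only affects lower terms. Since ${}_2\operatorname{CH}^q(X)=0$ by hypothesis, $\partial=0$, and hence the map is bijective.

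Multiplicativity is formal. All three maps $\KMW_*\to\mathrm K^{\mathrm M}_*$, $\KMW_*\to\mathbf I^*$ and $\mathrm K^{\mathrm M}_*\to\mathrm K^{\mathrm M}_*/2$ are morphisms of sheaves of graded rings, and the cup products on Nisnevich cohomology are natural. Hence $\widetilde{\operatorname{CH}}^*(X)\to\operatorname{CH}^*(X)\times H^*(X,\mathbf I^*)$ is a ring map landing in the fiber product, which is a subring. The main obstacle I expect is the Bockstein step in degree $q-1$, namely handling the torsion in $C^{q-1}$ carefully. If it fails, I would instead compare through the sequence $0\to\mathbf I^{q+1}\to\KMW_q\to\mathrm K^{\mathrm M}_q\to0$ and the analogous top-degree exactness.
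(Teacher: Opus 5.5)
Your proposal is correct and follows the paper's proof essentially step for step. Both arguments use the short exact sequence $0\to\KMW_q\to\mathrm K_q^{\mathrm M}\oplus\mathbf I^q\to\mathrm K_q^{\mathrm M}/2\to0$ and obtain surjectivity onto the fiber product from $H^q(X,\mathrm K^{\mathrm M}_q/2)=\operatorname{Ch}^q(X)$. Injectivity comes from surjectivity of $H^{q-1}(X,\mathrm K_q^{\mathrm M})\to H^{q-1}(X,\mathrm K_q^{\mathrm M}/2)$, via a Bockstein into ${}_2\operatorname{CH}^q(X)=0$, and multiplicativity comes from compatibility of Morel's square with products.

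Your chain-level Gersten argument for the Bockstein step is a useful sharpening of the paper's brief appeal to ``the coefficient sequence for multiplication by $2$.'' It needs only that $C^q$ is torsion-free, that $C^{q+1}=0$, and that $\ker(C^{q-1}\to C^{q-1}/2)=2C^{q-1}$. The paper's phrasing glosses over the fact that $2$ is not injective on $\mathrm K^{\mathrm M}_q$.
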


\begin{proof}
Morel's pullback square for Milnor--Witt K-theory
\cite{Morel04} gives, in every weight $q$, a short exact sequence
\[
  0\longrightarrow\KMW_q
  \longrightarrow\mathrm K_q^{\mathrm M}\oplus\mathbf I^q
  \longrightarrow\mathrm K_q^{\mathrm M}/2
  \longrightarrow0.
\]
The associated long exact sequence shows that
$H^q(X,\KMW_q)$ maps surjectively to the degree-$q$ fiber product in
\eqref{eq:abstract-chow-witt-pullback}.  Its kernel is the cokernel of
\[
  H^{q-1}(X,\mathrm K_q^{\mathrm M})
  \oplus H^{q-1}(X,\mathbf I^q)
  \longrightarrow
  H^{q-1}(X,\mathrm K_q^{\mathrm M}/2).
\]
The first summand alone maps surjectively: the coefficient sequence for
multiplication by $2$ identifies its cokernel with the $2$-torsion in
$H^q(X,\mathrm K_q^{\mathrm M})=\operatorname{CH}^q(X)$, which is zero
by hypothesis.  Thus the map is also injective.  Moreover, the Gersten
complex for $\mathrm K_q^{\mathrm M}$ vanishes above codimension $q$,
so
\[
  H^q(X,\mathrm K_q^{\mathrm M}/2)
  \cong\operatorname{CH}^q(X)/2
  =\operatorname{Ch}^q(X).
\]
Compatibility of Morel's square with products makes the degreewise
isomorphisms multiplicative.
\end{proof}

\begin{proposition}\label{i-cycle-moduli}
Let $X_N=\overline{\mathcal M}_{0,N,\R}$ and $M_N=X_N(\R)$.  The
normalized-signature cycle class map is an isomorphism of graded rings
\begin{equation}
  \bigoplus_q H^q(X_N,\mathbf I^q)
  \xrightarrow{\ \simeq\ }
  H^*(M_N;\mathbf Z).
  \label{eq:i-cycle-moduli}
\end{equation}
\end{proposition}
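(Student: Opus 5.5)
Two things have to be shown: that the normalized-signature cycle class map is an additive isomorphism, and that it is multiplicative.

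\emph{Additive isomorphism.} I will follow the argument sketched in the remark after Corollary \ref{real-refinement-rains}. The map comes from the real cycle class of Hornbostel--Wendt--Xie--Zibrowius. By \cite[Theorem~4.7 and Proposition~4.8]{Hornbostel_2021} it is compatible with localization sequences and with pushforwards along closed immersions.

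The braid building set satisfies the complete-flag hypothesis. So the forest filtration \eqref{eq:forest-filtration} of $X_N$ gives a Morel--Sawant cellular structure, by Theorem \ref{cellular-filtration}. Its strata are disjoint unions of products of hyperplane-arrangement complements, namely products of spaces $M_{0,a}$. Each such product is itself the complement of a product arrangement in an affine space. On these pieces, \cite[Proposition~7.5]{Peng2023} says that the map $H^*(-,\mathbf I^*)\to H^*((-)(\R);\Z)$ is an isomorphism.

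Both $\bigoplus_q H^q(-,\mathbf I^q)$ and $H^*((-)(\R);\Z)$ have long exact sequences for the pairs (open stage, closed stratum), using purity and Thom isomorphisms. The real cycle class map commutes with these sequences. I will then induct on the filtration stage and apply the five lemma at each step. The output is the additive isomorphism \eqref{eq:i-cycle-moduli}.

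One point needs care: the $\mathbf I^*$-cohomology of the strata has to be computed with the correct twists by the normal determinant. Corollary \ref{total-cellular} ($\eta$-image equal to $\mathbf I^i$, with orientation data $\Dor$) and Corollary \ref{real-refinement-rains} (the $\{2\}$-complex computing the cellular chains of $M_N$) give a chain-level cross-check that the ranks on both sides match.

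\emph{Ring structure.} The real cycle class map is a ring homomorphism on $\bigoplus_q H^q(X,\mathbf I^q)$ for any smooth real $X$. This should follow from the construction through the signature map $\mathbf I^q\to\Z$ and its compatibility with cup products in \cite{Hornbostel_2021}. Composing with the additive isomorphism then gives a ring isomorphism.

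\emph{Expected main obstacle.} The step I expect to be hardest is the five-lemma step. It requires the localization boundary maps, on the $\mathbf I$-side and on the topological side, to correspond under the cycle class map once the normal line bundles are taken into account. The real normal bundles of the boundary divisors need not be orientable, so the twisted groups $H^*(-,\mathbf I^*(L))$ enter the sequences. My first approach is to note that every stratum is a product of hyperplane-arrangement complements in affine space. On such a stratum every line bundle is trivial, since its Picard group vanishes as in Definition \ref{twisted-derived-orientation}. Choosing trivializations therefore reduces the twisted comparison to the untwisted one of \cite[Proposition~7.5]{Peng2023}. Compatibility of the chosen trivializations with the Thom classes on both sides would follow from \cite[Proposition~4.8]{Hornbostel_2021}.

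If this bookkeeping fails, I will fall back on a different route. Compare both sides through the integral chain model \eqref{eq:real-integral-nested-comparison}. Rank equality then follows from Corollary \ref{total-cellular}, and injectivity would be checked modulo $2$ through the Borel--Haefliger identification $H^*(M_N;\F_2)\cong \operatorname{Ch}^*(X_N)$.
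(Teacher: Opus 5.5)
\textbf{There is a genuine gap: the five-lemma step fails.} The obstacle is not the determinant twist you anticipated; that twist is harmless, since the strata have trivial Picard group.

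The diagonal $\bigoplus_q H^q(-,\mathbf I^q)$ has no localization sequence of its own. Purity gives $H^p_{X_i}(\Omega_i,\mathbf I^j)\cong H^{p-i}(X_i,\mathbf I^{j-i}(\det\nu_i))$. Since the strata are cohomologically trivial, the sequence through $H^i(\Omega_i,\mathbf I^i)$ reads
\[
  H^{i-1}(\Omega_{i-1},\mathbf I^i)\to H^0(X_i,\mathbf I^0(\det\nu_i))\to H^i(\Omega_i,\mathbf I^i)\to H^i(\Omega_{i-1},\mathbf I^i)\to 0 .
\]
This causes two problems.
\begin{enumerate}
\item The off-diagonal group $H^{i-1}(\Omega_{i-1},\mathbf I^i)$ appears, and your induction does not control it.
\item More decisively, the stratum enters through $H^0(X_i,\mathbf I^0)\cong\mathbf W(X_i)$. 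There the real cycle class map is just the signature to $H^0(X_i(\R);\Z)$, and it is not surjective.
\end{enumerate}

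Two examples show the failure of surjectivity.
\begin{itemize}
\item For $\Gm$, the image of $\mathbf W(\Gm)=\mathbf W(\R)\cdot 1\oplus\mathbf W(\R)\cdot\langle t\rangle$ in $\Z^2$ is $\{(x,y):x\equiv y\bmod 2\}$.
\item For $M_{0,4}=\A^1\setminus\{0,1\}$, the generators $1,\langle t\rangle,\langle t-1\rangle$ have signature vectors $(1,1,1)$, $(-1,1,1)$, $(-1,-1,1)$ on the three chambers. These span a sublattice of index $4$ in $\Z^3$.
\end{itemize}
The open codimension-one strata of $\overline{\mathcal M}_{0,5}$ are copies of $M_{0,4}$. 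So the five lemma already breaks for $H^1(X_5,\mathbf I^1)$. No stratumwise input from \cite{Peng2023} can repair this, because the map in question is not an isomorphism.

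Your fallback does not close the gap either. Equal ranks plus injectivity modulo $2$ do not give an isomorphism; consider multiplication by $3$ on $\Z$. Also, Borel--Haefliger identifies $H^*(M_N;\F_2)$ with $\operatorname{Ch}^*(X_N)$, not with $H^*(X_N,\mathbf I^*)\otimes\F_2$.

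\textbf{How the paper avoids this.} The remark you follow is only a sketch for general wonderful models; the proof of this proposition does not run a localization induction on the diagonal. It applies $\operatorname{Hom}(-,\mathbf I^q)$ to the split model \eqref{eq:global-splitting}, whose terms are single sheaves $\KMW_i$ in degree $i$. By the contraction formula:
\begin{itemize}
\item a free summand contributes $\mathbf I^0(\R)\cong\Z$;
\item an $\eta$-cone contributes either $\mathbf I^1(\R)=2\Z\hookrightarrow\mathbf I^0(\R)$ or the identity $\mathbf I^0(\R)\to\mathbf I^{-1}(\R)$.
\end{itemize}
This gives $H^q(X_N,\mathbf I^q)\cong\Z^{b_q}\oplus(\Z/2)^{r_q}$. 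By Corollary \ref{blowup-parity}, real realization sends the same attaching maps to $0$ and $2$. Normalized signature then matches the two cochain complexes term by term, compatibly with the differentials.

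Multiplicativity is \cite[Proposition~4.3]{Hornbostel_2021}, as in your proposal. If you want to keep a filtration argument, run it on this chain-level model rather than on the unsplit localization sequences.
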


\begin{proof}
By Theorem \ref{cellular-filtration} and
\cite[Proposition~3.1]{Peng2023}, the relative terms of the forest
filtration are finite sums of Tate objects.  Apply cellular cochains to
the resulting splitting \eqref{eq:global-splitting}.  The contraction formula
$(\mathbf I^q)_{-s}=\mathbf I^{q-s}$ shows that a free summand in
degree $q$ contributes $\mathbf I^0(\R)=\mathbf W(\R)\cong\mathbf Z$.
For an $\eta$-cone, normalized signature identifies the relevant map
with
\[
  \mathbf I^1(\R)=2\mathbf Z
  \lhook\joinrel\longrightarrow
  \mathbf I^0(\R)=\mathbf Z.
\]
The adjacent map
$\mathbf I^0(\R)\to\mathbf I^{-1}(\R)=\mathbf W(\R)$ is the identity,
so it has zero kernel.
Consequently,
\begin{equation}
  H^q(X_N,\mathbf I^q)
  \cong
  \mathbf Z^{b_q}\oplus(\mathbf Z/2)^{r_q}.
  \label{eq:i-cohomology-additive}
\end{equation}
Under real realization, Corollary \ref{blowup-parity} sends the same
attaching maps to $0$ and multiplication by $2$.  Thus normalized
signature identifies the cellular cochain complexes term by term and
commutes with their localization differentials.  It follows that the
cycle class map is the isomorphism in
\eqref{eq:i-cycle-moduli}.  Compatibility with cup products is
\cite[Proposition~4.3]{Hornbostel_2021}; this localization argument is
also the one used for linear schemes in
\cite[Section~4]{Hennig2025RealCycle}.
\end{proof}

The ordinary Chow ring now controls the mod-$2$ component, while
Proposition \ref{i-cycle-moduli} identifies the fundamental-ideal
component with integral cohomology of the real locus.  Lemma
\ref{chow-witt-pullback-lemma} glues them into the following ring.

\begin{theorem}\label{chow-witt-moduli-ring}
Let
\[
  X_N=\overline{\mathcal M}_{0,N,\R},
  \qquad M_N=X_N(\R),
  \qquad A_N=\operatorname{CH}^*(X_N),
  \qquad \Xi_N=A_N/2.
\]
Under the Borel--Haefliger isomorphism
$\Xi_N\cong H^*(M_N;\F_2)$ of
\cite[Theorem 5.6]{EHKR2010}, let
$\beta:\Xi_N^q\to\Xi_N^{q+1}$ be the integral Bockstein.  Thus $\beta$
is the degree-one derivation determined by $\beta(x)=x^2$ on
$\Xi_N^1$ \cite[Section 5.3]{EHKR2010}.  Then there is a canonical
isomorphism of graded rings
\begin{equation}
  \widetilde{\operatorname{CH}}^*(X_N)
  \cong
  A_N\mathop{\times}_{\Xi_N}H^*(M_N;\mathbf Z),
  \label{eq:chow-witt-topological-pullback}
\end{equation}
where both maps to $\Xi_N$ are reduction modulo $2$.

More explicitly, let $\Lambda_N$ be the integral skew-commutative
algebra of \cite[Definition 2.1]{EHKR2010}.  The natural map induces
an isomorphism
\[
  \rho_N:\Lambda_N/2
  \xrightarrow{\ \simeq\ }
  H^*(\Xi_N,\beta).
\]
Writing a bar for reduction modulo $2$, one obtains
\begin{equation}
  \widetilde{\operatorname{CH}}^*(X_N)
  \cong
  \left\{
    (a,\lambda)\in A_N\times\Lambda_N
    \ \middle|\
    \ \beta(\bar a)=0,
    \ [\bar a]=\rho_N(\bar\lambda)
  \right\},
  \label{eq:chow-witt-explicit-pullback}
\end{equation}
with componentwise multiplication.  In particular,
\begin{equation}
  \widetilde{\operatorname{CH}}^q(X_N)
  \cong\mathbf Z^{c_q+b_q}
  \quad\text{additively},
  \label{eq:real-chow-witt-ranks}
\end{equation}
where in degree zero the right-hand side is canonically identified with
$\operatorname{GW}(\R)$ rather than with a chosen copy of $\mathbf Z^2$.
\end{theorem}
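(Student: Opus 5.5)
The plan is to assemble the statement from Lemma \ref{chow-witt-pullback-lemma} and Proposition \ref{i-cycle-moduli}, and then to make the topological factor explicit using the results of \cite{EHKR2010}.

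\emph{Step 1: the fiber product.} The Chow ring $A_N=\operatorname{CH}^*(X_N)$ is free abelian; for instance, Keel's presentation, or the cellular splitting \eqref{eq:global-splitting} under complex realization, gives ranks $c_q$. In particular it has no $2$-torsion, so Lemma \ref{chow-witt-pullback-lemma} applies. It identifies $\widetilde{\operatorname{CH}}^*(X_N)$ with $A_N\times_{\Xi_N}\bigoplus_q H^q(X_N,\mathbf I^q)$. Next, replace the second factor by $H^*(M_N;\Z)$ using the ring isomorphism of Proposition \ref{i-cycle-moduli}. The one point to check is that the two maps to $\Xi_N$ match. Composing the signature cycle class with reduction mod $2$ should give the composite $H^q(X_N,\mathbf I^q)\to H^q(X_N,\mathbf I^q/\mathbf I^{q+1})\cong\operatorname{Ch}^q(X_N)$ followed by the Borel--Haefliger map. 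This compatibility is the standard real cycle class comparison of \cite{Hornbostel_2021}, and I would cite it together with \cite[Theorem 5.6]{EHKR2010}. This yields \eqref{eq:chow-witt-topological-pullback} canonically, with componentwise multiplication.

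\emph{Step 2: the explicit form.} I would use the Bockstein description of $H^*(M_N;\Z)$ given in \cite{EHKR2010}. There, $H^*(M_N;\Z)$ has only $2$-torsion of exponent $2$; this agrees with \eqref{eq:i-cohomology-additive}. Its free part is $\Lambda_N$, and $\rho_N:\Lambda_N/2\to H^*(\Xi_N,\beta)$ is an isomorphism. Since all torsion has order $2$, the Bockstein spectral sequence degenerates at $E_2$. Hence reduction mod $2$ embeds $H^*(M_N;\Z)$ into $\ker\beta\subseteq\Xi_N$: the torsion summands map isomorphically onto $\operatorname{im}\beta$, and the free part surjects onto $H(\Xi_N,\beta)$ via $\rho_N$. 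Therefore an integral class is determined by a pair consisting of its mod $2$ reduction $z\in\ker\beta$ and a lift $\lambda\in\Lambda_N$ of $[z]$. Making this precise requires choosing the splitting $H^*(M_N;\Z)\cong\Lambda_N\oplus\operatorname{im}\beta$ that \cite{EHKR2010} provides. With $z=\bar a$ for $a\in A_N$, the fiber product becomes the set described in \eqref{eq:chow-witt-explicit-pullback}.

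\emph{Main obstacle.} The delicate step is the multiplicativity and canonicity of this explicit description. The torsion part $\operatorname{im}\beta$ has to be absorbed into the condition $\beta(\bar a)=0$ rather than recorded in $\Lambda_N$, and the identification $H^*(M_N;\Z)\hookrightarrow \ker\beta\times_{H(\Xi_N,\beta)}\Lambda_N/2$ must be injective. I would check this by using that reduction mod $2$ is injective on $2$-torsion of exponent $2$. Since $\Lambda_N$ is torsion-free and the pair $(a,\lambda)$ recovers the integral class, this identification is a ring map.

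\emph{Step 3: ranks.} For \eqref{eq:real-chow-witt-ranks}, I would compute ranks directly from the fiber product. The rank of $\widetilde{\operatorname{CH}}^q$ equals $\operatorname{rank} A_N^q+\operatorname{rank} H^q(M_N;\Z)-\operatorname{rank}(\text{image in }\Xi_N)$, where the last term vanishes because $\Xi_N$ is finite. This gives $c_q+b_q$. Torsion-freeness follows from \eqref{eq:chow-witt-additive} over $\R$: there $\operatorname{GW}(\R)\cong\Z^2$ and $\Z h\cong\Z$, so the total rank is $2b_q+r_q+r_{q+1}=c_q+b_q$ by \eqref{eq:cone-count}. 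In degree $0$, \eqref{eq:chow-witt-degree-zero} gives $\operatorname{GW}(\R)$ canonically.
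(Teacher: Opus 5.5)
Your proposal is correct and follows essentially the paper's proof. Lemma~\ref{chow-witt-pullback-lemma}, Proposition~\ref{i-cycle-moduli} and the Borel--Haefliger isomorphism give the fiber product, the EHKR/Rains torsion facts identify $H^*(M_N;\Z)$ with $\Lambda_N\times_{H(\Xi_N,\beta)}\ker\beta$, and $2b_q+r_q+r_{q+1}=c_q+b_q$ gives the ranks; your check that the signature class reduces mod $2$ to the Borel--Haefliger class makes explicit a compatibility the paper leaves implicit.

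There are two small slips. First, reduction mod $2$ is injective only on the exponent-$2$ torsion subgroup, not on all of $H^*(M_N;\Z)$. Second, the fiber product in your ``main obstacle'' paragraph should be taken with $\Lambda_N$, not $\Lambda_N/2$. Also, no splitting is needed, since the $\Lambda_N$-coordinate is just the canonical quotient of $H^*(M_N;\Z)$ by its torsion.
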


\begin{proof}
The integral Chow groups of $X_N$ are free abelian
\cite[Section~5.1]{EHKR2010}.  Lemma
\ref{chow-witt-pullback-lemma}, Proposition \ref{i-cycle-moduli}, and
the Borel--Haefliger isomorphism therefore give
\eqref{eq:chow-witt-topological-pullback} as an isomorphism of graded
rings.

Theorems 2.10 and 5.7 of \cite{EHKR2010} identify the Bockstein
homology with $\Lambda_N/2$ and exclude $4$-torsion.  Rains'
\cite[Corollary 3.8]{Rains_2010} excludes odd torsion.  The coefficient
exact sequences consequently identify
\[
  H^*(M_N;\mathbf Z)
  \cong
  \left\{
    (\lambda,z)\in\Lambda_N\times\ker\beta
    \ \middle|\
    \ \rho_N(\bar\lambda)=[z]
  \right\}.
\]
Substitution in \eqref{eq:chow-witt-topological-pullback} proves
\eqref{eq:chow-witt-explicit-pullback}.  Finally,
$c_q=b_q+r_q+r_{q+1}$ and Proposition
\ref{chow-witt-moduli-groups} give
\eqref{eq:real-chow-witt-ranks}.
\end{proof}

Combining the ring description with the explicit values of $b_q$ and
$c_q$ from Example \ref{moduli-cellular} gives immediate low-dimensional
checks of both the degree-zero convention and the positive-degree
ranks.

For the first two nontrivial cases, the additive groups are
\[
\begin{array}{c|cccc}
 &\widetilde{\operatorname{CH}}^0
 &\widetilde{\operatorname{CH}}^1
 &\widetilde{\operatorname{CH}}^2
 &\widetilde{\operatorname{CH}}^3\\ \hline
 X_5&\operatorname{GW}(\R)&\mathbf Z^9&\mathbf Z&0\\
 X_6&\operatorname{GW}(\R)&\mathbf Z^{26}&\mathbf Z^{25}&\mathbf Z.
\end{array}
\]
Their products are the componentwise products in
\eqref{eq:chow-witt-explicit-pullback}.